\documentclass[11pt, english, a4paper]{amsart}

\usepackage{graphicx}

\graphicspath{{img/}}
\usepackage{amsmath}
\usepackage{amssymb}
\usepackage[utf8]{inputenc}
\usepackage[colorlinks=true, citecolor=black, linkcolor=black, urlcolor=blue]{hyperref}
\usepackage{bm} 
\usepackage[T1]{fontenc}
\usepackage{amsfonts}
\usepackage{mathrsfs}
\usepackage{yfonts}
\usepackage{url}
\usepackage[normalem]{ulem}
\usepackage{mathpazo}  

\usepackage{mathtools}

\usepackage{verbatim}
\usepackage{enumerate}	
\usepackage[english]{babel}
\usepackage{ebproof}		
\usepackage{quoting}
\usepackage{array,booktabs}
 \usepackage{pifont}

\usepackage[shortlabels]{enumitem}

\usepackage{todonotes}

\usepackage[pdf]{graphviz}

\usepackage[margin=2.5cm]{geometry}

\usepackage{orcidlink}

\theoremstyle{definition}
\newtheorem{definition}{Definition}
\newtheorem{example}[definition]{Example}

\newtheorem{remark}[definition]{Remark}

\theoremstyle{plain}
\newtheorem{lemma}[definition]{Lemma}
\newtheorem{proposition}[definition]{Proposition}
\newtheorem{theorem}[definition]{Theorem}
\newtheorem{corollary}[definition]{Corollary}

\makeatletter
\newcommand{\tpmod}[1]{{\@displayfalse\pmod{#1}}}
\makeatother

\newcommand\Ni[1][S]{\mathfrak{I}_{0}(#1)}
\newcommand{\Ap}{\operatorname{Ap}}

\title{Reflexive ideals of a numerical semigroup}
\author[Campochiaro]{P. Campochiaro \orcidlink{0009-0009-5787-1898}}
\address[Campochiaro]{Institute of Mathematics, Jagiellonian University, prof. Stanisława Łojasiewicza 6, 30-348 Kraków, Poland}
\email{pietro.campochiaro@doctoral.uj.edu.pl}

\author[D'Anna]{M. D'Anna \orcidlink{0000-0002-9943-5527}}
\address[D'Anna]{Dipartimento di Matematica e Informatica, Viale A. Doria, 6 - I 95125 – Catania, Italia}
\email{mdanna@dmi.unict.it}

\author[García-Sánchez]{P.~A. García-Sánchez \orcidlink{0000-0003-2330-9871}}
\address[García-Sánchez]{Departament of Algebra and IMAG, Univsersity of Granada, 18071 Granada, Spain}
\email{pedro@ugr.es}

\keywords{numerical semigroup, reflexive ideal, trace ideal}
\subjclass{20M12, 20M14}

\date{January 2025}

\begin{document}

\begin{abstract}
    We study the reflexivity property of ideals in numerical semigroups. In particular, we characterize irreducible reflexive ideals and  determine the Ap\'ery set of a reflexive ideal. Moreover, we study how reflexivity allows us to characterize some classes of numerical semigroups and the interaction of reflexivity with the property of being a trace ideal.
\end{abstract}

\maketitle

\section{Introduction}
 In commutative ring theory, a module $M$ over a ring $R$ is said to be \emph{reflexive} when the function $\varphi: M \rightarrow \operatorname{Hom}_R(\operatorname{Hom}_R(M,R),R)$, defined by $\varphi(x)(f)=f(x)$, is an isomorphism.
If $I$ is a regular fractional ideal of $R$, we have
$\operatorname{Hom}_R(I,R)\cong (R: I)$, where the colon is taken in the total ring of fractions $Q(R)$ of $R$, and therefore reflexivity can be expressed as $I=R:(R:I)$.
In this case, $I$ is also said to be \emph{divisorial}.

The interactions between reflexivity of modules and properties of the ring have been an object of research since the 1960's; recently, many papers  dealt with reflexivity for one-dimensional rings associated to curve singularities (see, for instance, \cite{D,DMS,IK24,Kob}).

A particular case of this class of rings is obtained by considering numerical semigroup rings (or, equivalently, affine coordinate rings of a monomial curve).
In this set-up, it is possible to translate at numerical level the multiplicative properties  of the rings, looking at additive properties of the corresponding semigroup.
This technique not only allows to recover more easily known results that hold for rings, but often yields original results, which in many cases can be generalized for lager classes of rings. Moreover, working at the numerical level is useful to find examples and counterexamples for any kind of pathology.

Hence, in this paper, we focus on reflexivity problems on numerical semigroups, that is, submonoids $S$ of $(\mathbb N,+)$ (where $\mathbb N$ denotes the set of non-negative integers), with finite complement in $\mathbb{N}$, and on  their relative ideals $I$,
that is, non-empty subsets of $\mathbb Z$, with a minimum and closed under addition with elements of $S$. In this context, the reflexivity of an ideal $I$ can be expressed as 
\[
I=S-(S-I),
\]
where the ``difference'' of two relative ideals is the numerical counterpart of the colon ideal
in ring theory: $H-J=\{x \in \mathbb Z\mid x+j \in H, \ \text{for all } j \in J\}$.

The main problems we address are of two different natures.
 Firstly we try to characterize reflexive ideals of a numerical semigroup, a problem that for rings is still open. To this aim we focus our attention on factorizations of a reflexive ideal
 as intersection of other reflexive ideals, determining the irreducible ones. We consider two different situations: the irreducible in the set of all reflexive ideals, that turns out to be the principal ideals, and the irreducible in the set of ideals between $S$ and $\mathbb N$, which turns out to be translations of integrally closed ideals.
 Secondly, we focus on the  interactions between reflexive ideals and the properties of the semigroup; in this case, we recover many results  obtained in the ring case, but with original proofs and new interesting examples that enlighten the theory.

More precisely, after recalling the basic definitions and properties needed for our goals, 
in Section~\ref{sed:reflexive-ideals} we start to study the divisorial closure of a relative ideal, that is, $I^v:=S-(S-I)$.
Then we prove that every reflexive ideal is a finite intersection of principal ideals (see Proposition \ref{prop:expr-I-star}), that are the only irreducible reflexive ideals (Proposition \ref{prop:p ideals}), and we discuss the possible lengths of these factorizations.
Moreover, after observing that reflexivity is independent of translations, we show that any reflexive ideal is a translation of a proper ideal sharing the conductor with $S$. We also characterize when an over-semigroup of $S$ and all its ideals are reflexive (see Theorem~\ref{thm:A-Valentina}).

If we focus on ideals sharing the conductor with $S$, we can find a necessary condition to reflexivity (see Corollary~\ref{cor:m+bar I-bar I}), which involves the notion of integrally closed ideals,  a class of proper ideals, that are all reflexive, and this is the content of Section~\ref{sec:int-closed-ideals}.

However, in order to find a characterization of reflexive ideals, we also consider the monoid of normalized ideals, which is the set of relative ideals  $I$, such that $\min(I)=0$.
In Section~\ref{sec:irred-ref-ideals} we show that every normalized reflexive ideal can be expressed as a finite intersection of normalized irreducible reflexive ideals.
Moreover, we study the nature of these decompositions, showing that they are not unique and can be of different length. In particular, the normalized irreducible reflexive ideals are exactly the translations of the integrally closed ideals of $S$ (see Theorem~\ref{th:R-irr}). 

In Section~\ref{sec:apery-sets}, we deepen our object of study using Ap\'ery sets. Firstly, we show that they allow us to determine the irreducible reflexive ideals appearing in a minimal decomposition of a reflexive ideal; then, after computing the Ap\'ery set of the difference ideal of two ideals (see Theorem~\ref{th:ap-smi}), we use this fact to determine the Ap\'ery set of the reflexive closure of an ideal (see Corollary~\ref{cor:ap-smsmi}). 

In the last two sections, we deal with the second 
goal of our paper, that is, the interactions between reflexivity and properties of the semigroup. 
More precisely, in Section
\ref{sec:some-families}
we characterize reflexive ideals for almost symmetric, maximal embedding dimension and Arf numerical semigroups, recovering results yet proved in \cite{CDS} for 
one dimensional rings. 
For these three classes, we
show that they can be characterized by the
nature of their reflexive ideals. 
Finally, in Section~\ref{sec:trace-ideals},
we deal with trace ideals, 
a class of ideals connected with that of reflexive ideals, but not included nor including it. In particular, after proving a one to one correspondence between reflexive trace ideals and reflexive oversemigroups (that is a numerical version of a result in \cite{GIK20}), we give two new characterization of Arf rings in terms of trace ideals.


\section{Preliminaries on numerical semigroups and ideals}\label{sed:prelim}

In this section, we fix the notation and give the basic notions on numerical semigroups and their ideals, which we will need throughout the rest of the paper.

Let $\mathbb{N}$ denote the set of non-negative integers. A \emph{numerical semigroup} is a submonoid of $(\mathbb{N},+)$ with finite complement in $\mathbb{N}$. The monoid $\mathbb N$ is itself a numerical semigroup, but to avoid trivialities, we will always assume that $S\neq \mathbb N$.

A non-empty set of integers $I$ is an \emph{ideal} (or relative ideal) of $S$ if $I+S=I$ and there is an integer $z$ such that $z+I\subseteq S$ (this last condition is equivalent to the existence of $\min(I)$). An ideal $I$ of a numerical semigroup $S$ is \emph{integral} if $I\subseteq S$. In particular, $M=S\setminus\{0\}$ is an integral ideal, known as the \emph{maximal ideal} of $S$. The set of ideals of $S$ will be denoted by $\mathfrak{I}(S)$.
For an introduction to ideals of numerical semigroups, the reader may refer to \cite[Chapter~3]{ns-app}.

If $S$ is a numerical semigroup, the fact that $\mathbb{N}\setminus S$ has finitely many elements implies that there is an integer $c$ such that $c+\mathbb{N}\subseteq S$. The minimum integer with this property, known as the \emph{conductor} of $S$, is denoted by $\operatorname{c}(S)$. Clearly, $\operatorname{c}(S)+\mathbb{N}$ is an integral ideal of $S$, known as the \emph{conductor ideal} of $S$, denoted by $\operatorname{C}(S)$.
Given an ideal $I$ of a numerical semigroup $S$,  from the fact that $z+I\subseteq S$ for some $z\in \mathbb{Z}$, we derive that there is a minimum integer $\operatorname{c}(I)$ such that $\operatorname{c}(I)+\mathbb{N}\subseteq I$, which by analogy is called the \emph{conductor} of $I$. We define $\operatorname{C}(I)=\operatorname{c}(I)+\mathbb{N}$.

If $I$ and $J$ are ideals of $S$, we say that $I$ is a \emph{translate} of $J$ if there exists an integer $z$ such that $I=z+J$ (notice that in this case $J$ is also a translate of $I$).

Every ideal $I$ of $S$ has a unique translate $\widetilde{I}$ such that $\operatorname{c}(\widetilde{I})=\operatorname{c}(S)$, namely 
\[
\widetilde{I}=(\operatorname{c}(S)-\operatorname{c}(I))+I.\]

Let $\operatorname{f}(S)=\max(\mathbb{Z}\setminus S)$ (its existence is guaranteed by the fact that $S$ has finite complement in $\mathbb{N}$), which is known as the \emph{Frobenius number} of $S$. Notice that $\operatorname{c}(S)=\operatorname{f}(S)+1$. For an ideal $I$ of $S$, we define, in a similar way, its Frobenius number as $\operatorname{f}(I)=\max(\mathbb{Z}\setminus I)$.

The Frobenius number of a numerical semigroup plays a key role in the theory of numerical semigroups. For example, it is used in the definition of the  \emph{canonical ideal} of $S$:
\[
\operatorname{K}(S)=\{ z\in \mathbb{Z} : \operatorname{f}(S)-z\not\in S\}.
\]
By definition, $\operatorname{K}(S)$ is an ideal of $S$ with $S\subseteq \operatorname{K}(S)\subseteq \mathbb{N}$. 
We say that $S$ is \emph{symmetric} if $\operatorname{K}(S)=S$.

To every numerical semigroup $S$ we can associate the following ordering on $\mathbb{Z}$: $a\le_S b$ if $b\in a+S$.

Given an ideal $I$, we say that it is generated by a set $A\subset I$, if all the elements of $I$ are of the form $a+s$, with $a\in A$ and $s \in S$. In this case, we write $I=A+S$. If we consider the minimal elements of $I$ with respect to $\le_S$, we have
\[
I=\operatorname{Minimals}_{\le_S}(I)+S.
\]
The set $\operatorname{Minimals}_{\le_S}(I)$ is the \emph{set of minimal generators} (or minimal generating set) of $I$. Since every element in this set is in a different congruence class modulo $\min(S\setminus\{0\})$, we deduce that $|\operatorname{Minimals}_{\le_S}(I)|\le \min(S\setminus\{0\})$. 
An ideal $I$ of $S$ is \emph{principal} if $I=z+S$ for some integer $z$. 
The integer $\min(S\setminus\{0\})$ is known as the \emph{multiplicity} of $S$, denoted by $\operatorname{m}(S)$.

The sum, intersection, and union of ideals of a numerical semigroup are again ideals of the same numerical semigroup. Moreover, if $I,J$ are ideals of a numerical semigroup $S$, the set 
\[I-J = \{x\in\mathbb{Z} : x+ J\subseteq I\}\]
is again an ideal of $S$. 

Let $S$ be a numerical semigroup. It is not difficult to prove that for $I,J,K\in \mathfrak{I}(S)$ and $x\in \mathbb{Z}$ the following properties, concerning the operations mentioned above, hold:
\begin{enumerate}[({D }1)]
    \item \label{D-1}$(x+I)-J=x+(I-J)$ and $I-(x+J)=-x+(I-J)$;
    \item \label{D-2}if $J\subseteq K$, then $I-J \supseteq I-K$ and $J-I\subseteq K-I$;
    \item \label{D-3} $I-(J\cap K)\supseteq (I-J)\cup (I-K)$;
    \item \label{D-4} $I-(J\cup K)=(I-J)\cap (I-K)$;
    \item \label{D-5} $(I-J)-K=I-(J+K)$;
    \item \label{D-6} $I+J\subseteq K$ if and only if $I\subseteq K-J$.
\end{enumerate}

Observe that $S-\mathbb{N}= \operatorname{c}(S)+\mathbb{N}=\operatorname{C}(S)$, the \emph{conductor ideal} of $S$. Similarly, for every ideal $I$ of $S$, 
\begin{equation}\label{eq:conductor-quotient}
I-\mathbb{N}=\operatorname{C}(I).
\end{equation}

Notice that for every ideals $I$ and $J$, from the definition of $J-I$, we have $I+(J-I)\subseteq J$, and thus by~\ref{D-6}, we obtain
\begin{enumerate}[({D }7)]
    \item \label{D-7} $I\subseteq J-(J-I)$.
\end{enumerate}
If we apply~\ref{D-2} to~\ref{D-7}, we deduce that $J-I\supseteq J-(J-(J-I))$, and if we use~\ref{D-7} for $J-I$, we obtain the other inclusion. This proves 
\begin{enumerate}[({D }8)]
    \item \label{D-8} $J-I=J-(J-(J-I))$.
\end{enumerate}
In particular,
\[
S-I=S-(S-(S-I)).
\]

Given an ideal $I$ of a numerical semigroup $S$, its set of \emph{pseudo-Frobenius numbers} is the set 
\[\operatorname{PF}(I)=(I-M)\setminus I,\] 
where $M$ is the maximal ideal of $S$. Notice that $\operatorname{PF}(I)$ is the set of maximal elements of $\mathbb{Z}\setminus I$ with respect to $\le_S$. The  \emph{type} of $I$ is defined as $\operatorname{t}(I)=|\operatorname{PF}(I)|$. 

According to \cite[Proposition~24]{ns-app} and \cite[Corollary~25]{ns-app}, every ideal $I $ of a numerical semigroup $S$ can expressed as the intersection of $\operatorname{t}(I)$ $\mathbb{Z}$-irreducible ideals (in the sense that they cannot be expressed as the intersection of two other ideals properly containing them). The $\mathbb{Z}$-irreducible ideals turn out to be translations of the canonical ideal of $S$. This decomposition into irreducible ideals is unique (and cannot be refined). 


We see next that this bound applies for every expression of any ideal as an intersection of other ideals properly containing it and so that none of them can be removed. But first we will prove a couple of technical lemmas, which are the analogues for ideals to \cite[Lemma~6]{ns-app} and \cite[Proposition~189]{ns-app}, respectively.

\begin{lemma}\label{lem:max-difference}
Let $S$ be a numerical semigroup. If $I$ and $J$ are ideals of $S$ with $I\subsetneq J$, then $\max(J\setminus I)\in \operatorname{PF}(I)$.
\end{lemma}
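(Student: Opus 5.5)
First I check that the maximum $x=\max(J\setminus I)$ exists. The set $J\setminus I$ is nonempty because $I\subsetneq J$. It is bounded above because $\operatorname{C}(I)=\operatorname{c}(I)+\mathbb{N}\subseteq I$: every element of $J\setminus I$ lies outside $I$, so it is smaller than $\operatorname{c}(I)$. Being a nonempty set of integers bounded above, $J\setminus I$ has a maximum $x$.

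By definition $\operatorname{PF}(I)=(I-M)\setminus I$. We already have $x\notin I$, so it remains to show $x\in I-M$, that is, $x+m\in I$ for every $m\in M=S\setminus\{0\}$.

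Fix $m\in M$. Since $x\in J$ and $J$ is an ideal, $J+S=J$, so $x+m\in J$. Since $m\geq 1$, we have $x+m>x$. By maximality of $x$ in $J\setminus I$, the element $x+m$ cannot lie in $J\setminus I$. As it lies in $J$, it must lie in $I$. Hence $x+M\subseteq I$, so $x\in I-M$, and therefore $x\in\operatorname{PF}(I)$.

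There is no real obstacle. The only point needing care is the existence of the maximum, which follows from the conductor of $I$ as noted above.
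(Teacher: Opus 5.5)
Your proof is correct and follows the paper's argument: for each $m\in M$, $x+m$ lies in $J$ and exceeds $x$, so it must lie in $I$, giving $x\in(I-M)\setminus I=\operatorname{PF}(I)$. The only addition is your explicit check, via the conductor $\operatorname{c}(I)$, that $\max(J\setminus I)$ exists, which the paper takes for granted.
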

\begin{proof}
    Let $f=\max(J\setminus I)$. Then, $f+s\in J$ for all $s\in S\setminus\{0\}$,  and, as $f+s>f$, we have  $f+s\in I$, which means that $f\in \operatorname{PF}(I)$.
\end{proof}

\begin{lemma}\label{lem:char-int-ideals-gen}
    Let $I$ be an ideal of a numerical semigroup $S$, and let $I_1,\dots,I_k$ be ideals of $S$ with $I\subseteq I_i$ for all $i\in \{1,\dots,k\}$. The following are equivalent:
    \begin{itemize}
        \item $I=I_1\cap \dots \cap I_k$,
        \item $\operatorname{PF}(I)\subseteq\bigcup_{i=1}^k (\mathbb{Z}\setminus I_i)$. 
    \end{itemize}
\end{lemma}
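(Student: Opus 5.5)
The plan is to prove the two implications separately. Throughout we use that $I\subseteq I_1\cap\dots\cap I_k$ by hypothesis, and that $\operatorname{PF}(I)\cap I=\emptyset$ by definition.

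For the forward implication, assume $I=I_1\cap\dots\cap I_k$ and take $f\in\operatorname{PF}(I)$. Since $f\notin I$, $f$ does not lie in the intersection, so $f\notin I_i$ for some $i$. Hence $f\in\bigcup_{i=1}^k(\mathbb{Z}\setminus I_i)$, which gives the inclusion of $\operatorname{PF}(I)$ into this union.

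For the converse we argue by contradiction. Set $J=I_1\cap\dots\cap I_k$; it is an ideal of $S$, being an intersection of ideals, and $I\subseteq J$. Suppose $I\subsetneq J$. By Lemma~\ref{lem:max-difference}, $f=\max(J\setminus I)\in\operatorname{PF}(I)$. The maximum exists because $J\setminus I\subseteq \mathbb{Z}\setminus I$, which is bounded above by $\operatorname{f}(I)$. Since $f\in J$, we have $f\in I_i$ for every $i$, so $f\notin\bigcup_{i=1}^k(\mathbb{Z}\setminus I_i)$. This contradicts $\operatorname{PF}(I)\subseteq\bigcup_{i=1}^k(\mathbb{Z}\setminus I_i)$, and therefore $I=J$.

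There is no real obstacle here. The only point needing care is the existence of $\max(J\setminus I)$ and the fact that $J$ is an ideal, so that Lemma~\ref{lem:max-difference} applies; both follow from the preliminaries.
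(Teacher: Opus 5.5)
Your proposal is correct and follows essentially the same argument as the paper: the forward direction uses $f\notin I$ directly, and the converse takes $f=\max\bigl((I_1\cap\dots\cap I_k)\setminus I\bigr)$, applies Lemma~\ref{lem:max-difference} to get $f\in\operatorname{PF}(I)$, and reaches a contradiction. You also state explicitly two points the paper leaves implicit: the intersection is an ideal, and the maximum exists.
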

\begin{proof}
    We are going to follow the same argument as \cite[Proposition~18]{ns-app}.

    Suppose that $I=I_1\cap \dots \cap I_k$.  Take $f\in \operatorname{PF}(I)$. Then, $f\not\in I$, and so there exists $j\in \{1,\dots,k\}$ such that $f\not\in I_j$, and thus $f\in \mathbb{Z}\setminus I_j$. 
    
    For the other implication, suppose that $I\subsetneq I_1\cap \dots \cap I_k$. Set $f=\max((I_1\cap \dots \cap I_k)\setminus I)$. By Lemma~\ref{lem:max-difference}, we get $f\in \operatorname{PF}(I)$, and by hypothesis, there exists $j\in \{1,\dots,k\}$ such that $f\in \mathbb{Z}\setminus I_j$. Thus, $f\not\in I_j$ and consequently $f\not\in I_1\cap \dots \cap I_k$, a contradiction.
\end{proof}

\begin{proposition}\label{prop:bound-decomposition-intersection}
Let $I$ be an ideal of a numerical semigroup $S$, and let $I_1,\dots,I_k$ be ideals of $S$ with $I\subseteq I_i$ for all $i\in \{1,\dots,k\}$. Suppose that $I=I_1\cap \dots \cap I_k$ and that $I\neq \bigcap_{i\in J} I_j$ for any $J\subsetneq \{1,\dots,k\}$. Then, $k
 \le\operatorname{t}(I)$.
\end{proposition}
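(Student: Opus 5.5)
We build an injective map from $\{1,\dots,k\}$ into $\operatorname{PF}(I)$, using Lemma~\ref{lem:char-int-ideals-gen} together with the irredundancy hypothesis.

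Fix $i\in\{1,\dots,k\}$. By hypothesis, $I\neq\bigcap_{j\neq i}I_j$. Since $I\subseteq I_j$ for all $j$, the implication from the second to the first condition in Lemma~\ref{lem:char-int-ideals-gen}, applied to the family $\{I_j\}_{j\neq i}$, shows that $\operatorname{PF}(I)\not\subseteq\bigcup_{j\neq i}(\mathbb{Z}\setminus I_j)$. So we can choose $f_i\in\operatorname{PF}(I)$ with $f_i\in I_j$ for every $j\neq i$.

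Next, $I=I_1\cap\dots\cap I_k$ and $f_i\notin I$, so $f_i\notin I_\ell$ for some $\ell$. Since $f_i$ lies in every $I_j$ with $j\neq i$, necessarily $\ell=i$, that is, $f_i\notin I_i$.

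For injectivity, suppose $f_i=f_{i'}$ with $i\neq i'$. By construction $f_{i'}\in I_i$, while $f_i\notin I_i$, a contradiction. Thus $i\mapsto f_i$ is an injection of $\{1,\dots,k\}$ into $\operatorname{PF}(I)$, and hence $k\le|\operatorname{PF}(I)|=\operatorname{t}(I)$.

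The only delicate step is the degenerate case $k=1$, where the family $\{I_j\}_{j\neq i}$ is empty. Here the empty intersection is interpreted as $\mathbb{Z}$, so the hypothesis says $I\neq\mathbb{Z}$. This holds because ideals have a minimum. Then $\operatorname{PF}(I)\neq\emptyset$: taking $f=\max(\mathbb{Z}\setminus I)$, which exists since $\operatorname{C}(I)\subseteq I$, we get $f\in\operatorname{PF}(I)$, as in Lemma~\ref{lem:max-difference} with $J=\mathbb{Z}$. So $k=1\le\operatorname{t}(I)$ directly.
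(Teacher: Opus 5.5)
Your proof is correct and follows the same route as the paper. Both apply Lemma~\ref{lem:char-int-ideals-gen} to the subfamilies obtained by dropping one ideal, so that each $I_i$ is the only ideal of the family missing some pseudo-Frobenius number $f_i$ of $I$. Your explicit injection $i\mapsto f_i$ into $\operatorname{PF}(I)$, together with the separate check of the case $k=1$, spells out what the paper compresses into ``this forces $k$ to be at most the cardinality of $\operatorname{PF}(I)$''.
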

\begin{proof}
By Lemma~\ref{lem:char-int-ideals-gen}, we know that $\operatorname{PF}(I)\subseteq \bigcup_{i=1}^k (\mathbb{Z}\setminus I_i)$. The condition $I\neq \bigcap_{i\in J} I_j$ for any $J\subsetneq \{1,\dots,k\}$, implies by the same lemma that $\operatorname{PF}(I)\not\subseteq \bigcup_{j\in J} (\mathbb{Z}\setminus I_j)$. This forces $k$ to be at most the cardinality of $\operatorname{PF}(I)$.
\end{proof}

\section{Reflexive ideals and reflexive closure}\label{sed:reflexive-ideals}


In this section, we start with our study of reflexive ideals of numerical semigroups. Let $S$ be a numerical semigroup; an ideal $I$ of $S$
is said to be \emph{reflexive} if 
$$
S-(S-I)=I.
$$
Reflexive ideals are also known in the literature as bi-dual ideals (for instance, in \cite[Section~3.2]{ns-app}) or divisorial ideals \cite{Spi15}.
For every ideal $I$ of $S$, we will denote by 
\[
I^v=S-(S-I),
\]
and call $I^v$ the \emph{reflexive closure} of $I$. 

\begin{proposition}\label{prop: star basic properties}
The map
$^v\colon \mathfrak{I}(S)\to \mathfrak{I}(S),\ I\mapsto I^v$
is well defined and fulfils the following properties, for every $I,J\in\mathfrak{I}(S)$ and $a\in \mathbb{Z}$. 
\begin{enumerate}
    \item if $I\subseteq J$, then $I^v\subseteq J^v$; 
    \item $I\subseteq I^v$;
    \item $(I^v)^v = I^v$; 
     \item $I^v\cup J^v\subseteq (I\cup J)^v$;  
    \item $(I\cap J)^v \subseteq I^v\cap J^v$;
    \item $(a+I)^v=a+I^v$.
\end{enumerate}
\end{proposition}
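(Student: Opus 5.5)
The plan is to derive every item from the elementary properties (D1)--(D8) of the difference operation, without returning to elements except where needed. First I check well-definedness: since $I-J$ is an ideal whenever $I,J$ are ideals, applying this twice shows that $S-I$ and then $S-(S-I)=I^v$ are ideals of $S$. So ${}^v$ maps $\mathfrak{I}(S)$ to itself.

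For (1), I apply (D2) twice. From $I\subseteq J$ we get $S-I\supseteq S-J$, and applying (D2) again reverses the inclusion once more, giving $S-(S-I)\subseteq S-(S-J)$. Item (2) is (D7) with $J=S$. For (3), I note that (D8) with $J=S$ gives $S-(S-(S-I))=S-I$. Applying $S-(\cdot)$ to both sides then yields $(I^v)^v=S-(S-(S-(S-I)))=S-(S-I)=I^v$.

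Items (4) and (5) follow from monotonicity (1). Since $I\subseteq I\cup J$ and $J\subseteq I\cup J$, we get $I^v\subseteq (I\cup J)^v$ and $J^v\subseteq (I\cup J)^v$, hence their union is contained in $(I\cup J)^v$. The union and intersection of ideals are ideals, as recalled in Section~\ref{sed:prelim}, so all expressions make sense. Dually, $I\cap J\subseteq I$ and $I\cap J\subseteq J$ give $(I\cap J)^v\subseteq I^v\cap J^v$.

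For (6), I use (D1) twice. First, $S-(a+I)=-a+(S-I)$. Then $S-(-a+(S-I))=a+(S-(S-I))=a+I^v$, where the second difference uses the second formula of (D1) with $x=-a$. Here $a+I$ is an ideal because $(a+I)+S=a+I$ and $a+I$ has a minimum.

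No step is a real obstacle. The only care needed is keeping track of the sign in (D1) in item (6) and of the double reversal of inclusions in (1).
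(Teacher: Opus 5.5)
Your proposal is correct and matches the paper's proof almost step by step: (1) via (D2) applied twice, (2) and (3) via (D7) and (D8), (5) via monotonicity, and (6) via (D1). The only deviation is in (4): you get the inclusion from monotonicity (1), while the paper uses (D4) and (D3). Both arguments are equally short and valid.
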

\begin{proof}
 Property (1) follows from  Property~\ref{D-2} of the previous section, dualizing $I \subseteq J$ with $S$ twice, and property (5) is a consequence of (1). Properties (2) and (3) are direct applications of~\ref{D-7} and~\ref{D-8}, respectively. Concerning (4), note that, applying~\ref{D-4} and~\ref{D-3},
 \[
 S-(S-(I \cup J))=S-((S-I) \cap (S-J)) \supseteq  (S-(S-I)) \cup (S-(S-J))\,.
 \]
Property (6) follows from~\ref{D-1}.
\end{proof}

As a direct consequence of item (6) of the previous proposition, we have that, if $I$ is reflexive, so is $x+I$ for every integer $x$.

Proposition~\ref{prop: star basic properties} also implies that $^v$ is a star operation on $S$, also known as the $v$-operation (see \cite{Spi15}). A \textit{star operation} on $\mathfrak{I}(S)$ is a map
    $$^*\,:\,\mathfrak{I}(S) \longrightarrow \mathfrak{I}(S)$$ such that for every $I,J \in \mathfrak{I}(S)$ one has:
\begin{enumerate}[(a)]
\item $I\subseteq I^*$,
\item if $I \subseteq J$, then $I^* \subseteq J^*$;
\item $(I^*)^*=I^*$;
\item $a+I^*=(a+I)^*$ for every $a \in \mathbb{Z}$;
\item $S^*=S$.
\end{enumerate}
If $*_1$ and $*_2$ are star operations on $\mathfrak{I}(S)$, $*_1$ is \emph{coarser} than $*_2$ if $I^{*_2} \subseteq I^{*_1}$ for every $I \in \mathfrak{I}(S)$; the $v$-operation is the coarsest star operation on $\mathfrak{I}(S)$ and coincides with the identity operation if and only if $S$ is symmetric \cite{Spi20} (in particular, in a symmetric numerical semigroup, every ideal is reflexive).



\begin{example}\label{ex:S-and-N}
    Let $S$ be a numerical semigroup. Then, $S$ and $\mathbb{N}$ are ideals of $S$. From $S+S=S$, we deduce that $S\subseteq S-S$, and if $z\in S-S$, then $z+0\in S$, forcing $z\in S$. Thus, $S- S=S$, and so $S^v=S-(S-S)=S$, meaning that $S$ is a reflexive ideal of $S$.

    We already know that $S-\mathbb{N}=\operatorname{C}(S)$. Thus, by~\ref{D-1}, $S-(S-\mathbb{N})=S-(\operatorname{c}(S)+\mathbb{N})=-\operatorname{c}(S)+\operatorname{c}(S)+\mathbb{N}=\mathbb{N}$. Hence, $\mathbb{N}$ is also a reflexive ideal of $S$.
\end{example}

\begin{remark}\label{rem:char-ref}
    In light of~\ref{D-8}, an ideal $I$ is reflexive if and only if there exists an ideal $J$ such that $I=S-J$ and one can assume $J$ to be reflexive, indeed one can choose $J=S-I$. 
\end{remark}


\begin{corollary}\label{cor:int-irred}
    The intersection of two reflexive ideals is reflexive. 
\end{corollary}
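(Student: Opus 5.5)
The plan is to use Remark~\ref{rem:char-ref} together with property~\ref{D-4}. Let $I$ and $J$ be reflexive ideals of $S$. By Remark~\ref{rem:char-ref}, we may write $I=S-I'$ and $J=S-J'$, where $I'=S-I$ and $J'=S-J$ are ideals of $S$. Property~\ref{D-4} then gives
\[
I\cap J=(S-I')\cap(S-J')=S-(I'\cup J').
\]
Since the union of two ideals of $S$ is again an ideal of $S$, the set $I'\cup J'$ is an ideal. Thus $I\cap J$ has the form $S-K$ for the ideal $K=I'\cup J'$, and Remark~\ref{rem:char-ref} (equivalently, \ref{D-8}) shows that $I\cap J$ is reflexive.

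As an alternative argument, I would compare the closures directly. Item~(2) of Proposition~\ref{prop: star basic properties} gives $I\cap J\subseteq (I\cap J)^v$. Item~(5) gives $(I\cap J)^v\subseteq I^v\cap J^v=I\cap J$, where the last equality uses that $I$ and $J$ are reflexive. Together these give equality.

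Neither argument has a genuine obstacle. The only point needing care is to confirm that $I\cap J$ is a relative ideal, so that reflexivity makes sense for it; this is immediate because $I\cap J$ is nonempty (it contains large integers) and has a minimum.
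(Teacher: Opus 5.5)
Both of your arguments are correct. Your alternative argument is essentially the paper's proof. The paper writes out the chain
\[
S-(S-(I\cap J))\subseteq S-((S-I)\cup(S-J))=(S-(S-I))\cap(S-(S-J))=I\cap J
\]
using \ref{D-3}, \ref{D-2} and \ref{D-4}. This is just item (5) of Proposition~\ref{prop: star basic properties} unpacked, with the reverse inclusion supplied by \ref{D-7}.

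Your main argument takes a genuinely different route. Rather than bounding $(I\cap J)^v$ from above, you use \ref{D-4} to identify $I\cap J$ exactly as the dual $S-((S-I)\cup(S-J))$. You then conclude via \ref{D-8} (Remark~\ref{rem:char-ref}) that every ideal of the form $S-K$ is reflexive. This bypasses the one-sided inclusion \ref{D-3}. It also gives the extra identity $S-(I\cap J)=((S-I)\cup(S-J))^v$.

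On the other hand, your main argument depends on the specific duality $I\mapsto S-I$ that defines the $v$-operation. The paper's argument (and your second one) uses only that $^v$ is extensive and monotone. So it shows verbatim that intersections of $*$-closed ideals are $*$-closed for any star operation $*$.
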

\begin{proof}
    Let $S$ be a numerical semigroup, and let $I$ and $J$ be reflexive ideals of $S$. Then by \ref{D-2}, ~\ref{D-3} and~\ref{D-4} we have 
    \[
    S-(S-(I\cap J))\subseteq S-((S-I) \cup (S-J))=(S-(S-I)) \cap (S-(S-J))=I \cap J,
    \]
    which proves that $I\cap J$ is also a reflexive ideal.
\end{proof}

According to the last result and the paragraph preceding it, the intersection of principal ideals is reflexive. We next see that $I^v$ is precisely the intersection of principal ideals, and, as a consequence, every reflexive ideal is the intersection of principal ideals.
This result is inspired by the slides \cite{vb-slides} of a talk given by V. Barucci.

\begin{proposition}\label{prop:expr-I-star}

Let $S$ be a numerical semigroup and let $I$ be an ideal of $S$. Set $A=\operatorname{Minimals}_{\le_S}(S-I)$.
Then,
\[
I^v=\bigcap_{z\in A} -z+S.
\]
In particular, the cardinality of $A$ is at most the multiplicity of $S$. Moreover, if $0\in I$, then $A\subseteq S$. 

\end{proposition}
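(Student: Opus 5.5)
The plan is to compute $S-J$ directly for $J=S-I$, using the fact that $J$ is generated by its $\le_S$-minimal elements. Since $J$ is an ideal of $S$, we have $J=A+S$ with $A=\operatorname{Minimals}_{\le_S}(J)$. By definition,
\[
S-J=\{x\in\mathbb{Z} : x+J\subseteq S\}.
\]
The key step is that $x+J\subseteq S$ holds if and only if $x+a\in S$ for every $a\in A$. One direction is immediate because $A\subseteq J$. For the converse, any element of $J$ has the form $a+s$ with $a\in A$ and $s\in S$, so $x+a+s\in S+S=S$. Hence
\[
S-J=\bigcap_{a\in A}\{x : x+a\in S\}=\bigcap_{a\in A}(-a+S),
\]
which is exactly the claimed formula for $I^v=S-(S-I)$. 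An equivalent route is to write $J=\bigcup_{a\in A}(a+S)$ and apply \ref{D-4} together with \ref{D-1} and $S-S=S$ from Example~\ref{ex:S-and-N}. Either version works, but \ref{D-4} is stated for two ideals, so it would have to be iterated over finitely many of them. That in turn needs $A$ to be finite, which is the next point.

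For the cardinality bound, I use the remark in the preliminaries: distinct $\le_S$-minimal elements of an ideal lie in distinct congruence classes modulo $\operatorname{m}(S)$. Indeed, if $a\equiv b \pmod{\operatorname{m}(S)}$ with $a<b$, then $b=a+k\operatorname{m}(S)\in a+S$, contradicting the minimality of $b$. Therefore $|A|\le\operatorname{m}(S)$, and in particular $A$ is finite, so the intersection is finite.

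For the last claim, assume $0\in I$. If $z\in S-I$, then $z+I\subseteq S$, and taking the element $0\in I$ gives $z\in S$. Thus $S-I\subseteq S$, and so $A\subseteq S-I\subseteq S$.

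None of the steps is a serious obstacle. The only point that needs care is the reduction from $J$ to its minimal generators, and this rests on the equality $J=\operatorname{Minimals}_{\le_S}(J)+S$. That equality holds because $J$ has a minimum and every descending $\le_S$-chain in $J$ is also descending in the usual order, so it is bounded below and must terminate. The paper already records this equality, so I will simply cite it.
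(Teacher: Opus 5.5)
Your proposal is correct and matches the paper's argument. The paper first writes $I^v=\bigcap_{z\in S-I}(-z+S)$ and then discards non-minimal $z$ using $x\le_S x'\iff -x+S\subseteq -x'+S$, whereas you reduce to the generators directly via $S-I=A+S$; the bound on $|A|$ via congruence classes modulo $\operatorname{m}(S)$ and the case $0\in I$ are handled exactly as in the paper.
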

\begin{proof}
    Notice that $x\in S-(S-I)=I^v$ if and only if $x+(S-I)\subseteq S$. This holds if and only if $x+y\in S$ for all $y\in \mathbb{Z}$ such that $y\in S-I$, or equivalently, $x\in -y+S$ for all $y\in S-I$. 
    This proves that 
    \[
    I^v=\bigcap_{z\in S-I} -z+S.
    \]
    
    Let $x,x'\in \mathbb{Z}$. Then $x\le_S x'$ if and only if $x'+S\subseteq x+S$, or equivalently, $-x+S\subseteq -x'+S$. Thus, in the above expression of $I^v$, we can replace $S-I$ with $\operatorname{Minimals}_{\le_S}(A)$, that is, the minimal generators of the ideal $S-I$; its cardinality is at most the multiplicity of $S$, since, as we already mentioned above, every element in this set must be in a different congruence class modulo the multiplicity.

    Now, suppose that $0\in I$; it is straightforward that $S-I \subseteq S$, and so $A$ is contained in $S$.
\end{proof}

The following remark gives an upper bound on the number of principal ideals involved in the (non-redundant) decomposition of a reflexive ideal in terms of principal ideals.

\begin{remark}\label{rem:on-lenght-decomposition-principal-ideals}
    Notice that if $K$ is the canonical ideal of $S$ and $I$ is an ideal of $S$, then by \cite[Lemma~14]{ns-app}, $K-K=S$ and thus from \ref{D-4} and \eqref{eq:prop-canonical} we deduce
    \[
    K-(S-I)=K-((K-K)-I)=K-(K-(K+I))=K+I.
    \]
    Hence, by \ref{D-4}, $K-((S-I)+M)=(K-(S-I))-M=(K+I)-M$.  
    Thus, using \cite[(3.3) page 37]{ns-app}, we get
\begin{align*}
    |(S-I)\setminus ((S-I)+M)|&=|(K-((S-I)+M)) \setminus (K-(S-I))|\\
    &=|((K+I)-M) \setminus (K+I)|\\
    &=\operatorname{t}(K+I).
\end{align*}
Notice that $\operatorname{Minimals}_{\le_S}(S-I)=(S-I)\setminus ((S-I)+M)$.
Therefore, from the proof of Proposition~\ref{prop:expr-I-star} we deduce that a reflexive ideal $I$ can be expressed as an intersection of at most $\operatorname{t}(K+I)$ principal ideals. If we combine this with Proposition~\ref{prop:bound-decomposition-intersection}, we get that a reflexive ideal can be expressed as an intersection of at most $\min(\operatorname{t}(I),\operatorname{t}(I+K))$ ideals (so that his intersection cannot be refined).
\end{remark}

The following example shows that $\operatorname{t}(I)$ and $\operatorname{t}(I+K)$ are not one smaller than the other in general.

\begin{example}\label{ex:ti-tik-incomparable}
    Let $S=\langle 8, 10, 12, 13, 17, 19\rangle$ and let $I=\{0, 1, 4, 5, 6, 7\}+S$. Then, $I$ is reflexive and $\operatorname{t}(I+K)=7>\operatorname{t}(I)=6$. 
    For $J=\{0, 14, 15\}+S$, we have $\operatorname{t}(J)=7>\operatorname{t}(J+K)=6$.
\end{example}


The next result states that every principal ideal cannot be expressed as the intersection of principal ideals properly containing it. As a consequence of this result and Proposition~\ref{prop:expr-I-star}, we deduce that a reflexive ideal $I$ is irreducible (in the sense that it cannot be expressed as the intersection of two  reflexive ideals properly containing it) if and only if $I$ is a principal ideal.

\begin{proposition}\label{prop:p ideals}
    Let $S$ be a numerical semigroup. If $P,I_1,\dots, I_n$ are principal ideals of $S$ such that
\[
P=I_1\cap \dots \cap I_n,
\]
then $I_k=P$ for some $k\in \{1,\dots, n\}$.
\end{proposition}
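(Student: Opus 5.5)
Write $P=p+S$ and $I_i=a_i+S$ for $i\in\{1,\dots,n\}$. The plan is to work with the pseudo-Frobenius numbers of $P$ and apply Lemma~\ref{lem:char-int-ideals-gen}. Since $P\subseteq I_i$ for every $i$, each $I_i$ contains $P$, so the lemma applies. It gives
\[
\operatorname{PF}(P)\subseteq \bigcup_{i=1}^n (\mathbb{Z}\setminus I_i).
\]

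Next we identify $\operatorname{PF}(P)$. By \ref{D-1}, $P-M=p+(S-M)$, and hence $\operatorname{PF}(P)=p+\operatorname{PF}(S)$. In particular $p+\operatorname{f}(S)\in \operatorname{PF}(P)$. Since $S\neq\mathbb{N}$, the Frobenius number $\operatorname{f}(S)$ is a genuine gap with $\operatorname{f}(S)\ge 1$, so this element exists. Therefore there is some $k$ with $p+\operatorname{f}(S)\notin I_k=a_k+S$, that is, $p-a_k+\operatorname{f}(S)\notin S$.

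The remaining step is to show $a_k=p$. From $P\subseteq I_k$ we get $p\in a_k+S$, so $d:=p-a_k\in S$. If $d\neq 0$, then $d\in M$, and $\operatorname{f}(S)+d\in S$ because $\operatorname{f}(S)\in\operatorname{PF}(S)$. This contradicts $d+\operatorname{f}(S)\notin S$. Hence $d=0$ and $I_k=P$.

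This last step is the only point requiring care. If one prefers to avoid Lemma~\ref{lem:char-int-ideals-gen}, there is a direct argument. If every $d_i=p-a_i$ were nonzero, then each $d_i\in M$, so $p+\operatorname{f}(S)=a_i+(d_i+\operatorname{f}(S))\in a_i+S$ for all $i$. Thus $p+\operatorname{f}(S)$ would lie in $\bigcap_i I_i=P$, which is impossible because $\operatorname{f}(S)\notin S$. I do not expect any real obstacle.
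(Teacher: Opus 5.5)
Your proof is correct and is essentially the paper's argument. The paper shows that if $p-a_k\in M$ for every $k$, then $p+\operatorname{f}(S)$ lies in every $I_k$ and hence in $P$, a contradiction; this is exactly your closing ``direct argument.'' Your main route reaches the same point, but the detour through Lemma~\ref{lem:char-int-ideals-gen} and $\operatorname{PF}(P)=p+\operatorname{PF}(S)$ is harmless and unnecessary, since all you use is that $p+\operatorname{f}(S)\notin P=\bigcap_i I_i$.
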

\begin{proof}
    Let $z,x_1,\dots, x_n \in \mathbb{Z}$ be such that
\[
z+S=(x_1+S)\cap \dots\cap (x_n+S).
\]
 The claim is equivalent to prove that $z=x_k$ for at least one $k\in \{1,\dots, n\}$. We can write
\[
S=(x_1-z+S)\cap \dots\cap (x_n-z+S).
\]
Let $a_k=z-x_k$ for all $k=1,\dots, n$. We have that 
\[
0\in S=(-a_1+S)\cap \dots\cap (-a_n+S)\subseteq -a_k+S
\]
and thus $a_k \in S$ for all $k=1,\dots, n$, in particular, $a_k \geq\min(S)=0$. We need to show that $a_k=0$ for some $k\in \{1,\dots, n\}$.\\
By way of contradiction, assume that $a_k>0$ for all $k=1,\dots, n$, then $\operatorname{f}(S)<\operatorname{f}(S)+a_k$ and therefore $ \operatorname{f}(S)+a_k\in S$ for all $k=1,\dots, n$, which is equivalent to $\operatorname{f}(S)\in -a_k+S$ for all $k=1,\dots, n$. This implies
\[
\operatorname{f}(S)\in (-a_1+S)\cap \dots\cap (-a_n+S)=S
\]
and that is a contradiction.
\end{proof}

\begin{remark}
    With the same proof of Proposition~\ref{prop:p ideals}, it can be proven that no ideal of $S$ is equal to an intersection of translates of itself, since principal ideals are translates of $S$. 
\end{remark}

The decomposition given in Proposition~\ref{prop:expr-I-star} is in general far from being optimal as the following example shows. {The computations can be performed using the \texttt{NumericalSgps} \cite{numericalsgps} \texttt{GAP} \cite{GAP4} package.

\begin{example}
    Let $S=\langle  6, 7, 16\rangle$ and let $I=\{ 0, 1, 3, 5\}+S$. Then, \[\operatorname{Minimals}_{\le_S}(S-I)=\{13, 18, 21, 22, 23\},\] while $I=(-13+S)\cap (-18+S)\cap (-21+S)$.
\end{example}

As a consequence of Proposition~\ref{prop:expr-I-star} we obtain the following characterization of reflexive ideals.

\begin{corollary}\label{cor:char-ref}
    Let $S$ be a numerical semigroup and let $I$ be an ideal of $S$. Then $I$ is reflexive if and only if it is the intersection of principal ideals. In particular, $I$ is the intersection of all principal ideals containing $I$.
\end{corollary}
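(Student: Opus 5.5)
The plan is to prove the two implications separately, relying on Proposition~\ref{prop:expr-I-star} for one direction and on Corollary~\ref{cor:int-irred} together with item~(6) of Proposition~\ref{prop: star basic properties} for the other.

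\emph{Reflexive implies intersection of principal ideals.} If $I$ is reflexive, then $I=I^v$. Proposition~\ref{prop:expr-I-star} gives $I=\bigcap_{z\in A}(-z+S)$ with $A=\operatorname{Minimals}_{\le_S}(S-I)$. This set is finite and nonempty, since $S-I$ is an ideal and hence nonempty. So this direction is immediate.

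\emph{Intersection of principal ideals implies reflexive.} Suppose $I=\bigcap_{z\in B}(z+S)$ for some nonempty family $B$ of integers. Each $z+S$ is reflexive: $S$ is reflexive by Example~\ref{ex:S-and-N}, and translates of reflexive ideals are reflexive by item~(6) of Proposition~\ref{prop: star basic properties}. If $B$ is finite, Corollary~\ref{cor:int-irred} and induction give that $I$ is reflexive.

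The only point needing care is an infinite family $B$. I will avoid reducing to a finite subfamily and instead argue with monotonicity. For each $z\in B$ we have $I\subseteq z+S$, so by item~(1) of Proposition~\ref{prop: star basic properties},
\[
I^v\subseteq (z+S)^v=z+S.
\]
Hence $I^v\subseteq\bigcap_{z\in B}(z+S)=I$. Combined with item~(2), this gives $I^v=I$. This argument covers the finite case as well.

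\emph{The ``in particular'' statement.} Let $\mathcal P$ be the set of principal ideals containing $I$. Clearly $I\subseteq\bigcap\mathcal P$. Conversely, each $-z+S$ with $z\in A$ contains $I^v=I$, so it belongs to $\mathcal P$. Therefore
\[
\bigcap\mathcal P\subseteq\bigcap_{z\in A}(-z+S)=I.
\]
The monotonicity argument above applied to $\mathcal P$ also shows directly that $\bigcap\mathcal P$ is reflexive. I expect no real obstacle here; the only subtle point was the possibly infinite intersection, and the monotonicity argument handles it.
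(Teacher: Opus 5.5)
Your proof is correct and follows the paper's. The forward direction and the ``in particular'' clause come from Proposition~\ref{prop:expr-I-star} exactly as in the paper, and for the converse the paper likewise uses that principal ideals are reflexive and that intersections of reflexive ideals are reflexive. Your monotonicity step ($I\subseteq z+S$ implies $I^v\subseteq z+S$) is a small but genuine improvement: Corollary~\ref{cor:int-irred} as stated only covers finite intersections, whereas your argument handles an arbitrary family $B$ directly.
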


\begin{proof}
If $I$ is reflexive, then $I=I^v$, and by Proposition~\ref{prop:expr-I-star}, $I$ is the intersection of principal ideals.

We know that principal ideals are reflexive, since we have shown that $S$ is reflexive and that the translation of every reflexive ideal is again reflexive. Also, the intersection of reflexive ideals is reflexive, and thus if $I$ is the intersection of principal ideals, then $I$ is reflexive.

If $I$ is reflexive, then from the proof of Proposition~\ref{prop:expr-I-star}, we know that $I=\bigcap_{z\in S-I} -z+S$. The ideals $-z+S$, with $z\in S-I$ are precisely the principal ideals of $S$ containing $I$.
\end{proof}

The following result is also extracted from \cite{vb-slides}. We include the proof for sake of completeness (and also because our notation is slightly different).

\begin{lemma}\label{lem:I-not-included-principal-ideal}
Let $S$ be a numerical semigroup, let $I$ be an ideal of $S$, and let $z$ be an integer. Then, $I\not\subseteq z+S$ if and only if $\operatorname{f}(S)+z\in I+\operatorname{K}(S)$.
\end{lemma}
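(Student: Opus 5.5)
The plan is to prove both implications directly from the definition of $\operatorname{K}(S)$, writing $f=\operatorname{f}(S)$ and $K=\operatorname{K}(S)$. Recall that $k\in K$ if and only if $f-k\notin S$. Thus, for $i\in I$, the condition $f+z-i\in K$ is equivalent to $f-(f+z-i)=i-z\notin S$, that is, to $i\notin z+S$. This single observation drives both directions.

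For the implication ($\Rightarrow$), suppose $I\not\subseteq z+S$. We pick $i\in I$ with $i\notin z+S$, so that $i-z\notin S$. We then set $k=f+z-i$. Since $f-k=i-z\notin S$, we get $k\in K$. Hence $f+z=i+k\in I+K$.

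For the implication ($\Leftarrow$), suppose $f+z=i+k$ with $i\in I$ and $k\in K$. Then $k=f+z-i$. The condition $k\in K$ says that $f-k=i-z\notin S$, so $i\notin z+S$. Since $i\in I$, this gives $I\not\subseteq z+S$.

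Both directions are the same equivalence read forwards and backwards. The only point needing care is that the sum $I+K$ is taken elementwise, so membership of $f+z$ in $I+K$ means exactly that there is a decomposition $f+z=i+k$ with $i\in I$ and $k\in K$. There is no genuine obstacle beyond unwinding this definition correctly.
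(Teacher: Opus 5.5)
Your proposal is correct and follows essentially the same argument as the paper: in both directions you take $k=\operatorname{f}(S)+z-i$ and use the definition of $\operatorname{K}(S)$ to see that $k\in \operatorname{K}(S)$ exactly when $i-z\notin S$. That equivalence is all the lemma requires.
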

\begin{proof}
If $I\not\subseteq z+S$, then there exists $i\in I$ such that $i-z\not\in S$. By the definition of $\operatorname{K}(S)$, we have that $\operatorname{f}(S)-(i-z)\in \operatorname{K}(S)$ and so $\operatorname{f}(S)+z=i+k$ for some $k\in \operatorname{K}(S)$.

For the other implication, if $\operatorname{f}(S)+z=i+k$ for some $i\in I$ and $k\in K$, then $\operatorname{f}(S)-(i-z)\in \operatorname{K}(S)$, which by the definition of the canonical ideal of $S$ translates to $i-z\not\in S$, and this is equivalent to $i\not\in z+S$. Thus, $I\not\subseteq z+S$.
\end{proof}



By a theorem of Jäger (see, for instance \cite[Theorem~3]{ns-app}), for every ideal $I$ of $S$,
\begin{equation}\label{eq:prop-canonical}
\operatorname{K}(S)-(\operatorname{K}(S)-I)=I
\end{equation}
(and this property characterizes $\operatorname{K}(S)$ up to translations).

The next result offers an alternate way of computing $I^v$ in terms of the canonical ideal of $S$.

\begin{lemma}\label{lem:calculate-I*-with-K}
    Let $S$ be a numerical semigroup. Then, 
    \[I^v=(\operatorname{K}(S)+I)-\operatorname{K}(S).\]    
\end{lemma}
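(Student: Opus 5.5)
Writing $K=\operatorname{K}(S)$, the plan is to rewrite $S$ as a colon ideal of $K$ and then let the identities (D~5) and Jäger's duality \eqref{eq:prop-canonical} do the work. Concretely, I will use $S=K-K$, which is \cite[Lemma~14]{ns-app}; this fact was already used in Remark~\ref{rem:on-lenght-decomposition-principal-ideals}. It can also be read off from \eqref{eq:prop-canonical} with $I=S$, since $K-S=K$ gives $S=K-(K-S)=K-K$.

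\textbf{Step 1: compute $S-I$.} Substitute $S=K-K$ and apply (D~5), which says $(K-K)-I=K-(K+I)$. Hence
\[
S-I=K-(K+I).
\]
This is the same computation already carried out in Remark~\ref{rem:on-lenght-decomposition-principal-ideals}.

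\textbf{Step 2: compute $I^v=S-(S-I)$.} Replace the outer $S$ by $K-K$ again and use (D~5) once more:
\[
S-(S-I)=(K-K)-(S-I)=K-\bigl(K+(S-I)\bigr).
\]
The task is now to show $K-\bigl(K+(S-I)\bigr)=(K+I)-K$. For this I will use (D~5) in reverse and then duality:
\[
K-\bigl(K+(S-I)\bigr)=K-\bigl((S-I)+K\bigr)=\bigl(K-(S-I)\bigr)-K.
\]
By Step 1, $K-(S-I)=K-\bigl(K-(K+I)\bigr)$. Applying \eqref{eq:prop-canonical} to the ideal $K+I$ gives $K-\bigl(K-(K+I)\bigr)=K+I$. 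Therefore
\[
I^v=(K+I)-K.
\]

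The main point needing care is the commutativity used to swap $K+(S-I)$ into $(S-I)+K$, together with applying Jäger's theorem to $K+I$. Jäger's theorem is stated for every ideal, and $K+I$ is an ideal of $S$, so this application is legitimate. Everything else is formal bookkeeping with (D~5).
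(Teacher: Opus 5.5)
Your proof is correct and is essentially the paper's argument: the paper runs the same chain of equalities in the opposite direction, starting from $(K+I)-K$, rewriting $K+I$ as $K-(K-(K+I))$ via \eqref{eq:prop-canonical}, and then using \ref{D-5} three times together with $K-K=S$ to reach $S-(S-I)$. Your remark that $K-K=S$ follows from \eqref{eq:prop-canonical} applied to $I=S$ (since $K-S=K$) is a nice self-contained substitute for the citation of \cite[Lemma~14]{ns-app} that the paper relies on.
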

\begin{proof}
Let $K=\operatorname{K}(S)$.  Notice that
  \begin{align*}
        (K+I)-K &= (K-(K-(K+I)))-K  && \text{by \eqref{eq:prop-canonical}}\\
        &= K-((K-(K+I))+K) && \text{by \ref{D-5}}\\
        &=K-(((K-K)-I)+K) && \text{by \ref{D-5}}\\
        &=(K-K)-((K-K)-I) && \text{by \ref{D-5}}\\
        &=S-(S-I)\,.
    \end{align*}
We are using that $K-K=S$, which follows by applying \cite[Lemma~14]{ns-app} to $K$.
\end{proof}

With this it is easy to show that $I^v$ is always between $I$ and $I+\operatorname{K}(S)$.

\begin{proposition}\label{prop:bounding-star-canonical}
Let $S$ be a numerical semigroup and let $I$ be an ideal of $S$. Then,
\[
I^v\subseteq I+\operatorname{K}(S).
\]
\end{proposition}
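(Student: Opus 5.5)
We prove $I^v\subseteq I+\operatorname{K}(S)$ via Lemma~\ref{lem:calculate-I*-with-K}. Write $K=\operatorname{K}(S)$. By that lemma, $I^v=(K+I)-K$, so it suffices to show $(K+I)-K\subseteq K+I$. The key observation is that $0\in K$: indeed $\operatorname{f}(S)-0=\operatorname{f}(S)\notin S$, so $0\in K$ by the definition of the canonical ideal. (Equivalently, $S\subseteq K$, as already noted after the definition of $\operatorname{K}(S)$.)

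Now take $x\in (K+I)-K$. By definition of the difference of ideals, $x+K\subseteq K+I$. Since $0\in K$, we get $x=x+0\in K+I$. Hence $(K+I)-K\subseteq K+I$, and therefore $I^v\subseteq I+K$.

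There is essentially no obstacle here. The only nontrivial input is Lemma~\ref{lem:calculate-I*-with-K}, which rests on Jäger's duality \eqref{eq:prop-canonical} and $K-K=S$, and that is assumed. As an alternative route, we could use Lemma~\ref{lem:I-not-included-principal-ideal} together with Corollary~\ref{cor:char-ref}. If $x\notin I+K$, set $z=x-\operatorname{f}(S)$. Then $\operatorname{f}(S)+z\notin I+K$, so $I\subseteq z+S$, while $x=\operatorname{f}(S)+z\notin z+S$. Thus $x$ lies outside a principal ideal containing $I$, hence $x\notin I^v$. We would present the first argument, since it is a one-line consequence of the preceding lemma.
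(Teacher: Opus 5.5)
Your main argument is correct and is essentially the paper's proof. The paper also invokes $I^v=(\operatorname{K}(S)+I)-\operatorname{K}(S)$ and then writes $I^v=I^v+0\subseteq I^v+\operatorname{K}(S)\subseteq \operatorname{K}(S)+I$ via~\ref{D-6}, which is exactly your elementwise use of $0\in\operatorname{K}(S)$. Your alternative route also works, but for non-reflexive $I$ you should justify $I^v\subseteq z+S$ by monotonicity of ${}^v$ and reflexivity of principal ideals (or by the formula $I^v=\bigcap_{y\in S-I}(-y+S)$), rather than by the corollary, whose ``in particular'' clause is stated for reflexive $I$.
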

\begin{proof}
This follows from Lemma~\ref{lem:calculate-I*-with-K} and~\ref{D-6}:
\[
I^v =I^v+0\subseteq I^v+\operatorname{K}(S) \subseteq \operatorname{K}(S)+I.\qedhere
\]
\end{proof}

Let $S$ and $T$ be numerical semigroups, with $S\subseteq T$ ($T$ is an over-semigroup of $S$). If $I$ is an ideal of $T$, then $I=I+
0\subseteq I+S\subseteq I+T=I$, implying that $I$ is an ideal of $S$. That is,
\[
S\subseteq T \text{ implies } \mathfrak{I}(T)\subseteq \mathfrak{I}(S).
\]

The next theorem states that every ideal of an over-semigroup containing the canonical ideal of $S$ is reflexive.

\begin{theorem}[{\cite[Theorem~(A)]{vb-slides}}]\label{thm:A-Valentina}
    Let $S$ be a numerical semigroup and let $T$ be an over-semigroup of $S$. The following are equivalent.
    \begin{enumerate}
        \item $\operatorname{K}(S)\subseteq T$.
        \item Every ideal of $T$ is a reflexive ideal of $S$.
    \end{enumerate}
\end{theorem}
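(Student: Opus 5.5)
We prove the two implications separately, writing $K=\operatorname{K}(S)$ throughout.

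For (1)$\Rightarrow$(2), assume $K\subseteq T$ and let $I$ be an ideal of $T$; it is then also an ideal of $S$. By Proposition~\ref{prop:bounding-star-canonical}, $I^v\subseteq I+K$. Since $K\subseteq T$ and $I+T=I$, we get $I+K\subseteq I+T=I$. Combined with $I\subseteq I^v$ (Proposition~\ref{prop: star basic properties}(2)), this gives $I^v=I$, so $I$ is reflexive.

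For (2)$\Rightarrow$(1), the plan is to apply the hypothesis to the ideal $T$ of $T$ itself, so $T$ is a reflexive ideal of $S$. By Corollary~\ref{cor:char-ref}, $T$ is then the intersection of all principal ideals $z+S$ of $S$ containing $T$. It therefore suffices to show that $K\subseteq z+S$ whenever $T\subseteq z+S$.

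So fix $z$ with $T\subseteq z+S$. Since $0\in T$, we have $-z\in S$, that is, $z\le 0$ and $-z\in S$. Suppose, for a contradiction, that $K\not\subseteq z+S$. By Lemma~\ref{lem:I-not-included-principal-ideal} applied to the ideal $K$, this means $\operatorname{f}(S)+z\in K+K$. The key difficulty is to turn this into a contradiction with $T\subseteq z+S$. The approach I would follow avoids $K+K$ and works directly with the definition of $K$. Take $k\in K$ with $k-z\notin S$. Because $k\in K$, we also have $\operatorname{f}(S)-k\notin S$. I would then exploit the fact that $T\subseteq z+S$ forces $T+(-z)\subseteq S$, so that $-z\in S-T$.

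Along this route there is a cleaner argument. First, $T\subseteq z+S$ with $T$ a semigroup gives $T+T\subseteq T\subseteq z+S$, hence $-z+T\subseteq S$. Next, $K\subseteq T$ should follow from $-z+T\subseteq S$ once we know that $-z+K\subseteq S$ implies $K\subseteq z+S$, which is immediate. The real content is therefore the implication $-z\in S-T\Rightarrow -z\in S-K$. For that we need $S-T\subseteq S-K$. By Lemma~\ref{lem:calculate-I*-with-K} and Jäger's duality~\eqref{eq:prop-canonical}, $S-T$ should equal $K-(K+T)$ up to the appropriate identifications. Here I would use $T+K\supseteq K$ and $T+K$ an ideal of $T$, hence reflexive by hypothesis, to compare $S-(T+K)$ with $S-T$.

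I expect this comparison step to be the main obstacle. If it resists, a fallback is to use hypothesis (2) for the ideal $K+T$ of $T$: reflexivity gives $K+T=(K+T)^v$. Computing $(K+T)^v$ via Lemma~\ref{lem:calculate-I*-with-K} as $(K+K+T)-K$, one should then show that $0\in K+T$ forces $K+T=T$, again via the principal-ideal intersection description of Corollary~\ref{cor:char-ref} and Lemma~\ref{lem:I-not-included-principal-ideal}.
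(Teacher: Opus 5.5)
Your proof of (1)$\Rightarrow$(2) is correct and is the paper's argument. The converse has a genuine gap. Your main line applies hypothesis (2) only to $T$ itself, and then tries to deduce $K\subseteq z+S$ from $T\subseteq z+S$. Reflexivity of $T$ alone can never give this. Take $T=S$ with $S$ non-symmetric: $S$ is always reflexive (Example~\ref{ex:S-and-N}) and $S\subseteq 0+S$, yet $\operatorname{K}(S)\not\subseteq S$.

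Your ``cleaner argument'' does not get around this. For reflexive $T$, the inclusion $S-T\subseteq S-K$ is equivalent to $K\subseteq T$: apply $S-(\cdot)$ and use $K\subseteq K^v$. So that step just restates the goal. The fallback fails too, because its key claim is false. Take $S=\langle 3,4,5\rangle$, so $\operatorname{K}(S)=\mathbb{N}\setminus\{2\}$, and $T=\langle 2,3\rangle$. Then $K+T=\mathbb{N}$, which is reflexive and contains $0$, but $K+T\neq T$ since $1\in K\setminus T$.

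The missing idea is to apply (2) to an ideal of $T$ built from a witness $x\in K\setminus T$, namely $U=T\cup(x+1+\mathbb{N})$. It is an ideal of $T$, being a union of two ideals of $T$, it satisfies $\operatorname{f}(U)=x$, and $x\notin U$. Suppose $U\subseteq z+S$. Then $z+\operatorname{f}(S)=\operatorname{f}(z+S)\le x$. Equality is impossible: since $0\in U\subseteq z+S$, it would give $\operatorname{f}(S)-x=-z\in S$, contradicting $x\in K$. Hence $z+\operatorname{f}(S)<x$, so $x\in z+S$. Thus $x$ lies in every principal ideal containing $U$. By Corollary~\ref{cor:char-ref} (equivalently, the proof of Proposition~\ref{prop:expr-I-star}), this gives $x\in U^v\setminus U$, so $U$ is a non-reflexive ideal of $T$.
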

\begin{proof}
    Suppose that $\operatorname{K}(S)\subseteq T$ and let $I$ be an ideal of $T$. By Proposition~\ref{prop:bounding-star-canonical}, $I^v\subseteq I+\operatorname{K}(S)\subseteq I+T=I$, which means that $I$ is reflexive.

    The other implication requires a bit more effort. Suppose on the contrary that $\operatorname{K}(S)\not\subseteq T$, and let $U=T\cup (x+1+\mathbb{N})$, with $x\in \operatorname{K}(S)\setminus T$; thus, $\operatorname{f}(S)-x\not\in S$. Notice that $U$ is the union of two ideals of $T$, and so it is an ideal of $S$. We show that $U$ is not reflexive. 
    
    Suppose to the contrary that $U$ is reflexive. Then by Corollary~\ref{cor:char-ref}, $U$ is the intersection of all principal ideals containing $U$. Let $z$ be an integer such that $U\subseteq z+S$. Then, $z+\operatorname{f}(S)=\operatorname{f}(z+S)\le \operatorname{f}(U)=x$. If $x= z+\operatorname{f}(S)$, then $0=z+\operatorname{f}(S)-x\in z+S$ (since $0\in U\subseteq z+S$). But then $\operatorname{f}(S)-x\in S$ in contradiction with the choice of $x$. Thus, $z+\operatorname{f}(S)<x$, and so $x\in z+S$. This, in particular, implies that $x$ is in the intersection of all principal ideals containing $U$ and consequently in $U^v$; whence, $U$ is not reflexive.
\end{proof}

This last result can be seen as a generalization of the fact that if $S$ is symmetric, then all its ideals are reflexive. Notice that there might be reflexive ideals of $S$ that are not ideals of an over-semigroup of $S$ containing its canonical ideal. 

\begin{example}
    Let $S=\langle 5,11,19,23 \rangle$ and let $K$ be its canonical ideal. The Frobenius number of $S$ is $18$, and $18-1=17\not\in S$. This means that $1\in K$, and so every over-semigroup containing $K$ must contain $1$, which forces this over-semigroup to be $\mathbb{N}$. However, $I=\{ 0, 14, 17, 18 \}+S$ is reflexive and clearly it is not an ideal of $\mathbb{N}$.
\end{example}


If $J$ and $I$ are ideals of a numerical semigroup $S$, we say that $I$ is $J$-{\emph reflexive} if $J-(J-I)=I$. The following result can be understood as a sort of transitiveness of the property of being reflexive.

\begin{lemma}\label{lemma: reflexivity}
Let $S$ be a numerical semigroup and let $I,J$ be ideals of $S$. If $J-(J-I)=I$ and $S-(S-J)=J$, then $S-(S-I)=I$.
\end{lemma}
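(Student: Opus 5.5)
Since $J$ is reflexive, Corollary~\ref{cor:char-ref} (or Proposition~\ref{prop:expr-I-star}) lets us write $J=\bigcap_{y\in S-J}(-y+S)$. The plan is to use this to show that $I$ is an intersection of principal ideals of $S$. Corollary~\ref{cor:char-ref} then gives that $I$ is reflexive.

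First, I express $I$ as an intersection of translates of $J$. The hypothesis $I=J-(J-I)$ means that $x\in I$ if and only if $x+w\in J$ for all $w\in J-I$. This says $x\in -w+J$ for all such $w$, so
\[
I=\bigcap_{w\in J-I}(-w+J).
\]
This is the same argument as in the proof of Proposition~\ref{prop:expr-I-star}, with $S$ replaced by $J$.

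Second, each translate $-w+J$ is reflexive. This follows from Proposition~\ref{prop: star basic properties}(6), because $J$ is reflexive. Concretely,
\[
-w+J=\bigcap_{y\in S-J}(-w-y+S).
\]
Substituting into the previous display writes $I$ as an intersection of principal ideals $-(w+y)+S$, with $w\in J-I$ and $y\in S-J$.

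The only point needing care is that Corollary~\ref{cor:char-ref} and Corollary~\ref{cor:int-irred} are stated for finite intersections, whereas here the families are infinite. This is handled by passing to minimal generators. Replacing $w$ by elements of $\operatorname{Minimals}_{\le_S}(J-I)$ and $y$ by elements of $\operatorname{Minimals}_{\le_S}(S-J)$ leaves each intersection unchanged. The reason is that $w\le_S w'$ gives $-w'+J\supseteq -w+J$, exactly as in the proof of Proposition~\ref{prop:expr-I-star}. These sets of minimal generators are finite, so $I$ is a finite intersection of principal ideals. Applying Corollary~\ref{cor:int-irred} repeatedly then shows that $I$ is reflexive.

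Alternatively, the statement follows directly from the operations (D1)–(D8). Since $J^v=J$, every $w\in J-I$ satisfies $(-w+J)^v=-w+J$, and $I\subseteq -w+J$. Monotonicity (Proposition~\ref{prop: star basic properties}(1)) then gives $I^v\subseteq -w+J$ for every such $w$. Hence $I^v\subseteq\bigcap_{w\in J-I}(-w+J)=I$, and the reverse inclusion is Proposition~\ref{prop: star basic properties}(2). This version avoids the finiteness issue entirely, so I would present it as the main proof.
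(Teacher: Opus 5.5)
Your proof is correct, and your main (second) argument takes a somewhat different route from the paper's.

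\textbf{The paper's route.} The paper works purely with colon arithmetic. It first checks that $(S-J)+(J-I)\subseteq S-I$. It then rewrites $I=J-(J-I)=(S-(S-J))-(J-I)=S-((S-J)+(J-I))$ using~\ref{D-5}, and concludes $I\supseteq S-(S-I)$ by~\ref{D-2}.

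\textbf{Your route.} You use only that $^v$ is extensive, monotone and compatible with translations (Proposition~\ref{prop: star basic properties}(1),(2),(6)). You combine this with the description $J-(J-I)=\bigcap_{w\in J-I}(-w+J)$. Each $-w+J$ is a closed ideal containing $I$, so it contains $I^v$.

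\textbf{Comparison.} Once the definitions are unwound, both arguments rest on the same fact: $t\in S-J$ and $w\in J-I$ give $t+w\in S-I$. Your packaging buys two things:
\begin{itemize}
\item The lemma holds verbatim for any star operation in place of $^v$: if $J=J^*$, then every $J$-reflexive ideal is $*$-closed.
\item It makes explicit that $J$-reflexive ideals are exactly the intersections of translates of $J$, in the spirit of Corollary~\ref{cor:char-ref}.
\end{itemize}
The paper's version is a shorter, self-contained computation that mirrors the ring-theoretic argument via composition of homomorphisms.

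Your first approach, through intersections of principal ideals, is also valid. The reduction to the finite sets of minimal generators of $J-I$ and $S-J$ correctly handles the finiteness issue. As you observed, though, that approach is unnecessary once the direct argument is available.
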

\begin{proof}
Notice that $(S-J)+(J-I)\subseteq S-I$, since if $t \in S-J$, then $t+J \subseteq S$ and  by~\ref{D-1} and~\ref{D-2}, $t+(J-I)=(t+J)-I \subseteq S-I$. We have that
\begin{align*}
    I &=J-(J-I) && \text{(hypothesis)}\\
    &=(S-(S-J))-(J-I) && \text{(hypothesis)}\\
    &=S-((S-J)+(J-I)) && \text{(by }\ref{D-5}\text{)}\\
    & \supseteq S-(S-I),  && \text{(by }\ref{D-2}\text{)}
\end{align*}
which is equivalent to the reflexivity of $I$.
\end{proof}


Reflexive ideals of a numerical semigroup sharing the same conductor as the ambient semigroup are always integral, as wee see next. We will make use of this fact several times later.

\begin{lemma}\label{lem:I tilde in S}
Let $S$ be a numerical semigroup. If $I$ is a reflexive ideal of $S$ such that $\operatorname{c}(I)=\operatorname{c}(S)$, then $I \subseteq S$.
\end{lemma}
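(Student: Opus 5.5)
We use the description of reflexive ideals as intersections of principal ideals (Corollary~\ref{cor:char-ref}). Since $I$ is reflexive, $I=\bigcap_{z\in S-I}(-z+S)$, so it suffices to find some $z\in S-I$ with $-z+S\subseteq S$, that is, $z\in S$. The natural candidate is $z=0$, i.e. to show $0\in S-I$, which is exactly $I\subseteq S$. We argue directly by contradiction using the Frobenius numbers instead.

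Set $c=\operatorname{c}(S)=\operatorname{c}(I)$, so $\operatorname{f}(I)=\operatorname{f}(S)=c-1$; in particular $c-1\notin I$. Suppose first that $-z+S$ is any principal ideal containing $I$. Since $I\subseteq -z+S$, every integer not in $-z+S$ is not in $I$, so $\operatorname{f}(-z+S)=-z+\operatorname{f}(S)\le \operatorname{f}(I)=\operatorname{f}(S)$, giving $z\ge 0$. The key step is to show that some $z\in S-I$ satisfies $z=0$. If instead every $z\in S-I$ were positive, then for each such $z$ we would have $-z+\operatorname{f}(S)<\operatorname{f}(S)$. We need the opposite inequality: $\operatorname{f}(S)\in -z+S$ for all $z\in S-I$. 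Indeed, $\operatorname{f}(S)+z>\operatorname{f}(S)$ for $z>0$, hence $\operatorname{f}(S)+z\in S$, i.e. $\operatorname{f}(S)\in -z+S$. Intersecting over all $z\in S-I$ yields $\operatorname{f}(S)\in I^v=I$, contradicting $\operatorname{f}(S)=\operatorname{f}(I)\notin I$. This is the same trick as in Proposition~\ref{prop:p ideals}.

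Hence some $z\in S-I$ is $\le 0$. Combined with $z\ge0$ from the Frobenius comparison (applied to $-z+S\supseteq I$), this gives $z=0$. Therefore $0\in S-I$, i.e. $I=0+I\subseteq S$. The only delicate point is keeping the sign conventions straight: the principal ideals containing $I$ are the sets $-z+S$ with $z\in S-I$, as in the proof of Proposition~\ref{prop:expr-I-star}. Once that is fixed, the argument reduces to comparing Frobenius numbers.
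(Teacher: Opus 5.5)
Your proof is correct and is essentially the paper's argument. The paper gets $z\ge 0$ for every $z\in S-I$ from $S-I\subseteq S-(\operatorname{c}(S)+\mathbb{N})=\mathbb{N}$ rather than from your Frobenius-number comparison. It then shows that $\min(S-I)>0$ would give $\operatorname{f}(S)+(S-I)\subseteq S$, i.e.\ $\operatorname{f}(S)\in S-(S-I)=I$, which is your intersection-of-principal-ideals step written in different notation.
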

\begin{proof}
    Set $c=\operatorname{c}(S)$. As $\operatorname{c}(I)=c$, we have  $\operatorname{f}(S)=\operatorname{f}(I)$.
    
    Since $c+\mathbb{N}\subseteq I$, by~\ref{D-2} we have  $S-I \subseteq S-(c+\mathbb{N})=\mathbb{N}$ (by equation \eqref{eq:conductor-quotient} of Section 2); therefore $\min(S-I) \geq 0$. Observe that $I \subseteq S$ if and only if $0 \in S-I$. Suppose that $\min(S-I)>0$. Then,  $\operatorname{f}(S)+(S-I) \subseteq S$ which is equivalent to $\operatorname{f}(S) \in S-(S-I)=I$, which is a contradiction. It follows that $\min(S-I)=0$ and thus $I \subseteq S$.
\end{proof}

\begin{remark}\label{rem:K vs S}
In \cite[Proposition 20]{ns-app} it is shown that $\operatorname{K}(S)$ is the largest relative ideal of $S$ among those with conductor $\operatorname{c}(S)$. Using this fact, it is possible to prove that $\operatorname{K}(S)$
is the only irreducible relative ideal, up to translations (see 
\cite[Proposition 24]{ns-app}).

The previous lemma shows that,
if we restrict ourselves to the set of reflexive ideals, sharing the same conductor $\operatorname{c}(S)$, the semigroup $S$ itself is the largest. This is another way to 
see that $S$ is irreducible in the set of reflexive ideals, and in fact it it the only irreducible, up to translations
(cf. Proposition \ref{prop:p ideals}). So in the set of reflexive ideals $S$ plays the same role as $\operatorname K(S)$
in the set of all relative ideals. Note that, when $S$ is symmetric, all relative ideal are reflexive and, in fact, $\operatorname K(S)=S$.
\end{remark}


Let $S$ be a numerical semigroup and let $I$ be an ideal of $S$. We say that $I$ is \emph{normalized} if $\min(I)=0$. Notice that if $0\in I$, then $I+S=I$, implies that $S=0+S\subseteq I+S=I$. Thus, $I$ is normalized if and only if  $S\subseteq I\subseteq \mathbb{N}$. 


The set of normalized ideals of $S$ is denoted by $\Ni[S]$. The sum of two normalized ideals if again a normalized ideal, and so $(\Ni,+)$ is a monoid, which is isomorphic to the ideal class monoid of $S$ \cite{b-k, icm-apery}. This monoid completely determines $S$ (see \cite{isom}). 

\begin{remark}\label{rem:oversemigroup-is-ideal}
Notice that if $T$ is an over-semigroup of $S$, then $T\subseteq T+0\subseteq T+S\subseteq T+T=T$, and so $T+S=T$, meaning that $T$ is a normalized ideal of $S$. Actually, over-semigroups of $S$ are precisely those normalized ideals of $S$ that are idempotents (with respect to $+$).
\end{remark}

\begin{remark}\label{rem:I-I-oversemigroup}
If $I$ is an ideal of a numerical semigroup $S$, and $x,y\in \mathbb{Z}$ are such that $x+I\subseteq I$ and $y+I\subseteq I$, then $(x+y)+I=x+
(y+I)\subseteq x+I\subseteq I$, which means that $I-I$ is closed under addition. Also, $0=\min(I-I)$, and $\operatorname{c}(I)+\mathbb{N}\subseteq I-I$, which means that $I-I$ is itself a semigroup. As $S+I=I$, we get $S\subseteq I-I$, which results in $I-I$ being an over-semigroup of $S$ (and thus a normalized ideal of $S$).
\end{remark}

\begin{remark}\label{rem:S-T-ideal-T}
    Let $S$ be a numerical semigroup, and let $T$ be an over-semigroup of $S$. Then $S-T$ is an ideal of $T$. In fact, given $x\in S-T$ and $t\in T$, $x+t\in S-T$ if and only if for all $t'\in T$, $x+t+t'\in S$. But this follows from the fact that $t+t'\in T$ and $x\in S-T$.
\end{remark}







Observe that if $I\in \Ni[S]$, then $S\subseteq I\subseteq \mathbb{N}$. Hence, by Proposition~\ref{prop: star basic properties}, $S^v\subseteq I^v\subseteq \mathbb{N}^v$. As $S^v=S$ and $\mathbb{N}^v=\mathbb{N}$, we deduce that $S\subseteq I^v\subseteq \mathbb{N}$, which leads to $I^v\in \Ni[S]$. 

Notice also that the reflexive property is closed under translations. Thus, if we determine the set of normalized reflexive ideals, by translation, we determine the set of reflexive ideals of a numerical semigroup. 

In order to determine all normalized reflexive ideals, we will use a particular class of reflexive ideals, which we study in the next section.


\section{Integrally closed ideals}\label{sec:int-closed-ideals}

Let $S$ be a numerical semigroup and let $s\in S$. The set 
\[
S(s)=\{t\in S : t\ge s\}
\]
is an ideal of $S$. We say that an ideal $I$ of $S$ is \emph{integrally closed} if $I=S(s)$ for some $s\in S$.

Integrally closed ideals carry significant data about their ambient numerical semigroup. However, it is worth noting that these data can be accessed without studying the entire family of such ideals: in fact, two kinds can be distinguished. The ideals of the form $S(s)$, for some $s > \operatorname{c}(S)$, are all translations of $\operatorname{C}(S)$; for this reason, they are ``trivial'', in the sense that many of the properties that we study trivially hold for this family of ideals. The remaining ideals are much richer in structure and cannot be obtained as a translation of any other integrally closed ideals (see Remark~\ref{rem:i=j}). With this observation, we can reduce the study of integrally closed ideals to a finite number of them, which could be convenient also from a computational view point. In what follows, this distinction plays a role only in Remarks~\ref{rem:antichains} and \ref{rem:int closed -> trace}, and in Proposition~\ref{prop:phi is inj}.

Observe that $S(s)=S\cap(s+\mathbb{N})$.
As both $S$ and $s+\mathbb{N}$ are reflexive (Example~\ref{ex:S-and-N}), and the intersection of reflexive ideals is reflexive, we deduce that $S(s)$ is reflexive. Therefore, we also have the following result.

\begin{lemma}\label{lem:ic-implies-ref}
    Every translation of an integrally closed ideal of a numerical semigroup is reflexive.
\end{lemma}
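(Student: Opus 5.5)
The plan is to write an integrally closed ideal as an intersection of two ideals already known to be reflexive, and then pass to translates. Let $I$ be a translation of an integrally closed ideal, say $I=x+S(s)$ with $s\in S$ and $x\in\mathbb{Z}$. By item (6) of Proposition~\ref{prop: star basic properties}, $(x+J)^v=x+J^v$, so a translate of a reflexive ideal is reflexive. It therefore suffices to show that $S(s)$ itself is reflexive.

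For this I will use the identity $S(s)=S\cap(s+\mathbb{N})$, which is immediate from the definition $S(s)=\{t\in S: t\ge s\}$. Both pieces are reflexive:
\begin{itemize}
\item $S$ is reflexive by Example~\ref{ex:S-and-N}.
\item $\mathbb{N}$ is reflexive by the same example, and $s+\mathbb{N}$ is a translate of $\mathbb{N}$, so it is reflexive by item (6) again.
\end{itemize}
Note that $s+\mathbb{N}$ is an ideal of $S$, since $S\subseteq\mathbb{N}$ gives $(s+\mathbb{N})+S=s+\mathbb{N}$.

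Corollary~\ref{cor:int-irred} then shows that $S\cap(s+\mathbb{N})=S(s)$ is reflexive, which completes the argument.

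An alternative route uses Corollary~\ref{cor:char-ref}. Here $S$ is principal, and $\mathbb{N}$ is an intersection of principal ideals because it is reflexive. Hence $S(s)$ is an intersection of principal ideals, and so are its translates. I do not expect any real obstacle. The only point to check carefully is that translation commutes with the reflexive closure, and that is exactly item (6).
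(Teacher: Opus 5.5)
Your proof is correct and follows the paper's argument: you write $S(s)=S\cap(s+\mathbb{N})$, note that $S$ and $s+\mathbb{N}$ are reflexive (Example~\ref{ex:S-and-N} plus $(a+I)^v=a+I^v$), and apply Corollary~\ref{cor:int-irred}. You then pass to translates using the same translation property.
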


The next lemma is the analogue to \cite[Lemma 3.3]{CDS} for numerical semigroups. Recall that for an ideal $I$ of a numerical semigroup $S$, $\widetilde{I}=-\operatorname{c}(I)+\operatorname{c}(S)+I$. If, in addition, $I \in \mathfrak{I}_0(S)$ and $\widetilde I \subseteq S$, denote by $\bar{I}$ the unique integrally closed ideal of $S$ such that $\min(\widetilde{I})=\min(\bar{I})$, that is, 
\[
\bar{I}=S(\min(\widetilde{I})).
\]
Note that Lemma \ref{lem:I tilde in S} ensures that if $I$ is reflexive, then $\widetilde I \subseteq S$.

\begin{lemma}\label{lem:s-i=bar i-i}
    If $I \in \mathfrak{I}_0(S)$ and $\widetilde I \subseteq S$, then $S-\widetilde{I}=J-\widetilde{I}$ and $S-I=J-I$ for every integrally closed ideal $J$ containing $\bar{I}$.
\end{lemma}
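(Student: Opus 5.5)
The idea is to prove both equalities by double inclusion, with the inclusion $J-\widetilde I\subseteq S-\widetilde I$ doing all the work. Then $S-I=J-I$ follows by translation via \ref{D-1}.

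Set $c=\operatorname{c}(S)$ and $m=\min(\widetilde I)$. Since $\widetilde I\subseteq S$ and $\min(\widetilde I)=m$, we have $\widetilde I\subseteq S(m)=\bar I\subseteq J\subseteq S$.
\begin{itemize}
\item \emph{Easy inclusion.} Since $J\subseteq S$, \ref{D-2} gives $J-\widetilde I\subseteq S-\widetilde I$.
\item \emph{Setup for the reverse inclusion.} Let $x\in S-\widetilde I$; we must show $x+\widetilde I\subseteq J$. Write $J=S(s)$ with $s\le m$; this is possible because $\bar I\subseteq J$ forces $\min J\le m$. It then suffices to show $x+i\ge s$ for every $i\in\widetilde I$, since $x+i\in S$ already.
\item \emph{Key claim.} Because $i\ge m\ge s$, the inequality $x+i\ge s$ follows once $x\ge 0$. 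So the whole proof reduces to showing $S-\widetilde I\subseteq\mathbb N$.
\end{itemize}

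This key step uses the hypothesis $I\in\mathfrak I_0(S)$, which gives $I\subseteq\mathbb N$ and $\operatorname{c}(I)\le c$. I first note that $\widetilde I$ shares the conductor with $S$, so $c+\mathbb N\subseteq\widetilde I$. Applying \ref{D-2} and \eqref{eq:conductor-quotient}, as in the proof of Lemma~\ref{lem:I tilde in S},
\[
S-\widetilde I\subseteq S-(c+\mathbb N)=\mathbb N .
\]
This closes the argument for $\widetilde I$. It does not even need $x+\widetilde I\subseteq S$ beyond membership. One should double check that $m\ge 0$, but this holds since $\widetilde I\subseteq S$.

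For the second equality, let $d=c-\operatorname{c}(I)\ge 0$, so $\widetilde I=d+I$. By \ref{D-1}, $S-I=d+(S-\widetilde I)$ and $J-I=d+(J-\widetilde I)$, so the first equality transfers directly.

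The only real obstacle is to avoid needing reflexivity. We cannot use Lemma~\ref{lem:I tilde in S}, because reflexivity is not assumed here; that is precisely why $\widetilde I\subseteq S$ is a hypothesis. The bound $S-\widetilde I\subseteq\mathbb N$ comes from the conductor alone, so this causes no trouble.
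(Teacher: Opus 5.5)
Your proof is correct and is essentially the paper's argument. Both proofs get the easy inclusion from $J\subseteq S$, then use $S-\widetilde I\subseteq S-\operatorname{C}(S)=\mathbb{N}$ so that $x+i\ge\min(\widetilde I)=\min(\bar I)\ge\min(J)$ forces $x+i\in J$, and transfer the result to $I$ by translation. One cosmetic point: $\operatorname{c}(S)+\mathbb{N}\subseteq\widetilde I$ holds for every ideal by the definition of $\widetilde I$, so the key step does not actually rely on the hypothesis $I\in\mathfrak{I}_0(S)$.
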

\begin{proof}
First, notice that by~\ref{D-1}, if $S-\widetilde{I}=J-\widetilde{I}$, then $S-I=J-I$. So we focus on proving that $S-\widetilde{I}=J-\widetilde{I}$ holds for every integrally closed ideal containing $\bar{I}$.

The inclusion $S-\widetilde{I} \supseteq J-\widetilde{I}$ is clear by~\ref{D-2}, since $J\subseteq S$. For the other inclusion, notice that by definition, $\widetilde{I}+(S-\widetilde{I}) \subseteq S$. From $\operatorname{c}(S)+\mathbb{N}\subseteq \widetilde{I}$, by using~\ref{D-2} we obtain $S-\widetilde{I} \subseteq S-\operatorname{C}(S)=\mathbb{N}$ (by~\eqref{eq:conductor-quotient}), which forces $\min(S-\widetilde{I})\ge 0$. Thus, $\min(\widetilde{I})+\min(S-\widetilde{I}) \geq \min(\widetilde{I})=\min(\bar{I})\ge \min(J)$, which together with $\widetilde{I}+(S-\widetilde{I}) \subseteq S$ and the fact that $J$ is integrally closed, implies that  $\widetilde{I}+(S-\widetilde{I}) \subseteq J$. Hence by~\ref{D-6}, $(S-\widetilde{I}) \subseteq J-\widetilde{I}$, and this concludes the proof.
\end{proof}

Recall that if $J$ and $I$ are two ideals of $S$, we  say that $I$ is $J$-{\emph reflexive} if $J-(J-I)=I$. With this terminology, 
the following result is a translation of \cite[Proposition~4.5]{CDS} to numerical semigroups. 
\begin{proposition}\label{prop: J-reflexivity}
    Let $S$ be a numerical semigroup, and let $I$ be an ideal of $S$. The following conditions are equivalent: 
    \begin{itemize}
        \item $I$ is reflexive,
        \item $\widetilde{I} \subseteq S$ and $I$ is $J$-reflexive for every integrally closed ideal $J$ such that $\widetilde{I} \subseteq J$.
    \end{itemize} 
    In particular, if $I$ is a non-principal reflexive ideal of $S$ we have that $\widetilde{I} \subseteq M$, and thus $I$ is $M$-reflexive, where $M$ is the maximal ideal of $S$.
\end{proposition}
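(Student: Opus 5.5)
The plan is to reduce everything to $\widetilde{I}$. Both reflexivity and $J$-reflexivity are invariant under translation: by Proposition~\ref{prop: star basic properties}(6) for the first, and by applying \ref{D-1} twice for the second, since $J-(J-(x+I))=J-(-x+(J-I))=x+(J-(J-I))$. Moreover $\widetilde{I}$ has conductor $\operatorname{c}(S)$. So I will prove the equivalence for $\widetilde{I}$ and transfer it back to $I$.

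\emph{Reflexive implies the second condition.} If $I$ is reflexive, so is $\widetilde{I}$, and $\operatorname{c}(\widetilde{I})=\operatorname{c}(S)$. Lemma~\ref{lem:I tilde in S} then gives $\widetilde{I}\subseteq S$. Now let $J=S(s)$ be integrally closed with $\widetilde{I}\subseteq J$. Then $s\le \min(\widetilde{I})$, so $J\supseteq S(\min(\widetilde{I}))=\bar{I}$.

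Lemma~\ref{lem:s-i=bar i-i} gives $J-\widetilde{I}=S-\widetilde{I}$. It is stated for normalized ideals, so I apply it to the normalized translate $-\min(I)+I$, which has the same $\widetilde{\ }$. Alternatively, I would note that its proof uses only $\operatorname{C}(S)\subseteq\widetilde{I}\subseteq S$.

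The inclusion $\widetilde{I}\subseteq J-(J-\widetilde{I})$ is \ref{D-7}. For the reverse inclusion, since $J\subseteq S$, \ref{D-2} gives
\[
J-(J-\widetilde{I})=J-(S-\widetilde{I})\subseteq S-(S-\widetilde{I})=\widetilde{I}.
\]
Hence $\widetilde{I}$ is $J$-reflexive, and by translation so is $I$. The one point needing care is this application of Lemma~\ref{lem:s-i=bar i-i}: checking that $J\supseteq\bar I$ and that the normalization hypothesis is harmless.

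\emph{The second condition implies reflexive.} This direction is immediate. $S=S(0)$ is integrally closed and contains $\widetilde{I}$ by hypothesis. So $I$ is $S$-reflexive, that is, $S-(S-I)=I$. (Equivalently, one can invoke Lemma~\ref{lemma: reflexivity} with $J=S$.)

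\emph{The final claim.} Let $I$ be reflexive and non-principal. We already know $\widetilde{I}\subseteq S$. If $0\in\widetilde{I}$, then $S\subseteq\widetilde{I}\subseteq S$, so $\widetilde{I}=S$ and $I$ would be principal, a contradiction. Hence $\widetilde{I}\subseteq S\setminus\{0\}=M$.

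Since $M=S(\operatorname{m}(S))$ is integrally closed, the equivalence just proved shows that $I$ is $M$-reflexive.
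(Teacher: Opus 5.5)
Your proof is correct. The forward direction is the paper's argument: $\widetilde{I}\subseteq S$ from Lemma~\ref{lem:I tilde in S}, then $J-\widetilde{I}=S-\widetilde{I}$ from Lemma~\ref{lem:s-i=bar i-i}, then \ref{D-2}. You are actually more careful than the paper here, because you verify two things it leaves implicit: that an integrally closed $J\supseteq\widetilde{I}$ contains $\bar{I}$, and that the normalization hypothesis of Lemma~\ref{lem:s-i=bar i-i} does no harm.

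The converse is where you differ. You observe that $S=S(0)$ is itself one of the integrally closed ideals in the quantifier. So the second condition contains $S$-reflexivity verbatim, and the implication is tautological. The paper instead fixes an arbitrary integrally closed $J\supseteq\widetilde{I}$ and combines the reflexivity of $J$ (Lemma~\ref{lem:ic-implies-ref}) with the transitivity Lemma~\ref{lemma: reflexivity}. This costs a little more but proves more: $J$-reflexivity for a single such $J$, indeed for any single reflexive $J$, already forces reflexivity. That stronger form is what makes the equivalence informative rather than trivial.

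Finally, in the last claim you get $\min(\widetilde{I})\ge 0$ for free from $\widetilde{I}\subseteq S$. This avoids the paper's detour through $\widetilde{I}\subseteq\operatorname{K}(S)$.
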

\begin{proof}
    Suppose that $I$ is reflexive. By Lemma~\ref{lem:I tilde in S} we have $\widetilde{I} \subseteq S$ and therefore we can apply Lemma~\ref{lem:s-i=bar i-i}, 
    which yields $S-I=J-I$, so $J-(J-I) = J-(S-I) \subseteq S-(S-I)=I$.

    Now, fix an integrally closed ideal $J$ such that $\widetilde{I} \subseteq J$. Assume that $I$ is $J$-reflexive and notice that by Lemma~\ref{lem:ic-implies-ref} we have that $J$ is reflexive; explicitly, one has $J-(J-I)=I$ and $S-(S-J)=J$, and thus the reflexivity of $I$ follows from Lemma~\ref{lemma: reflexivity}. 

    Finally, let us prove the second part of the statement. By \cite[Proposition~20]{ns-app}, if $I$ is an ideal of $S$, then $\widetilde{I}\subseteq \operatorname{K}(S)$. 
    In particular, $\min(\widetilde{I})\ge 0$. Notice that if this inequality is an equality, then $S\subseteq \widetilde{I}$. If, in addition, $I$ is reflexive, then by Lemma~\ref{lem:I tilde in S}, $\widetilde{I}\subseteq S$, and so $S=\widetilde{I}$. Thus, if $I$ is reflexive and $\min(\widetilde{I})=0$, then $I$ must be a principal ideal. Since we are assuming that $I$ is not principal, $\min{\widetilde{I}}>0$ and so $\widetilde{I}\subseteq M$. As $M=S(\min(M))=S(\operatorname{m}(S))$ is integrally closed, we conclude that $I$ is $M$-reflexive.
\end{proof}


Recall that if $I$ is an ideal of $S$, then $I-I$ is an over-semigroup of $S$. We see next that $I-I$ is also an over-semigroup of $J-J$ for every integrally closed ideal $J$ of $S$ containing $\bar{I}$. 

\begin{proposition}\label{prop:s-s in e-e}
    If $I \in \mathfrak{I}_0(S)$ is reflexive, then $J-J\subseteq I-I$ for every integrally closed ideal $J$ such that $ \bar{I} \subseteq J$.
\end{proposition}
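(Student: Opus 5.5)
Since $I$ is reflexive, we can write $I=S-(S-I)$, and Lemma~\ref{lem:s-i=bar i-i} gives $S-I=J-I$ for every integrally closed $J\supseteq\bar I$. (Its hypotheses hold because $\widetilde I\subseteq S$ by Lemma~\ref{lem:I tilde in S}, and $I\in\Ni[S]$.) The plan is to show directly that every $x\in J-J$ satisfies $x+I\subseteq I$. By reflexivity, this amounts to showing $x+I+(S-I)\subseteq S$.

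Take $x\in J-J$, $i\in I$ and $y\in S-I$. Since $S-I=J-I$, we have $i+y\in J$. Then $x+(i+y)\in x+J\subseteq J\subseteq S$, because $J$ is an integral ideal. Hence $x+i\in S-(S-I)=I^v=I$, so $x+I\subseteq I$, that is, $x\in I-I$. This gives $J-J\subseteq I-I$.

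The only point needing care is the application of Lemma~\ref{lem:s-i=bar i-i}. It requires $\widetilde I\subseteq S$, which is supplied by Lemma~\ref{lem:I tilde in S} applied to the reflexive ideal $I$. It also requires $J$ to contain $\bar I$, which is exactly our hypothesis. Note that we use the version $S-I=J-I$ for $I$ itself rather than for $\widetilde I$; the lemma provides both. Beyond this there is no real obstacle: the argument is a short chase through the definitions, using \ref{D-6} implicitly.
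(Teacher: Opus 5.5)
Your proof is correct and is essentially the paper's argument. The paper runs the same chase for $\widetilde{I}$, using $S-\widetilde{I}=J-\widetilde{I}$ and then $\widetilde{I}-\widetilde{I}=I-I$ by \ref{D-1}, whereas you use the untranslated form $S-I=J-I$ of Lemma~\ref{lem:s-i=bar i-i} directly, which saves that last step.

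One small precision: to obtain $\widetilde{I}\subseteq S$, Lemma~\ref{lem:I tilde in S} should be applied to $\widetilde{I}$ (reflexive as a translate of $I$, with conductor $\operatorname{c}(S)$), not to $I$ itself.
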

\begin{proof} 
We observed that $I$ reflexive implies that $\bar I$ is well-defined. Let $t \in J-J $ and $x \in \widetilde{I}$. Since $S-\widetilde{I}=J-\widetilde{I}$ by Lemma~\ref{lem:s-i=bar i-i}, for every $u \in S-\widetilde{I}$, we have $t+x+u \in t+J \subseteq J \subseteq S$, and then $t+x \in S-(S-\widetilde{I})=\widetilde{I}$, which implies $t \in \widetilde{I}-\widetilde{I}=I-I$ (this equality holds by ~\ref{D-1}).
\end{proof}

With this, we can offer a numerical semigroup version of \cite[Proposition 3.5]{CDS}, which will become useful later.

\begin{corollary}\label{cor:m+bar I-bar I}
    If $I \in \mathfrak{I}_0(S)$ is reflexive, then $\min(\widetilde{I})+\left(\bar{I}-\bar{I}\right) \subseteq \widetilde{I} \subseteq \bar{I}$.
\end{corollary}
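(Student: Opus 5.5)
Both inclusions follow quickly from Lemma~\ref{lem:I tilde in S} and Proposition~\ref{prop:s-s in e-e}, applied with $J=\bar I$.

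\emph{Right-hand inclusion.} Since $I$ is reflexive and reflexivity is preserved under translation, $\widetilde I$ is reflexive. By construction $\operatorname{c}(\widetilde I)=\operatorname{c}(S)$, so Lemma~\ref{lem:I tilde in S} gives $\widetilde I\subseteq S$. Every element of $\widetilde I$ is at least $\min(\widetilde I)$. Hence
\[
\widetilde I\subseteq S\cap(\min(\widetilde I)+\mathbb{N})=S(\min(\widetilde I))=\bar I.
\]

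\emph{Left-hand inclusion.} Apply Proposition~\ref{prop:s-s in e-e} with the integrally closed ideal $J=\bar I$, which trivially contains itself. This gives $\bar I-\bar I\subseteq I-I$. By \ref{D-1}, $I-I=\widetilde I-\widetilde I$, since translating both arguments by the same integer cancels.

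Now set $m=\min(\widetilde I)\in\widetilde I$ and take any $t\in\bar I-\bar I$. Then $t\in\widetilde I-\widetilde I$, so $t+\widetilde I\subseteq\widetilde I$, and in particular $m+t\in\widetilde I$. This proves $\min(\widetilde I)+(\bar I-\bar I)\subseteq\widetilde I$.

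The only step needing care is the identification $I-I=\widetilde I-\widetilde I$ together with the well-definedness of $\bar I$. The latter already follows from the reflexivity of $I$, as remarked after Lemma~\ref{lem:s-i=bar i-i}.
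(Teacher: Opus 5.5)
Your proof is correct and follows essentially the same route as the paper: the right-hand inclusion comes from Lemma~\ref{lem:I tilde in S}, and you additionally spell out why $\widetilde I\subseteq S$ gives $\widetilde I\subseteq S(\min(\widetilde I))=\bar I$. The left-hand inclusion comes from Proposition~\ref{prop:s-s in e-e} with $J=\bar I$, together with $I-I=\widetilde I-\widetilde I$.
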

\begin{proof}
    The inclusion $\widetilde{I} \subseteq \bar{I}$ follows from Lemma~\ref{lem:I tilde in S}. For the other inclusion, let $t \in \bar{I}-\bar{I}$. By Proposition~\ref{prop:s-s in e-e} we have $\bar{I}-\bar{I} \subseteq I-I=\widetilde{I}-\widetilde{I}$, therefore $t + \widetilde{I} \subseteq \widetilde{I}$ and thus $\widetilde{I}$ is an ideal of $\bar{I}-\bar{I}$, then $\min(\widetilde{I})+(\bar{I}-\bar{I}) \subseteq \widetilde{I}$.
\end{proof}

\section{irreducible normalized reflexive ideals}\label{sec:irred-ref-ideals}


Let $S$ be a numerical semigroup and let $I$ be a normalized reflexive ideal of $S$. We say that $I$ is \emph{NR-irreducible} if $I$ cannot be expressed as the intersection of two normalized reflexive ideals properly containing $I$. As the intersection of reflexive ideals is reflexive, every reflexive ideal is the intersection of NR-irreducible ideals.

\begin{lemma}\label{lem:ic-r-irred}
    Let $S$ be a numerical semigroup and let $I \in \mathfrak{I}_0(S)$. If $\widetilde{I}$ is integrally closed, then $I$ is NR-irredudible.
\end{lemma}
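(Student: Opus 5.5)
The plan is to show first that $I$ is a normalized reflexive ideal, and then that any decomposition $I=I_1\cap I_2$ into normalized reflexive ideals containing $I$ forces $I_1=I$ or $I_2=I$. Reflexivity is immediate. By hypothesis $\widetilde{I}=S(s)$ for some $s\in S$, and $I$ is a translation of $\widetilde{I}$, so $I$ is reflexive by Lemma~\ref{lem:ic-implies-ref}.

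Now suppose $I=I_1\cap I_2$ with $I_1,I_2$ normalized reflexive ideals containing $I$. Set $d=\operatorname{c}(S)-\operatorname{c}(I)$ and $J_k=d+I_k$, so that $\widetilde{I}=J_1\cap J_2$. I will record three facts about these translates.
\begin{itemize}
\item Since $\min(I)=\min(I_k)=0$, we have $\min(J_k)=d=\min(\widetilde{I})=s$.
\item From $I\subseteq I_k$ we get $\operatorname{c}(I_k)\le \operatorname{c}(I)$, hence $\operatorname{c}(J_k)\le \operatorname{c}(S)$.
\item Each $J_k$ is reflexive by Proposition~\ref{prop: star basic properties}(6).
\end{itemize}

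The key observation is this. If $\operatorname{c}(J_k)=\operatorname{c}(S)$, then Lemma~\ref{lem:I tilde in S} gives $J_k\subseteq S$. Combined with $\min(J_k)=s$, this yields $J_k\subseteq S(s)=\widetilde{I}$, and therefore $J_k=\widetilde{I}$, that is, $I_k=I$.

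It therefore remains to rule out the case $\operatorname{c}(J_1)<\operatorname{c}(S)$ and $\operatorname{c}(J_2)<\operatorname{c}(S)$, which I expect to be the only real point. In that case $\operatorname{f}(S)\in J_1\cap J_2=S(s)$, which contradicts $\operatorname{f}(S)\notin S$. Hence at least one $J_k$ has conductor exactly $\operatorname{c}(S)$, and by the key observation the corresponding $I_k$ equals $I$. So $I$ cannot be the intersection of two normalized reflexive ideals properly containing it, which is the claim.

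The degenerate case $s>\operatorname{f}(S)$ needs no separate treatment, since the argument above never uses $s\le \operatorname{f}(S)$. In that case one even has $I=\mathbb{N}$, and no normalized ideal properly contains $\mathbb{N}$.
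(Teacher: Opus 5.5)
Your proof is correct and follows essentially the same route as the paper. You translate to $\widetilde{I}=S(s)$, use Lemma~\ref{lem:I tilde in S} to force any factor with conductor $\operatorname{c}(S)$ to equal $\widetilde{I}$, and rule out both factors having smaller conductor via $\operatorname{f}(S)\in S(s)$. The differences are cosmetic: you argue directly rather than by contradiction, and you make explicit the step $\min(J_k)=s$ (from normalization) and the reflexivity of $I$, both of which the paper leaves implicit.
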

\begin{proof}
    By Lemma~\ref{lem:ic-implies-ref}, we know that $\widetilde{I}$ is reflexive.
    Let $s \in \mathbb{Z}$ such that $s+I=\widetilde{I}=\bar{I}$ and notice that $s=\min(\widetilde{I})=\min(\bar{I})$. Suppose that there are reflexive ideals $J_1,J_2 \in \mathfrak{I}_0(S)$ such that $I \subsetneq J_1,\,I \subsetneq J_2$ and $I=J_1 \cap J_2$; then $\widetilde{I}=(s+J_1) \cap (s+J_2)$. Fix $i \in \{1,2\}$. We have $\widetilde{I} \subsetneq s+J_i$ and therefore $\operatorname{c}(s+J_i) \leq \operatorname{c}(S)$. Since $s+J_i$ is reflexive, if $\operatorname{c}(s+J_i)=\operatorname{c}(S)$, Lemma~\ref{lem:I tilde in S} would give $s+J_i \subseteq S$, but this in turn would imply $s+J_i \subseteq \widetilde{I}$, which is a contradiction. Thus, it must be $\operatorname{c}(s+J_i) < \operatorname{c}(S)$ for every $i=1,2$ and therefore $\operatorname{f}(S) \in (s+J_1) \cap (s+J_2) =\widetilde{I}$, which raises another contradiction. It follows that $I$ is NR-irredudible.
\end{proof}

Recall that for $s\in S$, we defined $S(s)=\{ t\in S : t\ge s\}$. Proposition~\ref{prop:expr-I-star} can be further particularized as follows.

\begin{proposition}\label{prop:exp-ref}
    Let $S$ be a numerical semigroup with $M=\operatorname{M}(S)$ and let $I$ be a normalized reflexive ideal of $S$. Then, there exists an anti-chain with respect to the order induced by ${M-M}$, $s_1,\dots, s_n\in S$, such that 
    \[
    I=(-s_1+S(s_1))\cap \dots \cap (-s_n+S(s_n)).
    \]
\end{proposition}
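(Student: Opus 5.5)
The plan is to start from the expression of $I^v$ given in Proposition~\ref{prop:expr-I-star}, intersect it with $\mathbb{N}$, and then discard redundant terms using the order induced by $M-M$. Since $I$ is normalized and reflexive, $0\in I$ and $I=I^v$. Proposition~\ref{prop:expr-I-star} then gives
\[
I=\bigcap_{z\in A}(-z+S), \qquad A=\operatorname{Minimals}_{\le_S}(S-I)\subseteq S .
\]
The set $A$ is finite and nonempty, because $S-I$ is a nonempty ideal.

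\textbf{Step 1: intersect with $\mathbb{N}$.} Because $I$ is normalized, $I\subseteq \mathbb{N}$, so
\[
I=I\cap\mathbb{N}=\bigcap_{z\in A}\bigl((-z+S)\cap\mathbb{N}\bigr).
\]
For each $z\in S$ we have
\[
(-z+S)\cap\mathbb{N}=-z+\bigl(S\cap(z+\mathbb{N})\bigr)=-z+S(z).
\]
Hence $I=\bigcap_{z\in A}(-z+S(z))$, which is already of the required shape, with the $s_i$ ranging over $A\subseteq S$.

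\textbf{Step 2: reduce to an antichain.} Write $a\preceq b$ when $b-a\in M-M$. By Remark~\ref{rem:I-I-oversemigroup}, $M-M$ is an over-semigroup of $S$, so it contains $0$, is closed under addition, and lies in $\mathbb{N}$. Therefore $\preceq$ is a partial order on $\mathbb{Z}$. The key claim is that $s_i\preceq s_j$ with $s_i,s_j\in S$ implies
\[
-s_i+S(s_i)\subseteq -s_j+S(s_j).
\]
To see this, take $x\ge 0$ with $x+s_i\in S$; we must show $x+s_j\in S$. Write $x+s_j=(x+s_i)+(s_j-s_i)$. If $x+s_i\in M$, this sum lies in $M+(M-M)\subseteq M\subseteq S$. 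Otherwise $x+s_i=0$, which forces $x=0=s_i$, and then $x+s_j=s_j\in S$.

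Consequently, whenever two elements of $A$ are comparable, the term indexed by the larger one contains the term indexed by the smaller one and may be deleted without changing the intersection. Keeping only the $\preceq$-minimal elements of the finite set $A$, which form an antichain $s_1,\dots,s_n$, yields the stated decomposition.

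The only mildly delicate point is the case $x+s_i=0$ in the containment claim. This is exactly why the order is taken with respect to $M-M$ rather than $S-S$, and it is handled by the observation above.
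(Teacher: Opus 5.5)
Your proof is correct. Its skeleton matches the paper's: Proposition~\ref{prop:expr-I-star}, then intersecting with $\mathbb{N}$, then pruning comparable indices. The difference lies in the key containment step.

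\textbf{The paper's route.} It first shows that, for $s_i<s_j$, one has $-s_i+S(s_i)\subseteq -s_j+S(s_j)$ if and only if $s_j-s_i\in S(s_i)-S(s_i)$. It then obtains $M-M\subseteq S(s_i)-S(s_i)$ from Lemma~\ref{lem:ic-implies-ref} and Proposition~\ref{prop:s-s in e-e}, and hence through Lemma~\ref{lem:s-i=bar i-i}. Concretely, it applies these to the reflexive normalized ideal $-s_i+S(s_i)$ with $J=M$.

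\textbf{Your route.} You verify the containment directly from $M+(M-M)\subseteq M$, treating the case $x+s_i=0$ separately. This buys several things:
\begin{itemize}
\item It is self-contained and avoids the $\widetilde{I}$, $\bar I$ machinery entirely.
\item It covers $s_i=0$ cleanly. There the paper's appeal to Proposition~\ref{prop:s-s in e-e} with $J=M$ would require $\bar I=S(0)=S\subseteq M$, which fails. This is harmless, since $0\in A$ forces $I=S$ and $A=\{0\}$.
\item You make explicit that $\preceq$ is a partial order, so the minimal elements of the finite set $A$ exist, lie below every other element, and form an antichain. The paper leaves this implicit.
\end{itemize}

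\textbf{What the paper's route buys.} It gives an exact criterion for when one factor $-s_i+S(s_i)$ is contained in another. It also presents the pruning as an instance of the general inclusion $J-J\subseteq I-I$ for reflexive $I$, which the paper reuses later.
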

\begin{proof}
As $I$ is a normalized reflexive ideal of a numerical semigroup $S$, we have $I=I^v$. By Proposition~\ref{prop:expr-I-star}, there exists $s_1,\dots,s_k\in S$, incomparable with respect to $S$ (they are precisely the minimal generators of the ideal $S-I$), such that 
\[
I=(-s_1+S)\cap \dots \cap (-s_n+S).
\]

Observe also that as $I\subseteq \mathbb{N}$, we have
\begin{align*}
I& =(-s_1+S)\cap \dots \cap (-s_n+S)\cap \mathbb{N}\\
& = ((-s_1+S)\cap\mathbb{N})\cap \dots \cap ((-s_n+S)\cap\mathbb{N})\\
& = (-s_1+S(s_1))\cap \dots \cap (-s_n+S(s_n)).
\end{align*}
Take $i$ and $j$ such that $s_i<s_j$. Notice that $-s_i+S(s_i)\subseteq -s_j+S(s_j)$ if and only if $s_j-s_i+S(s_i)\subseteq S(s_j)$.  

Suppose that $s_j-s_i+S(s_i)\subseteq S(s_j)$. From $s_j>s_i$, we deduce that $S(s_j)\subseteq S(s_i)$, and consequently $s_j-s_i+S(s_i)\subseteq S(s_i)$. Thus, $s_j-s_i\in S(s_i)-S(s_i)$.

Now, suppose that $s_j-s_i\in S(s_i)-S(s_i)$. Then, $s_j-s_i+x\in S(s_i)$ for all $x\in S(s_i)$. But then $x-s_i\ge 0$ and so $x+s_j-s_i\ge s_j$, which leads to $x+s_j-s_i\in S(s_j)$. This shows that $s_j-s_i+S(s_i)\subseteq S(s_j)$.

This proves that if $s_i<s_j$, then $-s_i+S(s_i)\subseteq -s_j+S(s_j)$ if and only if $s_j-s_i\in S(s_i)-S(s_i)$. 

Finally, suppose that $s_i\le_{M-M} s_j$, with $i\neq j$. Then, $s_j-s_i\in M-M$ and as $M-M\subseteq \mathbb{N}$, we have in addition that $s_j>s_i$. By Lemma~\ref{lem:ic-implies-ref} and Proposition~\ref{prop:s-s in e-e}, we deduce that $M-M\subseteq S(s_i)-S(s_i)$, and so $s_j-s_i\in S(s_i)-S(s_i)$, which by the above discussion, means that $-s_i+S(s_i)\subseteq -s_j+S(s_j)$. Thus, we can remove $-s_j+S(s_j)$ from the expression of $I$.
\end{proof}


A decomposition of a reflexive ideal $I$ in the form given in Proposition~\ref{prop:exp-ref}, where none of the factors can be removed, will be called an \emph{R-factorization}, and the number of factors will be its \emph{length}.

By Remark~\ref{rem:on-lenght-decomposition-principal-ideals}, the length of a factorization of an ideal $I$ is at most $\min(\operatorname{t}(I),\operatorname{t}(I+K))$. It may happen that this bound is far from being reached.
\begin{example}
    Let us go back to Example~\ref{ex:ti-tik-incomparable}, with $S=\langle 8, 10, 12, 13, 17, 19\rangle$ and $I=\{0, 1, 4, 5, 6, 7\}+S=-12+S(12)$ (and thus it is a translation of an integrally closed ideal). Then, $I$ is reflexive and $\min(\operatorname{t}(I+K),\operatorname{t}(I))=6$. 
\end{example}

The next example shows that the above bound can be attained (and thus it is sharp).

\begin{example}
Consider the numerical semigroup $S=\langle 6,10,11,13,14\rangle$. The ideal $I=\{0,1,8,9\}+S$ decomposes as
\[
I=(-10+S(10))\cap (-11+S(11))\cap (-12+S(12))\cap (-13+S(13))
\]
and $\operatorname{t}(I)=\operatorname{t}(\operatorname{K}(S)+I)=4$. This decomposition is unique.
\end{example}

\begin{lemma}\label{lem:I-M}
    Let $M=\operatorname{M}(S)$. If $I\in \mathfrak{I}_0(S)$, then 
    \[
    I-M=I\cup\operatorname{PF}(I).
    \]
\end{lemma}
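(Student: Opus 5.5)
The plan is a direct double inclusion, using the definitions $\operatorname{PF}(I)=(I-M)\setminus I$ and $I-M=\{x\in\mathbb{Z} : x+M\subseteq I\}$.

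First, the inclusion $I\cup\operatorname{PF}(I)\subseteq I-M$. Since $I$ is an ideal, $I+M\subseteq I+S=I$, so $I\subseteq I-M$. The inclusion $\operatorname{PF}(I)\subseteq I-M$ holds by the very definition of $\operatorname{PF}(I)$ as $(I-M)\setminus I$.

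Next, the reverse inclusion. Take $x\in I-M$. If $x\in I$ there is nothing to prove. If $x\notin I$, then $x\in (I-M)\setminus I=\operatorname{PF}(I)$. Hence $I-M=I\cup\operatorname{PF}(I)$. Written out this way, the identity is just the set decomposition $A=(A\cap I)\cup(A\setminus I)$ for $A=I-M$, together with $I\subseteq A$.

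There is essentially no obstacle here. The only point to check is that $I\subseteq I-M$, and this follows because $M\subseteq S$ and $I+S=I$. The hypothesis $I\in\mathfrak{I}_0(S)$ is not really needed; the statement holds for every ideal of $S$. I would note this, and then observe that for normalized $I$ the identity says that $I-M$ is obtained from $I$ by adding exactly the pseudo-Frobenius numbers of $I$, all of which lie in $\mathbb{N}$ because $I\subseteq I-M\subseteq \mathbb{N}-M$.
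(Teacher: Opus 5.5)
Your double-inclusion argument is correct and is essentially the paper's proof. You are also right that normalization plays no role; the paper itself later applies the lemma to $M$, which is not normalized. Your closing aside is false, though. In fact $\mathbb{N}-M=-\operatorname{m}(S)+\mathbb{N}\not\subseteq\mathbb{N}$, and for the normalized ideal $\mathbb{N}$ one gets $\operatorname{PF}(\mathbb{N})=\{-\operatorname{m}(S),\dots,-1\}$, so pseudo-Frobenius numbers of a normalized ideal need not be non-negative.
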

\begin{proof}
    Since $I$ is an ideal we get $I+M\subseteq I$, while from the definition of pseudo-Frobenius numbers we have $\operatorname{PF}(I)+M\subseteq I$, therefore $I\cup\operatorname{PF}(I)\subseteq I-M$ by \ref{D-6}.\\
    For the other inclusion, let $x\in I-M$, then either $x\in
    I$ or $x\in (I-M)\setminus I=\operatorname{PF}(I)$. In conclusion, $I-M=I\cup \operatorname{PF}(I)$.
\end{proof}

In particular, by applying this result to $M$ and $S$, respectively, we obtain $M-M=M\cup \operatorname{PF}(M)$ and $S-M=S\cup\operatorname{PF}(S)$. As $M-M=S-M$, we deduce that 
\begin{enumerate}
    \item $M-M$ is reflexive by Remark~\ref{rem:char-ref}, and 
    \item $M-M=S\cup \operatorname{PF}(S)$.
\end{enumerate}

\begin{proposition}\label{prop:min ref}
Let $S$ be a numerical semigroup different than $\mathbb{N}$ and let $M=\operatorname{M}(S)$. The normalized reflexive ideal $M-M$
    is minimal with respect to the set-theoretic inclusion among the normalized reflexive ideal of $S$ that are different than $S$. 
\end{proposition}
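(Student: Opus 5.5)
We need to show: if $I$ is a normalized reflexive ideal with $I\neq S$, then $M-M\subseteq I$. Combined with the facts that $M-M$ is normalized, reflexive, and different from $S$, this gives the minimality claim. Indeed, $M-M=S\cup\operatorname{PF}(S)$ contains $\operatorname{f}(S)\notin S$, and $M-M$ is reflexive by the observation after Lemma~\ref{lem:I-M}. Since $S\subseteq M-M\subseteq \mathbb{N}$, it is also normalized.

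The plan uses the description of reflexive ideals as intersections of principal ideals (Corollary~\ref{cor:char-ref}). Write
\[
I=\bigcap_{z\in S-I}(-z+S).
\]
Because $0\in I$, every $z\in S-I$ satisfies $z=z+0\in S$. Moreover $I\neq S$ forces $0\notin S-I$: if $0\in S-I$, then $I\subseteq S$, and with $S\subseteq I$ we would get $I=S$. Hence $S-I\subseteq S\setminus\{0\}=M$.

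Since $S-I\subseteq M$, property~\ref{D-2} gives $S-M\subseteq S-(S-I)=I^v=I$. As noted after Lemma~\ref{lem:I-M}, $S-M=M-M$, so $M-M\subseteq I$.

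This proves that any normalized reflexive $I\neq S$ contains $M-M$. So no normalized reflexive ideal other than $S$ lies strictly inside $M-M$: any such ideal would contain $M-M$, hence equal it. This is exactly minimality.

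No step is really hard. The only point needing care is showing $S-I\subseteq M$, that is, ruling out $0\in S-I$; this uses normalization ($S\subseteq I$) together with $I\neq S$. If one prefers to avoid the identity $S-M=M-M$, it can be checked directly: for $x\in M-M$ and $m\in M$ we have $x+m\in M\subseteq S$, so $x\in S-M$; conversely, for $x\in S-M$, $x+m\in S$ and $x\ge 0$ (since $S-M\subseteq S-\operatorname{C}(S)=\mathbb{N}$), so $x+m\ge m>0$ and $x+m\in M$.
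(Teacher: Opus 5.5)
Your proof is correct and follows essentially the same route as the paper: both derive $S-I\subseteq M$ from $S\subseteq I$ and $I\neq S$, and then conclude $M-M\subseteq S-(S-I)=I$. The only difference is the last step. The paper uses \ref{D-6} via $(S-I)+(M-M)\subseteq M+(M-M)\subseteq M\subseteq S$, while you use \ref{D-2} together with $S-M=M-M$, which amounts to the same thing; the intersection-of-principal-ideals representation you write down is not actually needed.
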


\begin{proof}
    Let $I\in\mathfrak{I}_0(S)\setminus\{S\}$. We have that $S\subseteq I$, because $I$ is normalized, and therefore $S-I\subseteq S-S=S$ by \ref{D-2}. Moreover, since $I\neq S$, we get $0\not\in S-I$ (otherwise, $I\subseteq S$ and thus $I=S$) and this implies that $S-I\subseteq M$. Thus we have
    \[
    (S-I)+(M-M)\subseteq M+(M-M) \subseteq M\subseteq S
    \]
    and it follows that $M-M\subseteq S-(S-I)$ by \ref{D-6}. In particular, $M-M$ is contained in every normalized reflexive ideals which is different than $S$. 
\end{proof}

\begin{remark}
    The argument in the proof of Proposition~\ref{prop:min ref} can be generalized from numerical semigroup rings to any commutative, noetherian, one-dimensional local ring $(R,\mathfrak{m},k)$, such that the residue field $k$ is infinite and the integral closure $\overline{R}$ of $R$ in the total ring of fraction $Q(R)$ is of finite type as $R$-module (these assumption are the setup of \cite{CDS}). As noted in the introduction, for any $I$ and $J$ fractional ideals of $R$ in $Q(R)$, we denote the colon ideal as
    \[
    I:J=\{x\in Q(R):xJ\subseteq I\}
    \]
    and $I$ is reflexive if $R:(R:I)=I$.\\
    Any regular fractional ideal $I$ of $R$ is isomorphic to a fractional ideal between $R$ and $\overline R$, since if  $x\in I$ is a principal reduction of $I$, then we can divide by $x$ and get $1\in \frac{1}{x}I$ (for more details on principal reductions of fractional ideals, see \cite{CDS}). Let $I$ be a fractional reflexive ideal of $R$ such that $R\subsetneq I \subseteq \overline R$; then $R\subseteq I$ and thus $R:I \subseteq R$. If $ 1\in R: I$ then $R=R: I$, i.e. $I=R$, a contradiction. Thus, $R: I\subseteq \mathfrak{m}$. With the same computation of the proof of Proposition~\ref{prop:min ref} we get
    \[
    \mathfrak{m}:\mathfrak{m} \subseteq R:(R:I)=I
    \]
    for every fractional reflexive ideal $I$ of $R$ such that $R\subsetneq I \subseteq \overline R$.
\end{remark}

We see next that to obtain an R-factorization, it is not enough to eliminate those $-s_j+S(s_j)$ containing another single factor, as it can happen that $-s_j+S(s_j)$ contains an intersection of more than one factor, all of them not contained in $-s_j+S(s_j)$. Moreover, R-factorizations do not need to be unique nor have the same length.

\begin{example} \label{ex:8-9-10}
Let us consider $S=\langle 8, 9, 10, 11, 12, 15\rangle$, and let $I=\{ 0, 7, 13, 14 \}+S$.
\begin{verbatim}
gap> S:=NumericalSemigroup(8, 9, 10, 11, 12, 15);;
gap> I:=[ 0, 7, 13, 14 ]+S;;
gap> M:=MaximalIdeal(S);;
gap> MinimalGenerators(S-I);
[ 8, 9, 10, 11, 12, 15 ]
gap> SmallElements(M-M);
[ 0, 7 ]
\end{verbatim}

Notice that $8\le_{M-M} 15$, and so $15$ is not needed for the representation of $I$ as intersection of translations of integrally closed ideals. 

It can be shown that 
\begin{align*}
    I & =(-8+S(8))\cap (-10+S(10))\cap (-12+S(12)) \\
    & = (-8+S(8))\cap (-9+S(9))\cap (-11+S(11))\cap (-12+S(12)),
\end{align*}
and that none of these decompositions can be refined. Moreover, $-10+S(10)$ does not include any of the $-k+S(k)$ for $k\in \{8,9,11,12\}$. Thus, $I$ admits two R-factorizations, one of length three and another of length four. Note that in this case $\operatorname{t}(I)$.
\end{example}

\begin{remark}\label{rem:antichains}
    Notice that if $I$ is a normalized ideal of a numerical semigroup $S$, then $I=\{0,g_1,\dots,g_n\}+S$, where $g_1,\dots,g_n$ are incomparable gaps (with respect to $\le_S$) of $S$ (actually, $\{0,g_1,\dots,g_n\}=\operatorname{Minimals}_{\le_S}(I)$). Thus, the set of normalized ideals of $S$ is in one-to-one correspondence with the set of anti-chains of gaps of $S$ with respect to $\le_S$ (this fact was considered in \cite{b-k,icm-apery} to give bounds for the cardinality of $\Ni$). By looking at Proposition~\ref{prop:exp-ref} we may think that we have a correspondence between anti-chains of elements in $S$ smaller than or equal to the conductor and reflexive ideals (if $k\ge \operatorname{c}(S)$, then $-k+S(k)=\mathbb{N}=-\operatorname{c}(S)+S(\operatorname{c}(S))$).
    However, this correspondence is not as nice as the one for anti-chains of gaps and normalized ideals, since it may happen that two different anti-chains produce the same reflexive ideal, as it happens in Example~\ref{ex:8-9-10}.

Proposition~\ref{prop:exp-ref} provides a procedure to compute the set of normalized reflexive ideals of a numerical semigroup $S$. We start by computing $M-M$, with $M$ the maximal ideal of $S$. Then, we compute the set of all anti-chains of small elements of $S$ with respect to the order induced by $M-M$. For each anti-chain $A$, we compute $\bigcap_{x\in A} -x+S(x)$. This procedure is faster than computing the set of all normalized ideals and then filter those that are reflexive. 
\end{remark}






We might pose the same question, but now for integral ideals (ideals included in their ambient semigroup). In this setting, irreducible means that it is not the intersection of two proper integral ideals containing it (see \cite[Chapter~3]{ns-app}). If we take $I=(-5+S)\cap(-11+S)$ with $S=\langle 5,11,18,24)$, then (1) $\widetilde{I}$ is reflexive by Corollary~\ref{cor:int-irred} and~\ref{D-1}, (2) it is integral by Lemma~\ref{lem:I tilde in S}, and (3) it is irreducible according to \cite[Theorem~4]{ns-app}. However, $I$  is the intersection of four $\mathbb{Z}$-irreducible ideals of $S$ (using \cite[Proposition~24]{ns-app}).

\begin{remark}
If $I$ is reflexive and normalized, from the expression of $I$ given in Proposition \ref{prop:exp-ref}, we deduce that 
\[\operatorname{c}(I)=\max\{-s_1,\dots,-s_n\}+\operatorname{c}(S) = -\min\{s_1,\dots,s_n\}+\operatorname{c}(S).\] 
It follows that 
\[
\operatorname{c}(S)-\operatorname{c}(I)\in S.
\]
Observe that $\operatorname{c}(S)-\operatorname{c}(I)=\operatorname{f}(S)-\operatorname{f}(I)$, and thus this is an alternate proof of \cite[Lemma~4.7]{Spi15}. In particular, every Frobenius number $g$ of a normalized reflexive ideal is a gap of the first type of $S$ (that is, $g\not \in S$ and $\operatorname{f}(S)-g\in S$); whence $g\not\in \operatorname{K}(S)$.
\end{remark}


With this, we are able to prove the following theorem.

\begin{theorem}\label{th:R-irr}
    Let $S$ be a numerical semigroup and let $I$ be a normalized reflexive ideal of $S$. Then, $I$ is NR-irreducible if and only if $\widetilde{I}$ is integrally closed.
\end{theorem}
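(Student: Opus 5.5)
One direction is already available: if $\widetilde{I}$ is integrally closed, then $I$ is NR-irreducible by Lemma~\ref{lem:ic-r-irred}. So the work is in the converse, which I would prove by contraposition. Assume $I$ is a normalized reflexive ideal and $\widetilde{I}$ is not integrally closed. I will write $I$ as the intersection of two normalized reflexive ideals, each properly containing $I$.

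The decomposition comes from Proposition~\ref{prop:exp-ref}. It gives an anti-chain $s_1<\dots<s_n$ in $S$ (with respect to $\le_{M-M}$, ordered by size) such that
\[
I=(-s_1+S(s_1))\cap\dots\cap(-s_n+S(s_n)),
\]
and after discarding redundant factors I may assume no factor can be removed. I may also take every $s_i\le \operatorname{c}(S)$, since larger indices give $\mathbb{N}$. By the preceding remark, $\operatorname{c}(I)=\operatorname{c}(S)-s_1$, so $\widetilde{I}=s_1+I$. Each factor $-s_i+S(s_i)$ is a normalized reflexive ideal: it is a translate of an integrally closed ideal (Lemma~\ref{lem:ic-implies-ref}), it contains $S$, and it is contained in $\mathbb{N}$.

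Now split into two cases. If $n=1$, then $I=-s_1+S(s_1)$, so $\widetilde{I}=S(s_1)$ is integrally closed, contrary to the assumption. Hence $n\ge 2$. Set
\[
J_1=-s_1+S(s_1),\qquad J_2=\bigcap_{i\ge 2}(-s_i+S(s_i)).
\]
Both are normalized. $J_1$ is reflexive as above, and $J_2$ is reflexive by Corollary~\ref{cor:int-irred}. By construction $I=J_1\cap J_2$.

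It remains to check that both inclusions $I\subseteq J_1$ and $I\subseteq J_2$ are strict. If $I=J_2$, then $J_1$ could be dropped from the factorization, against non-redundancy. If $I=J_1$, then $\widetilde{I}=s_1+J_1=S(s_1)$ would be integrally closed, again a contradiction. Therefore $I$ is not NR-irreducible.

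The only delicate point is the identification $\widetilde{I}=s_1+I$. It relies on $\operatorname{c}(I)=\operatorname{c}(S)-\min_i s_i$, which requires choosing the decomposition so that all $s_i\le\operatorname{c}(S)$. Non-redundancy also matters, but only for excluding $I=J_2$; the case $I=J_1$ is handled by the integrally-closed argument alone. Everything else is formal.
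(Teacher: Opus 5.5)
Your proof is correct and follows the paper's route. You decompose $I$ via Proposition~\ref{prop:exp-ref}, use NR-irreducibility to force $I$ to equal a single factor $-s_1+S(s_1)$, and conclude $\widetilde{I}=S(s_1)$; your contrapositive split into $J_1$ and $J_2$, using a non-redundant factorization with all $s_i\le\operatorname{c}(S)$, just makes explicit the steps the paper's two-line argument leaves implicit.
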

\begin{proof}
    If $\widetilde{I}$ is integrally closed, then by Lemma~\ref{lem:ic-r-irred} we deduce that $I$ is NR-irreducible.

    For the other implication, we know that as $I$ is reflexive, $I=I^v$, and so, by Lemma~\ref{prop:exp-ref}, we have that $I=(-s_1+S(s_1))\cap \dots \cap (-s_n+S(s_n))$ for some incomparable $s_1,\dots,s_n\in S$. Since $I$ is NR-irreducible, $I=-s_1+S(s_1)$, and so $\widetilde{I}=S(s_1)$ is integrally closed.
\end{proof}

We conclude this section by giving a
different kind of decomposition of 
a reflexive ideal of $S$, in terms of reflexive ideals of over-semigroups of $S$.

  \begin{remark}
  Let $S=T_1 \cap \dots \cap T_r$ be a decomposition of $S$ as an intersection of numerical over-semigroups and $I$ an ideal of $S$. Then, if $I$ is reflexive, there exists $J_1,\dots , J_r$ such that $I=J_1 \cap \dots \cap J_r$, with $J_i$ a $T_i$-reflexive ideal of $T_i$ for all $i\in \{1,\dots,r\}$.
\end{remark}
\begin{proof}
    Since $I$ is reflexive, by Remark~\ref{rem:char-ref} there exists an ideal $J$ such that $I=S-J$, and we have $$I=S-J=(T_1 \cap \dots \cap T_r)-J=(T_1-J) \cap \dots \cap (T_r-J)\,.$$
    For every $j\in\{1,\dots,r\}$, take $J_i=T_i-J$, which is an ideal of $T_i$; indeed, $(T_i-J)+J+T_i \subseteq T_i+T_i=T_i$, thus by \ref{D-6}, we have $(T_i-J)+T_i \subseteq T_i-J$. The claim follows from the fact that $T_i-J$ is clearly $T_i$-reflexive by Remark~\ref{rem:char-ref}. 
\end{proof}
\begin{example}
    The decomposition of $I$ in the previous remark, is not trivial (in general). Take, for example, $S=\langle 7,10,13 \rangle$, $I=\{ 0,29,32\}+S$, $T_1=S \cup \{29\}$, $T_2=S \cup \{32\}$ (these are numerical semigroups). Then $J_1=T_1-(S-I)=I \cup \{16\}$ and $J_2=T_2-(S-I)= I \cup \{25\}$.
\end{example}

\section{Apéry sets}\label{sec:apery-sets}


Let $S$ be a numerical semigroup, and $I$ be an ideal of $S$. Recall that for a given non-zero element $m$ of $S$, the Apéry set of $I$ with respect to $m$ is the set $\Ap(I,m)=\{ i\in I : i-m\not\in I\}$. As in the case of numerical semigroups, this set has the form $\Ap(I,m)=\{a_0,a_1,\dots,a_{m-1}\}$, where $a_i$ is the smallest element in $I$ congruent with $i$ modulo $m$. It easily follows (as in the case of numerical semigroups, see \cite[Proposition~5]{ns-app}) that $\operatorname{f}(I)=\max(\Ap(I,m))-m$. Thus, if $\max(\Ap(S,m))=\max(\Ap(I,m))$, we deduce that $\operatorname{f}(S)=\operatorname{f}(I)$ and $\operatorname{c}(S)=\operatorname{c}(I)$. We will use later this fact.

\begin{remark}
When working with Apéry sets with respect to $m$, we will be referring to the elements in this Apéry sets with indices ranging from $0$ to $m-1$. Thus, if an index is out of this range it will be understood as the corresponding index modulo $m$.   
\end{remark}

It easily follows that if $\Ap(I,m)=\{a_0,a_1,\dots,a_{m-1}\}$ and $z$ is an integer, then $z\in I$ if and only if $a_z\le z$. In particular, if $J$ is an ideal of $S$ and $\Ap(J,m)=\{b_0,b_1,\dots,b_{m-1}\}$, then 
\begin{itemize}
    \item $I\subseteq J$ if and only if $b_i\le a_i$ for all $i\in \{0,\dots,m-1\}$,
    \item $\Ap(I\cup J,m)=\{\min(a_0,b_0),\dots,\min(a_{m-1},b_{m-1})\}$, and 
    \item $\Ap(I\cap J,m)=\{\max(a_0,b_0),\dots,\max(a_{m-1},b_{m-1})\}$.
\end{itemize}

Therefore, if $I=I_1\cap \dots \cap I_k$, with $I_1,\dots,I_k$ ideals of $S$ and this intersection cannot be refined (no redundant factors), then for each $i\in \{1,\dots,k\}$, there exists at least $i_j\in \{0,\dots,m-1\}$ such that $a_{i_j}$ is in $\Ap(I_i,m)$.

\begin{example}
We come back to Example~\ref{ex:8-9-10}. We have 
$S=\langle 8, 9, 10, 11, 12, 15\rangle$, $I=\{ 0, 7, 13, 14 \}+S$, $M-M=S\cup\{7,14\}$ and $S-I= \{ 8, 9, 10, 11, 12, 15 \}+S$.

We already observed that $8\le_{M-M} 15$, and so $15$ is not needed for the representation of $I$ as the intersection of translations of integrally closed ideals. 

If we list the Ap\'ery sets of 
$-s_i+S(s_i)$ for the remaining 
minimal generators of $S-I$,
we get:
\begin{align*}
\Ap(S(8)-8,8)&=\{0,1,2,3,4,13,14,7\}, \\
\Ap(S(9)-9,8)&=\{0,1,2,3,12,13,6,7\},\\
\Ap(S(10)-10,8)&=\{0,1,2,11,12,5,6,7\},\\
\Ap(S(11)-11,8)&=\{0,1,10,11,4,5,6,7\},\\
\Ap(S(12)-12,8)&=\{0,9,10,3,4,5,6,7\}.
\end{align*}


The following table shows that $I  =(-8+S(8))\cap (-10+S(10))\cap (-12+S(12))$ is a 
non-redundant factorization of $I$, because by taking maximums in their respective Apéry sets (ele\-ment-wise), we recover the Apéry set of $I$ and none of them can be removed (we have highlighted the positions where the maximum is achieved).
\[
\begin{array}{lcccccccc}
& 0 & 1 & 2 & 3 & 4 & 5 & 6 & 7 \\ \hline \hline
\Ap(-8+S(8),8)& \mathbf{0} & 1 & 2 & 3 & 4 & \mathbf{13} & \mathbf{14} & \mathbf{7} \\
\Ap(-10+S(10),8) & 0 & 1 & 2 & \mathbf{11} & \mathbf{12} & 5 & 6 & 7  \\
\Ap(-12+S(12),8) & 0 & \mathbf{9} & \mathbf{10} & 3 & 4 & 5 & 6 & 7\\ \hline
\Ap(I,8) & 0 & 9 & 10 & 11 & 12 & 13 & 14 & 7 
\end{array}
\]

Analogously, $I= (-8+S(8))\cap (-9+S(9))\cap (-11+S(11))\cap (-12+S(12))$ is also non-redundant decomposition as the following table shows.
\[
\begin{array}{lcccccccc}
& 0 & 1 & 2 & 3 & 4 & 5 & 6 & 7 \\ \hline \hline
\Ap(-8+S(8),8)& \mathbf{0} & 1 & 2 & 3 & 4 & \mathbf{13} & \mathbf{14} & \mathbf{7} \\
\Ap(-9+S(9),8) & 0 & 1 & 2 & 3 & \mathbf{12} & \mathbf{13} & 6 & 7\\
\Ap(-11+S(11),8) & 0 & 1 & \mathbf{10} & \mathbf{11} & 4 & 5 & 6 & 7  \\
\Ap(-12+S(12),8) & 0 & \mathbf{9} & \mathbf{10} & 3 & 4 & 5 & 6 & 7\\ \hline
\Ap(I,8) & 0 & 9 & 10 & 11 & 12 & 13 & 14 & 7 
\end{array}
\]

\end{example}

The expression of $\Ap(I+J,m)$ is a bit more complicated (see \cite[Proposition~4.8]{icm-apery}). The following result deals with the remaining operation between ideals: $I-J$.

\begin{theorem}\label{th:ap-smi}
Let $S$ be a numerical semigroup and let $m\in S\setminus\{0\}$. Let $I$ and $J$ be ideals of $S$. Let  $\Ap(I,m)=\{a_0,a_1,\dots,a_{m-1}\}$ and $\Ap(J,m)=\{b_0,b_1,\dots,b_{m-1}\}$. Then, 
$\Ap(J-I,m)=\{x_0,x_1,\dots,x_{m-1}\}$, where
\[
x_i=\max\{ b_{i+j}-a_j : j\in \{0,\dots,m-1\}\}.
\]
\end{theorem}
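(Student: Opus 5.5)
Write $x_i$ for the element of $\Ap(J-I,m)$ in the class of $i$. Since $J-I$ is an ideal of $S$, the remarks at the start of this section show that $x_i$ is the least element of $J-I$ congruent to $i$ modulo $m$. So it suffices to show that, for an integer $z\equiv i \pmod m$,
\[
z\in J-I \iff z\ge \max\{b_{i+j}-a_j : j\in\{0,\dots,m-1\}\}.
\]
Then the least such $z$ in the class of $i$ is exactly this maximum, provided the maximum is congruent to $i$. It is, because every term $b_{i+j}-a_j$ satisfies $b_{i+j}-a_j\equiv (i+j)-j=i \pmod m$.

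For the characterization, I use the fact that $I=\bigcup_j (a_j+m\mathbb{N})$ and likewise for $J$. This holds because $m\in S$ and $a_j$ is the least element of $I$ in its class, so an integer $y$ lies in $I$ if and only if $y\ge a_{y}$, with indices read modulo $m$. Since $I+m\mathbb N\subseteq I$, the condition $z+I\subseteq J$ is equivalent to $z+a_j\in J$ for every $j$. Indeed, each element of $I$ has the form $a_j+km$ with $k\ge 0$, and $J$ is closed under adding $m$.

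Now $z+a_j\equiv i+j \pmod m$, so $z+a_j\in J$ if and only if $z+a_j\ge b_{i+j}$, that is, $z\ge b_{i+j}-a_j$. Taking the conjunction over all $j$ gives the displayed equivalence. It follows that $x_i=\max_j(b_{i+j}-a_j)$, and these $x_i$, one in each residue class, form the whole Apéry set.

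There is no substantial obstacle. The only step that needs care is the reduction from all of $I$ to its Apéry set, which uses that $m\in S$ and that both $I$ and $J$ are closed under adding $m$. The other point to keep straight is that indices are taken modulo $m$, following the convention fixed in the remark above.
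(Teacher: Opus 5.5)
Your proposal is correct and takes essentially the same approach as the paper. Both arguments reduce $z+I\subseteq J$ to checking $z+a_j\in J$ for the Apéry elements $a_j$, and then compare $z+a_j$ with $b_{i+j}$ in the same residue class. The only difference is presentation: you state this as a single membership criterion, whereas the paper proves the inequalities $x_i\le t_i$ and $x_i\ge t_i$ separately.
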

\begin{proof}
    Let $t_i=\max\{ b_{i+j}-a_j : j\in \{0,\dots,m-1\}\}$. We start by showing that $t_i\in S-I$ for all $i\in \{0,\dots,m-1\}$. Suppose that $t_i=b_{i+j}-a_j$ for some $j$. Take $y\in I$. Then, $y=a_k+lm$ for some non-negative integer $l$ and some $k\in \{0,\dots,m-1\}$. Then, $b_{i+j}-a_j+a_k+l m\ge b_{i+k}-a_k+a_k+l m=b_{i+k}+lm\in J$. Both $b_{i+j}-a_j+a_k+l m$ and $b_{i+k}+lm$ are are in the same class modulo $m$, and so this forces  $b_{i+j}-a_j+a_k+l m$ to be in $J$. This proves that $x_i\le t_i$. 

    Suppose that $x_i<t_i$. Then, $x_i+a_j<b_{i+j}$ (the same $j$ as in the preceding paragraph). But this would lead to $x_i+a_j\not\in J$, contradicting that $x_i+I\subseteq J$.
\end{proof}


As a consequence of the previous theorem, we get a description of the Apéry set of $I^v$ in terms of that of $I$.
 
\begin{corollary}\label{cor:ap-smsmi}
    Let $S$ be a numerical semigroup with multiplicity $m$ and let $I$ be an ideal of $S$. Let $\Ap(S,m)=\{0,w_1,\dots,w_{m-1}\}$ and $\Ap(I,m)=\{a_0,a_1,\dots,a_{m-1}\}$. Then, denoting $\Ap(I^v,m)=\{y_0,y_1,\dots,y_{m-1}\}$, we have
\[
y_i=\max\{ w_{i+j}+\min\{a_k-w_{j+k} : k\in \{0,\dots,m-1\}\}  : j\in \{0,\dots,m-1\}\}.
\]
\end{corollary}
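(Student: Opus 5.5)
The plan is to apply Theorem~\ref{th:ap-smi} twice, since $I^v=S-(S-I)$. First take the pair $(J,I)=(S,I)$, so that $b_l=w_l$ (with $w_0=0$). This gives $\Ap(S-I,m)=\{x_0,\dots,x_{m-1}\}$ with
\[
x_j=\max\{w_{j+k}-a_k : k\in\{0,\dots,m-1\}\}=-\min\{a_k-w_{j+k} : k\in\{0,\dots,m-1\}\}.
\]
Then apply Theorem~\ref{th:ap-smi} a second time, now to the pair $(S,S-I)$, that is, with $J=S$ and the ideal $S-I$ in the role of $I$. This yields
\[
y_i=\max\{w_{i+j}-x_j : j\in\{0,\dots,m-1\}\}.
\]
Substituting $-x_j=\min\{a_k-w_{j+k} : k\}$ gives exactly the stated formula.

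Two points need checking before the theorem applies. First, $S-I$ must be an ideal of $S$, which was noted in Section~\ref{sed:prelim}. Second, $m$ must be a nonzero element of $S$, which holds because $m$ is the multiplicity. I also have to keep the index conventions consistent. In Theorem~\ref{th:ap-smi} the element $x_i$ sits in residue class $i$, and indices are read modulo $m$. The same convention is used for $w$ and for $a$, so after the substitution the indices $i+j$ and $j+k$ are just the composite shifts, and no extra work is needed there.

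The only step that could cause trouble is making sure the second application uses the correct indexing for $S-I$: its $j$-th Apéry element must be the one congruent to $j$ modulo $m$. Theorem~\ref{th:ap-smi} states exactly this, so the argument is a direct substitution. The main care is in rewriting $\max(-\cdot)$ as $-\min(\cdot)$ to reach the displayed form.
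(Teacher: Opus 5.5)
Your proposal is correct and is essentially the paper's argument: apply Theorem~\ref{th:ap-smi} first to the pair $(S,I)$ to obtain $\Ap(S-I,m)$, then to the pair $(S,S-I)$, and rewrite $-\max\{w_{j+k}-a_k\}$ as $\min\{a_k-w_{j+k}\}$. Your remarks on the residue-class indexing are accurate; the paper leaves them implicit.
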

\begin{proof}
    Apply Theorem~\ref{th:ap-smi} to $S$ and $S-I$, and take into account that $-\max\{w_{j+k}-a_k : k\in \{0,\dots,m-1\}\}=\min\{a_k-w_{j+k} : k\in \{0,\dots,m-1\}\}$.
\end{proof}

\begin{remark}
    Notice that for every $i$, $w_{i+j}+\min\{ a_k-w_{j+k} : k\in \{0,\ldots m-1\} \}\le w_{i+j}+a_i-w_{j+i}=a_i$. Thus $(y_0,\ldots,y_{m-1})\le (a_0,\dots, a_{m-1})$, yielding $I\subseteq I^v$, which is something already known.
\end{remark}

We can particularize Theorem \ref{th:ap-smi} to $I=J$. 
We get $\Ap(I-I,m)=\{x_0,\dots,x_{m-1}\}$ with $x_i=\max\{ a_{i+j}-a_j : j\in \{0,\dots,m-1\}\}$. This expression allows, for example, to check when the canonical ideal $K$ of $S$ satisfies $K\subseteq I-I$, a condition that we will use later in Proposition~\ref{prop:char-almost-symmetric-ref}.

\begin{corollary}
    Let $S$ be a numerical semigroup and let $m\in S\setminus \{0\}$. Let $I$ be an ideal of $S$ with $\Ap(I,m)=\{a_0,\dots, a_{m-1}\}$, and let $K$ be the canonical ideal of $S$. Let $\operatorname{PF}(S)=\{g_1,\dots,g_t\}$ and set $i_j= (\operatorname{f}(S)-g_j) \bmod m$ for all $j\in\{1,\dots,t\}$. Then, $K\subseteq I-I$ if and only if for every $j\in \{1,\dots, t\}$ and all $k\in \{0,\dots, m-1\}$, the inequality $\operatorname{f}(S)-g_j\ge a_{i_j+k}-a_k$ holds.
\end{corollary}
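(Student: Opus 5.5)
The plan is to reduce the inclusion $K\subseteq I-I$ to finitely many elements of $K$ and then read off membership in $I-I$ from its Apéry set, computed via Theorem~\ref{th:ap-smi}. Since $I-I$ is an over-semigroup of $S$ (Remark~\ref{rem:I-I-oversemigroup}), in particular an ideal of $S$, and $K=\operatorname{K}(S)$ is an ideal of $S$, the inclusion $K\subseteq I-I$ holds if and only if every minimal generator of $K$ (with respect to $\le_S$) lies in $I-I$.

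First I will identify these minimal generators. By definition $z\in K$ if and only if $\operatorname{f}(S)-z\notin S$. Hence $z$ is $\le_S$-minimal in $K$ exactly when $\operatorname{f}(S)-z$ is $\le_S$-maximal among the gaps of $S$, that is, when $\operatorname{f}(S)-z\in \operatorname{PF}(S)$. Indeed, $z-s\in K$ if and only if $\operatorname{f}(S)-z+s\notin S$, so minimality of $z$ for all $s\in M$ is precisely the condition $\operatorname{f}(S)-z+M\subseteq S$. Therefore
\[
\operatorname{Minimals}_{\le_S}(K)=\{\operatorname{f}(S)-g_1,\dots,\operatorname{f}(S)-g_t\},
\]
and $K\subseteq I-I$ if and only if $\operatorname{f}(S)-g_j\in I-I$ for every $j\in\{1,\dots,t\}$.

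Next I will apply the Apéry-set criterion. Write $\Ap(I-I,m)=\{x_0,\dots,x_{m-1}\}$. An integer $z$ lies in $I-I$ if and only if $z\ge x_{z \bmod m}$. By Theorem~\ref{th:ap-smi} with $J=I$ we have $x_i=\max\{a_{i+k}-a_k : k\in\{0,\dots,m-1\}\}$. Taking $z=\operatorname{f}(S)-g_j$, whose residue is $i_j$, the condition $z\ge x_{i_j}$ is equivalent to $\operatorname{f}(S)-g_j\ge a_{i_j+k}-a_k$ for all $k$. Combining this over all $j$ gives the statement.

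The only step needing care is the identification of the minimal generators of $K$ with the elements $\operatorname{f}(S)-g$, $g\in\operatorname{PF}(S)$. Alternatively, this step can be bypassed. A general $z\in K$ satisfies $\operatorname{f}(S)-z\le_S g$ for some $g\in \operatorname{PF}(S)$, so $z\in (\operatorname{f}(S)-g)+S$. Since $I-I$ is closed under adding elements of $S$, it suffices to check the elements $\operatorname{f}(S)-g_j$, and these do lie in $K$ because $g_j\notin S$. This gives both directions without computing the minimal generators exactly.
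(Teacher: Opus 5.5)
Your proposal is correct and follows the same route as the paper. Both arguments reduce $K\subseteq I-I$ to checking that each $\operatorname{f}(S)-g$, $g\in\operatorname{PF}(S)$, lies in $I-I$, and then read this off from $\Ap(I-I,m)$ via Theorem~\ref{th:ap-smi} with $J=I$. The only difference is that you prove the description $K=(\operatorname{f}(S)-\operatorname{PF}(S))+S$ directly, and correctly, whereas the paper cites it from \cite[Corollary~4.11]{icm-apery}.
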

\begin{proof}
    We know by \cite[Corollary~4.11]{icm-apery} that $K=\operatorname{f}(S)-\operatorname{PF}(S)+S$. Thus, $K\subseteq I-I$ if and only if for every $g\in \operatorname{PF}(S)$, we have that $\operatorname{f}(S)-g\in I-I$. This latter condition is equivalent to $\operatorname{f}(S)-g\ge x_i$, with $i=(\operatorname{f}(S)-g)\bmod m$. The rest of the proof follows by Theorem~\ref{th:ap-smi}.
\end{proof}

Now, another version using the Apéry set of the numerical semigroup.

\begin{corollary}
    Let $S$ be a numerical semigroup with multiplicity $m$. Let $I$ be an ideal of $S$ with $\Ap(S,m)=\{a_0,\dots, a_{m-1}\}$, and let $K$ be the canonical ideal of $S$. Let $\Ap(S,m)=\{0,w_1,\dots,w_{m-1}\}$, and set $f=\operatorname{f}(S)\bmod m$. 
    Then, $K\subseteq I-I$ if and only if for every $i,j\in \{0,\dots, m-1\}$, the inequality $w_f-w_{f-i}\ge a_{i+j}-a_j$ holds.
\end{corollary}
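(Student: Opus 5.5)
The plan is to reduce the inclusion $K\subseteq I-I$ to a componentwise comparison of Apéry sets with respect to $m=\operatorname{m}(S)$, exactly as in the previous corollary. Throughout, I read the hypothesis as $\Ap(I,m)=\{a_0,\dots,a_{m-1}\}$; the statement writes $\Ap(S,m)$ here, which appears to be a typo since $\Ap(S,m)=\{0,w_1,\dots,w_{m-1}\}$ is introduced separately. We recalled earlier that for ideals $I',J'$ with Apéry sets $\{c_i\}$ and $\{d_i\}$, one has $I'\subseteq J'$ if and only if $d_i\le c_i$ for all $i$. By Theorem~\ref{th:ap-smi} applied with $J=I$, we have $\Ap(I-I,m)=\{x_0,\dots,x_{m-1}\}$ with
\[
x_i=\max\{a_{i+j}-a_j : j\in\{0,\dots,m-1\}\}.
\]
Hence the whole proof reduces to computing $\Ap(\operatorname{K}(S),m)$ explicitly in terms of the $w_i$.

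The key step is the claim that the element of $\Ap(\operatorname{K}(S),m)$ in the class of $i$ is $w_f-w_{f-i}$. To see this, recall that $z\in \operatorname{K}(S)$ if and only if $\operatorname{f}(S)-z\notin S$. An integer $u$ lies outside $S$ exactly when $u<w_{u \bmod m}$. So $z\in\operatorname{K}(S)$ with $z\equiv i \pmod m$ holds if and only if $\operatorname{f}(S)-z<w_{f-i}$, that is, $z>\operatorname{f}(S)-w_{f-i}$. Since $\operatorname{f}(S)-w_{f-i}\equiv i\pmod m$, the least such $z$ is $\operatorname{f}(S)+m-w_{f-i}$.

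Next I use the standard fact $\operatorname{f}(S)=\max\Ap(S,m)-m$, recalled at the start of this section. The maximum of $\Ap(S,m)$ lies in the class of $\operatorname{f}(S)$, so it equals $w_f$, and therefore $\operatorname{f}(S)+m=w_f$. This gives the claimed Apéry element $w_f-w_{f-i}$. As a sanity check, the case $i=0$ gives $0=\min \operatorname{K}(S)$.

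Finally, $K\subseteq I-I$ holds if and only if $x_i\le w_f-w_{f-i}$ for every $i$. By the formula for $x_i$, this is equivalent to $w_f-w_{f-i}\ge a_{i+j}-a_j$ for all $i,j\in\{0,\dots,m-1\}$, which is the statement. The only delicate point is the index bookkeeping modulo $m$ in computing $\Ap(\operatorname{K}(S),m)$, in particular identifying $\operatorname{f}(S)+m$ with $w_f$. Everything else follows directly from Theorem~\ref{th:ap-smi}.
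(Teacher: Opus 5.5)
Your proof is correct and follows the paper's argument. You compare $\Ap(\operatorname{K}(S),m)$ componentwise with $\Ap(I-I,m)$ from Theorem~\ref{th:ap-smi}, and you are right that the hypothesis should read $\Ap(I,m)=\{a_0,\dots,a_{m-1}\}$. The only difference is that the paper cites \cite[Proposition~4.11]{icm-apery} for the fact that the element of $\Ap(\operatorname{K}(S),m)$ in the class of $i$ is $w_f-w_{f-i}$, whereas you derive it directly from the definition of $\operatorname{K}(S)$, correctly identifying $w_f=\operatorname{f}(S)+m$ (the paper writes this loosely as $f+m$).
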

\begin{proof}
    We know by \cite[Proposition~4.11]{icm-apery} that $\Ap(K,m)=\{0,w_f-w_{f-1},\dots,w_f-w_{f-(m-1)}\}$ (where $w_f=f+m$). Thus, $K\subseteq I-I$ if and only if for every $i\in \{0,\dots,m-1\}$, we have   $w_f-w_{f-i}\ge x_i$. The rest of the proof follows by Theorem~\ref{th:ap-smi}.
\end{proof}


We conclude the section by giving  alternate proof of Lemma~\ref{lem:I tilde in S} using Apéry sets.

\begin{corollary}
    Let $S$ be a numerical semigroup and let $I$ be a reflexive ideal of $S$ such that $\operatorname{c}(S)=\operatorname{c}(I)$. Then, $I\subseteq S$.
\end{corollary}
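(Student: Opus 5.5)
The idea is to use Corollary~\ref{cor:ap-smsmi} with $m=\operatorname{m}(S)$, writing $\Ap(S,m)=\{0,w_1,\dots,w_{m-1}\}$ (with $w_0=0$) and $\Ap(I,m)=\{a_0,\dots,a_{m-1}\}$. Since $I$ is reflexive, $I=I^v$, so $a_i=y_i$ for all $i$. Since $I\subseteq S$ holds if and only if $w_i\le a_i$ for every $i$, it suffices to prove $a_i\ge w_i$ for all $i$.

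The first step is to take $j=0$ in the formula for $y_i$. This gives
\[
a_i=y_i\ \ge\ w_i+\min\{a_k-w_k : k\in\{0,\dots,m-1\}\}.
\]
It therefore suffices to show that $\min_k(a_k-w_k)\ge 0$. Equivalently, we need $a_k\ge w_k$ for all $k$, which is circular. So the real task is to show that $\delta:=\min_k(a_k-w_k)$ equals $0$ (or at least is $\ge 0$).

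The second step uses the conductor hypothesis. Because $\operatorname{c}(I)=\operatorname{c}(S)$, we have $\operatorname{f}(I)=\operatorname{f}(S)$, and hence $\max_k a_k=\max_k w_k=\operatorname{f}(S)+m$. Let $r$ be the index with $w_r=\operatorname{f}(S)+m$. Then $a_r\le \operatorname{f}(S)+m=w_r$, so $a_r-w_r\le 0$ and $\delta\le 0$.

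The third step, which is the main obstacle, is the reverse inequality $\delta\ge 0$. Suppose $\delta<0$. Then $I\not\subseteq S$, and the $j=0$ bound only gives $a_i\ge w_i+\delta$, which yields no contradiction by itself. I would instead use the $j$ for which the maximum is attained. Fix $i=r$. For every $j$ we have $a_r=y_r\ge w_{r+j}+\min_k(a_k-w_{j+k})$. The key is to choose $j$ so that the right-hand side exceeds $\operatorname{f}(S)+m$. This mirrors the proof of Lemma~\ref{lem:I tilde in S}, where the point was that $\min(S-I)>0$ forces $\operatorname{f}(S)\in I^v$. By Theorem~\ref{th:ap-smi}, the Apéry element of $S-I$ in class $0$ is $x_0=\max_k(w_k-a_k)=-\delta>0$. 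From $\operatorname{c}(I)=\operatorname{c}(S)$ we get $S-I\subseteq\mathbb{N}$, so every $x_j\ge 0$. Moreover, $x_0>0$ together with $x_0\equiv 0 \pmod m$ gives $x_0\ge m$. In Corollary~\ref{cor:ap-smsmi} the inner term equals $-x_j$, so $y_r=\max_j(w_{r+j}-x_j)$.

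To finish, I would show that $\min(S-I)>0$ places $\operatorname{f}(S)$ in $I^v$. This amounts to checking that the class $r$ entry of $\Ap(I^v,m)$ is at most $\operatorname{f}(S)$, that is, $w_{r+j}-x_j\le \operatorname{f}(S)$ for all $j$, whereas $a_r=\operatorname{f}(S)+m$. For $j=0$ this is $w_r-x_0\le \operatorname{f}(S)+m-m$. For $j\ne 0$ we have $x_j>0$ and $w_{r+j}\le \operatorname{f}(S)+m$, and the congruence $w_{r+j}-x_j\equiv r\equiv \operatorname{f}(S)\pmod m$ gives $w_{r+j}-x_j\le \operatorname{f}(S)$. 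Hence $y_r\le \operatorname{f}(S)<a_r$, which contradicts $I=I^v$. Therefore $\delta=0$, and the first step yields $a_i\ge w_i$ for all $i$, that is, $I\subseteq S$.
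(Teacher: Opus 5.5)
Your proof is correct and follows essentially the same route as the paper's Apéry-set proof. Both fix the residue class $r$ of $\operatorname{f}(S)$ and use Corollary~\ref{cor:ap-smsmi} to show that, once some $a_k<w_k$, every term $w_{r+j}+\min_k(a_k-w_{j+k})$ is strictly below $w_r=a_r$, which contradicts $I=I^v$. The only difference is cosmetic: you get the negativity of the inner minimum for $j\neq 0$ by writing it as $-x_j$ and using $S-I\subseteq\mathbb{N}$ plus a residue argument, whereas the paper takes $k=r-j$ and uses $a_{r-j}<a_r$; your Step~1 with $j=0$ is superfluous, since $\delta\ge 0$ is already the conclusion.
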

\begin{proof}
    Let $m$ be the multiplicity of $S$. Observe that $\operatorname{c}(I)=\operatorname{c}(S)$ if and only if $\max\operatorname{Ap}(S,m)=\max\operatorname{Ap}(I,m)$. Let $\Ap(S,m)=\{0,w_1,\dots,w_{m-1}\}$ and $\Ap(I,m)=\{0,a_1,\dots,y_{m-1}\}$.  As $I^v=I$, Corollary~\ref{cor:ap-smsmi} ensures that for all $i$, $a_i=\max\{ w_{i+j}+\min\{a_k-w_{j+k} : k\in \{0,\dots,m-1\}\}  : j\in \{0,\dots,m-1\}\}$.

    Suppose that $a_i=\max(\Ap(I,m))$. Then, $w_i=a_i=\max(\Ap(S,m))$, since the maximum of both sets is the same and thus are in the same congruence class modulo $m$. Notice that for $j\neq 0$, $\min_k\{a_k-w_{j+k}\}< 0$. It suffices to take $k$ such that $j+k=i$, and then $a_k-w_i=a_k-a_i<0$. Hence, $w_{i+j}+\min_k\{a_k-w_{j+k}\} < w_{i+j}<w_i$. 

    Now, suppose that there is some $k$ with $a_k-w_k<0$. Then, $w_i+\min_k \{a_k-w_k\}<w_i$. But this yields a contradiction, since $w_i=a_i=\max_j \{w_{i+j}+\min_k \{a_k-w_{j+k}\}\}<w_i$. This forces $a_j\ge w_k$ for all $k$, and consequently $I\subseteq S$.
\end{proof}

\section{Reflexive ideals in some families of numerical semigroups}\label{sec:some-families}

In this section, we give the numerical counterpart of some results in \cite{CDS} that characterize some classes of one-dimensional rings via their reflexive ideals. 

We have mentioned already that if $S$ is a numerical semigroup, then every ideal of $S$ is reflexive. The converse is not difficult to prove and was already shown in \cite[Proposition~I.1.15]{bdf}.

\begin{proposition}[{\cite[Proposition~I.1.15]{bdf}}]
Let $S$ be a numerical semigroup. The following conditions are equivalent:
\begin{enumerate}
    \item $S$ is symmetric;
    \item every ideal of $S$ is symmetric.
\end{enumerate}
\end{proposition}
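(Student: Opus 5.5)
We read condition (2) as ``every ideal of $S$ is reflexive''. This is how the result is cited from \cite{bdf}, and it is the only reading consistent with the surrounding discussion, since symmetry is a property of semigroups, not of ideals. With this reading, the plan is to derive both implications from results already established, so that each one takes only a line or two.

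For (1)$\Rightarrow$(2), assume $S$ is symmetric, so $\operatorname{K}(S)=S$. The quickest route is Lemma~\ref{lem:calculate-I*-with-K}: for any ideal $I$ it gives
\[
I^v=(\operatorname{K}(S)+I)-\operatorname{K}(S)=(S+I)-S=I-S=I,
\]
where the last equality holds because $I+S=I$ and $0\in S$. Alternatively, apply Theorem~\ref{thm:A-Valentina} with the over-semigroup $T=S$: the hypothesis $\operatorname{K}(S)\subseteq S$ holds, so every ideal of $S$ is a reflexive ideal of $S$.

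For (2)$\Rightarrow$(1), assume every ideal of $S$ is reflexive, and in particular $\operatorname{K}(S)$ is. The one point needing care is to check that $\operatorname{c}(\operatorname{K}(S))=\operatorname{c}(S)$, so that Lemma~\ref{lem:I tilde in S} applies.
\begin{itemize}
\item If $z>\operatorname{f}(S)$, then $\operatorname{f}(S)-z<0$, so it is not in $S$, and hence $z\in\operatorname{K}(S)$.
\item $\operatorname{f}(S)\notin \operatorname{K}(S)$, because $\operatorname{f}(S)-\operatorname{f}(S)=0\in S$.
\end{itemize}
Together these give $\operatorname{f}(\operatorname{K}(S))=\operatorname{f}(S)$, hence equal conductors. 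Lemma~\ref{lem:I tilde in S} then yields $\operatorname{K}(S)\subseteq S$. Combined with the inclusion $S\subseteq \operatorname{K}(S)$ already noted in the preliminaries, this gives $\operatorname{K}(S)=S$, that is, $S$ is symmetric.

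Alternatively, (2)$\Rightarrow$(1) also follows from Theorem~\ref{thm:A-Valentina} with $T=S$, read in the converse direction: if every ideal of $S$ is reflexive, then $\operatorname{K}(S)\subseteq S$. There is no real obstacle in either direction. The only step needing verification is the conductor computation for $\operatorname{K}(S)$, and even that can be bypassed by quoting Theorem~\ref{thm:A-Valentina}.
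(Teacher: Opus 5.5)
Your proof is correct, and reading (2) as ``every ideal of $S$ is reflexive'' is right (``symmetric'' there is a typo). The paper gives no proof of its own and simply cites \cite{bdf}. Its surrounding text obtains (1)$\Rightarrow$(2) from $\operatorname{K}(S)=S$ and J\"ager's duality \eqref{eq:prop-canonical}. This is essentially your use of Lemma~\ref{lem:calculate-I*-with-K}, since that lemma is derived from \eqref{eq:prop-canonical}. Within the paper, both directions are also the case $T=S$ of Theorem~\ref{thm:A-Valentina}, as you note.

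Your main argument for (2)$\Rightarrow$(1) is a different and shorter route. The proof of Theorem~\ref{thm:A-Valentina} constructs the non-reflexive ideal $S\cup(x+1+\mathbb{N})$ for some $x\in\operatorname{K}(S)\setminus S$. You instead check $\operatorname{c}(\operatorname{K}(S))=\operatorname{c}(S)$ and apply Lemma~\ref{lem:I tilde in S} directly to $\operatorname{K}(S)$. This shows that when $S$ is not symmetric, the canonical ideal itself fails to be reflexive. It is exactly the mechanism of Remark~\ref{rem:K vs S}: $\operatorname{K}(S)$ is the largest ideal with conductor $\operatorname{c}(S)$, while $S$ is the largest reflexive one, so reflexivity of all ideals forces the two to coincide. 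The route through Theorem~\ref{thm:A-Valentina} buys generality, since it handles all over-semigroups $T$ at once; yours is the more economical argument for this particular statement.
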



We now focus on almost symmetric semigroups, a class that generalizes symmetric and pseudo-symmetric semigroups.

Recall that we say that a numerical semigroup $S$ is irreducible if it cannot be expressed as the intersection of two numerical semigroups properly containing it. If $f$ is the Frobenius number of $S$, then irreducibility translates to symmetry (for all $x\in \mathbb{Z}$, $x\not\in S$ implies $f-x\in S$) in the case $f$ is odd or pseudo-symmetry (for all $x\in \mathbb{Z}\setminus\{f/2\}$, $x\not\in S$ implies $f-x\in S$) in the case $f$ is even (see, for instance, \cite[Chapter~2]{ns-app}).

From the definition of $\operatorname{K}(S)$, it follows that $S$ is symmetric if and only if $\operatorname{K}(S)=S$. Hence, by \eqref{eq:prop-canonical}, we have that every ideal of $S$ is reflexive.

In the case, $S$ is pseudo-symmetric, we get $\operatorname{K}(S)=S\cup\{f/2\}$, and every ideal $I$ with $f/2\in I-I$ is reflexive (see \cite[Corollary~14]{ns-app}).

Recall that an integer $x$ is said to be a pseudo-Frobenius number of $S$ if $x+M\subseteq S$, with $M$ the maximal ideal of $S$. The set of pseudo-Frobenius numbers of $S$ is denoted by $\operatorname{PF}(S)$. Clearly, $f\in \operatorname{PF}(S)$. Moreover, $S$ is symmetric if and only if $\operatorname{PF}(S)=\{f\}$, and $S$ is pseudo-symmetric if and only if $\operatorname{PF}(S)=\{f/2,f\}$ (see, for instance, \cite[Chapter~2]{ns-app}).


The cardinality of $\operatorname{PF}(S)$ is known as the \emph{type} of $S$, and is denoted by $\operatorname{t}(S)$. A numerical semigroup is said to be \emph{almost symmetric} if $\operatorname{g}(S)=(\operatorname{f}(S)+\operatorname{t}(S))/2$. In particular, as the symmetry of $S$ is equivalent to $\operatorname{g}(S)=(\operatorname{f}(S)+1)/2$ and the pseudo-symmetry to $\operatorname{g}(S)=(\operatorname{f}(S)+2)/2$ (see \cite[Chapter~2]{ns-app}), we deduce that every irreducible numerical semigroup is almost symmetric.

It is well known that $S$ is almost symmetric if and only if $\operatorname{K}(S)+\operatorname{M}(S)\subseteq \operatorname{M}(S)$ \cite[Proposition~4]{b-f}. Hence, in light of \ref{D-6} $S$ is almost symmetric if and only if $\operatorname{K}(S)\subseteq \operatorname{M}(S)-\operatorname{M}(S)$. 

\begin{proposition}\label{prop:char-almost-symmetric-ref}
    For a numerical semigroup $S$, the following conditions are equivalent:
    \begin{enumerate}
        \item $S$ is almost symmetric;
        \item a non-principal ideal $I$ of $S$ is reflexive if and only if $\operatorname{K}(S) \subseteq I-I$.
    \end{enumerate}
\end{proposition}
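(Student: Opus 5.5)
The plan is to prove the two implications separately. Both directions rest on the characterization $S$ almost symmetric $\iff \operatorname{K}(S)\subseteq \operatorname{M}(S)-\operatorname{M}(S)$, recalled just before the statement. Write $K=\operatorname{K}(S)$ and $M=\operatorname{M}(S)$.

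First, the sufficiency half of (2) holds for every numerical semigroup. Suppose $I$ is any ideal with $K\subseteq I-I$. By Remark~\ref{rem:I-I-oversemigroup}, $T=I-I$ is an over-semigroup of $S$, and $I+T\subseteq I$, so $I$ is an ideal of $T$. Since $K\subseteq T$, Theorem~\ref{thm:A-Valentina} shows that $I$ is a reflexive ideal of $S$.

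For (1)$\Rightarrow$(2), it remains to show that a non-principal reflexive $I$ satisfies $K\subseteq I-I$. Both hypotheses and the conclusion are invariant under translation: reflexivity by Proposition~\ref{prop: star basic properties}(6), $I-I$ by~\ref{D-1}, and principality trivially. So I replace $I$ by $-\min(I)+I$ and assume $I\in\Ni$. As in the proof of Proposition~\ref{prop: J-reflexivity}, non-principality gives $\min(\widetilde I)>0$, so $\bar I=S(\min\widetilde I)\subseteq S(\operatorname{m}(S))=M$. Applying Proposition~\ref{prop:s-s in e-e} with the integrally closed ideal $J=M\supseteq\bar I$ yields $M-M\subseteq I-I$. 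Almost symmetry gives $K\subseteq M-M$, hence $K\subseteq I-I$.

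For (2)$\Rightarrow$(1), I take $I=M$. It is reflexive, being integrally closed ($M=S(\operatorname{m}(S))$, Lemma~\ref{lem:ic-implies-ref}). It is also non-principal: if $M=z+S$, then $z=\min M=\operatorname{m}(S)$ and $M=\operatorname{m}(S)+S$. Then every nonzero element of $S$ would be a multiple of $\operatorname{m}(S)$, so $S=\operatorname{m}(S)\mathbb{N}$, which has finite complement only if $\operatorname{m}(S)=1$, i.e.\ $S=\mathbb{N}$, excluded. By (2), $K\subseteq M-M$, which is almost symmetry.

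The only delicate point is checking that Proposition~\ref{prop:s-s in e-e} really applies with $J=M$, i.e.\ that $\bar I\subseteq M$ for a non-principal normalized reflexive $I$. The argument in Proposition~\ref{prop: J-reflexivity} supplies this, and everything else is direct.
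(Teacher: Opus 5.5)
Your proof is correct and follows the paper's argument essentially step for step. For (1)$\Rightarrow$(2) you show $\bar I\subseteq M$, apply Proposition~\ref{prop:s-s in e-e} and use $\operatorname{K}(S)\subseteq M-M$; for the converse you apply (2) to $M$. The differences are cosmetic. You route the sufficiency through Theorem~\ref{thm:A-Valentina}, whose proof is exactly the paper's direct estimate $I^v\subseteq I+\operatorname{K}(S)\subseteq I$ from Proposition~\ref{prop:bounding-star-canonical}. You also make explicit two points the paper leaves implicit: normalizing $I$ so that Proposition~\ref{prop:s-s in e-e} applies as stated, and checking that $M$ is non-principal.
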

\begin{proof}

Let $K=\operatorname{K}(S)$ and let $M=\operatorname{M}(S)$, the maximal ideal of $S$. Recall that $M=S(\operatorname{m}(M))$ is integrally closed. 


Suppose that (1) holds, and let $I$ be a non-principal reflexive ideal of $S$. Since $I$ is not principal, by  Proposition~\ref{prop: J-reflexivity}, $\widetilde{I} \subseteq M$ and so also its integral closure $\bar{I}\subseteq M$ (which also is integrally closed). Therefore, by Proposition~\ref{prop:s-s in e-e} we deduce that $M-M\subseteq I-I$. By hypothesis $S$ is almost symmetric, that is, $K\subseteq M-M$, and thus $K\subseteq I-I$.

If $K\subseteq I-I$, then by~\ref{D-6}, $I+K\subseteq I$ and by Proposition~\ref{prop:bounding-star-canonical}, $I^v\subseteq I+K\subseteq I$, which yields $I^v=I$.

Now, suppose that (2) holds. Since $M$ is integrally closed, by Lemma~\ref{lem:ic-implies-ref}, we deduce that $M$ is reflexive. Hence, by hypothesis $K\subseteq M-M$, which means that $S$ is almost symmetric.
\end{proof}

\begin{theorem}\label{thm:AG}
    For a numerical semigroup $S$ with maximal ideal $M$, the following conditions are equivalent:
    \begin{enumerate}
        \item $S$ is almost symmetric;
        \item a non-principal ideal $I$ of $S$ is reflexive if and only if $I$ is an ideal of $M-M$;
        \item a non-principal integral ideal $I$ of $S$ is reflexive if and only if $I$ is an integral ideal of $M-M$.
    \end{enumerate}
\end{theorem}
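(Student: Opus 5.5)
We will prove the cycle of implications (1)$\Rightarrow$(2)$\Rightarrow$(3)$\Rightarrow$(1), building on Proposition~\ref{prop:char-almost-symmetric-ref} and the results on integrally closed ideals. The basic translation is that an ideal $I$ of $S$ is an ideal of $M-M$ exactly when $(M-M)+I\subseteq I$. By~\ref{D-6}, this is equivalent to $M-M\subseteq I-I$.

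For (1)$\Rightarrow$(2), assume $S$ is almost symmetric and let $I$ be non-principal.
\begin{itemize}
\item If $I$ is reflexive, Proposition~\ref{prop: J-reflexivity} gives $\widetilde I\subseteq M$, so $\bar I\subseteq M$ with $M$ integrally closed.
\item Proposition~\ref{prop:s-s in e-e} then yields $M-M\subseteq \widetilde I-\widetilde I=I-I$. That proposition is stated for normalized ideals, so we first translate $I$ to a normalized ideal, using that reflexivity and $I-I$ are translation invariant by~\ref{D-1}.
\item Hence $I$ is an ideal of $M-M$.
\item Conversely, if $I$ is an ideal of $M-M$, then $M-M\subseteq I-I$. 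Almost symmetry gives $\operatorname{K}(S)\subseteq M-M\subseteq I-I$, and Proposition~\ref{prop:char-almost-symmetric-ref} (or directly Proposition~\ref{prop:bounding-star-canonical}) shows $I$ is reflexive.
\end{itemize}
Equivalently, $\operatorname{K}(S)\subseteq M-M$ is an over-semigroup containing the canonical ideal, so Theorem~\ref{thm:A-Valentina} applies to every ideal of $M-M$.

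For (2)$\Rightarrow$(3), let $I$ be a non-principal integral ideal. By (2), $I$ is reflexive if and only if it is an ideal of $M-M$. Since $I\subseteq S\subseteq M-M$, being an ideal of $M-M$ is the same as being an integral ideal of $M-M$.

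For (3)$\Rightarrow$(1), apply (3) to $I=M$. Since $M$ is integrally closed, it is reflexive by Lemma~\ref{lem:ic-implies-ref}, and it is integral.
\begin{itemize}
\item \emph{Principal case.} If $M$ were principal, $M=\operatorname{m}(S)+S$. Then $M-M=S$ and $\operatorname{PF}(S)\subseteq S$, which is impossible unless $S=\mathbb N$; in fact $\operatorname{f}(S)\in \operatorname{PF}(S)$ shows $M$ is not principal when $S\neq\mathbb N$.
\item \emph{Non-principal case.} Condition (3) only gives $M+(M-M)\subseteq M$, which is trivial, so this choice of $I$ yields no information. We need a better test ideal.
\end{itemize}

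\textbf{Main obstacle.} The crux is choosing a non-principal integral ideal $I$ that is an integral ideal of $M-M$ but can only be reflexive if $\operatorname{K}(S)\subseteq M-M$. We will try the following construction. Suppose $S$ is not almost symmetric. The proof of Theorem~\ref{thm:A-Valentina} builds a non-reflexive ideal $U=T\cup(x+1+\mathbb N)$ with $T=M-M$ and $x\in \operatorname{K}(S)\setminus T$. This $U$ is an ideal of $T$, but it is not integral and may be principal. We therefore translate it: take $I=s+U$ with $s\in M$ large, say $s\geq \operatorname{c}(S)$, so that $I\subseteq M$. Then:
\begin{itemize}
\item $I$ is an integral ideal of $M-M$, since $T+U\subseteq U$;
\item $I$ is not reflexive, because reflexivity is invariant under translation and $U$ is not reflexive;
\item $I$ is non-principal, because non-reflexive ideals are never principal.
\end{itemize}
This contradicts (3), so $S$ must be almost symmetric. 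The delicate points to verify are that $U$ is an ideal of $T$ and that translating keeps $I$ integral. The remaining direction is routine.
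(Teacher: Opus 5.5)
Your proof is correct. Your (1)$\Rightarrow$(2) is the same as the paper's, and the difference is in how you close the cycle.

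The paper proves (2)$\Rightarrow$(1) using only the \emph{statement} of Theorem~\ref{thm:A-Valentina}. Every ideal of $M-M$ is reflexive: principal ones trivially, non-principal ones by (2). Hence $\operatorname{K}(S)\subseteq M-M$. The paper then handles (2)$\Leftrightarrow$(3) by translation invariance.

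You instead get (2)$\Rightarrow$(3) by restriction, since $I\subseteq S\subseteq M-M$. You then prove (3)$\Rightarrow$(1) by taking the explicit witness $U=(M-M)\cup(x+1+\mathbb{N})$ from the \emph{proof} of Theorem~\ref{thm:A-Valentina} and translating it by some $s\ge\operatorname{c}(S)$ into $S$. The delicate points you flagged hold. Since $M-M\subseteq\mathbb{N}$ and $x\in\operatorname{K}(S)\subseteq\mathbb{N}$, we get $0=\min(U)$ and $s+U\subseteq S$. Also $x+1+\mathbb{N}$ is an ideal of $M-M$, so $s+U$ is an integral ideal of $M-M$. It is non-reflexive, hence non-principal.

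Your route avoids the (3)$\Rightarrow$(2) translation step. It also exhibits a concrete non-principal integral ideal of $M-M$ that is not reflexive whenever $S$ is not almost symmetric. The cost is that it depends on the internal construction in the proof of Theorem~\ref{thm:A-Valentina}, whereas the paper's argument uses only that theorem's statement and is more modular. You can drop the abandoned attempt with $I=M$.
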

\begin{proof}
    First, notice that $M-M$ is an over-semigroup of $S$ (Remark~\ref{rem:I-I-oversemigroup}). Let $K$ be the canonical ideal of $S$.
    
    \emph{(1) implies (2)}. Let $I$ be a non-principal ideal of $S$ such that $I^v=I$. We have to prove that $I+(M-M)=I$. As in the proof of Proposition~\ref{prop:char-almost-symmetric-ref}, we deduce that $M-M\subseteq I-I$, and so $I\subseteq I+(M-M)\subseteq I+(I-I)\subseteq I$.

    The other implication follows from Theorem~\ref{thm:A-Valentina}, and by the fact that $K\subseteq M-M$ because $S$ is almost symmetric.

    \emph{(2) implies (1)}. We use Theorem~\ref{thm:A-Valentina} again. Let $I$ be an ideal of $M-M$. 
    As $M-M$ is an over-semigroup of $S$, we deduce that $I$ is an ideal of $S$.
    If $I$ is principal (as an ideal of $S$), then it is reflexive, and if it is not principal, then by hypothesis, $I$ is reflexive. Thus, every ideal of $M-M$ is reflexive as an ideal of $S$. By Theorem~\ref{thm:A-Valentina}, we conclude that $K\subseteq M-M$.

    \emph{(2) if and only if (3)}. The proof follows by translations.
\end{proof}

The next results concern the class of numerical semigroups with maximal embedding dimension and its subclass of Arf semigroups.


It is well-known that for any numerical semigroup $S$, one has $\operatorname{e}(S)\le \operatorname{m}(S)$, and if equality holds, $S$ is said to have \emph{maximal embedding dimension}.\\
An ideal $I$ of $S$ is said to be \emph{stable} if $I+I=I+\min(I)$. Stability is invariant under translations.
If every integrally closed ideal of $S$ is stable, $S$ is said to be an \emph{Arf numerical semigroup}.

Notice that the stability of $I$ is equivalent to $I-I=I-\min(I)$ and that 
$S$ has maximal embedding dimension if and only if $M$ is stable (cf. e.g. \cite{ns-app} page 46). Hence, every Arf numerical semigroup has maximal embedding dimension.

%


\begin{theorem}\label{thm:MED}
    For a numerical semigroup $S$ with maximal ideal $M$, the following conditions are equivalent:
    \begin{enumerate}
        \item $S$ has maximal embedding dimension;
        \item a non principal ideal $I$ of $S$ is reflexive if and only if $I$ is a reflexive ideal of $M-M$;
        \item $M$ is $(M-M)$-reflexive.
    \end{enumerate}
\end{theorem}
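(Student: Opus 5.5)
Throughout, let $m=\operatorname{m}(S)$ and $T=M-M$. By Remark~\ref{rem:I-I-oversemigroup}, $T$ is an over-semigroup of $S$, and $M$ is an ideal of $T$. The basic reformulation I will rely on is the one recalled just before the statement: $S$ has maximal embedding dimension if and only if $M$ is stable, if and only if $M-M=M-m$, that is,
\[
M=m+T.
\]
I will prove (1)$\Rightarrow$(2)$\Rightarrow$(3)$\Rightarrow$(1).

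\emph{(1) implies (2).} Assume $M=m+T$. Then for every ideal $I$ of $S$, two applications of \ref{D-1} give
\[
M-(M-I)=(m+T)-((m+T)-I)=m+\bigl(T-(-m+(T-I))\bigr)=T-(T-I).
\]
So $M$-reflexivity and $T$-reflexivity coincide.
\begin{itemize}
\item Let $I$ be non-principal and reflexive. By Proposition~\ref{prop: J-reflexivity}, $I$ is $M$-reflexive, hence $T-(T-I)=I$.
\item Such an $I$ is also an ideal of $T$. After normalizing by a translation, we have $\widetilde{I}\subseteq M$, so $\bar I\subseteq M$. Proposition~\ref{prop:s-s in e-e} then gives $T=M-M\subseteq I-I$, i.e.\ $I+T=I$. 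Hence $I$ is a reflexive ideal of $T$.
\item Conversely, let $I$ be an ideal of $T$ with $T-(T-I)=I$. Then $M-(M-I)=I$. Since $M$ is integrally closed, it is reflexive by Lemma~\ref{lem:ic-implies-ref}, and Lemma~\ref{lemma: reflexivity} yields $S-(S-I)=I$.
\end{itemize}

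\emph{(2) implies (3).} Since $S\neq\mathbb{N}$, the maximal ideal $M$ is not principal: it would have to equal $m+S$, which would force $M-m=S$ and hence $S=\mathbb{N}$. Also, $M$ is reflexive by Lemma~\ref{lem:ic-implies-ref}. Applying (2) to $M$, it is a reflexive ideal of $T$, i.e.\ $T-(T-M)=M$.

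\emph{(3) implies (1).} This is the step I expect to be the main obstacle. The plan is to imitate the proof of Lemma~\ref{lem:I tilde in S}, now with $T$ as the ambient semigroup, and to show that $-m\in T-M$.

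First, from $m+T\subseteq M$ and \ref{D-2}, together with \ref{D-1}, we get
\[
T-M\subseteq T-(m+T)=-m+T,
\]
so $\min(T-M)\ge -m$.

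Second, suppose $-m\in T-M$. Then $-m+M\subseteq T$, so $M\subseteq m+T\subseteq M$, giving $M=m+T$. This is stability, i.e.\ (1).

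It remains to rule out $\min(T-M)>-m$. I would look for an integer $z\notin M$ such that $z+(T-M)\subseteq T$; this contradicts $T-(T-M)=M$.
\begin{itemize}
\item The first candidate is $z=m+\operatorname{f}(T)$. Every $x\in T-M$ then satisfies $x+m\in T\setminus\{0\}$, hence $x+m\ge 1$, so $z+x>\operatorname{f}(T)$ and $z+x\in T$.
\item The difficulty is showing $m+\operatorname{f}(T)\notin M$. If this fails, I would instead try $z=\operatorname{f}(M)=\operatorname{f}(S)$, or a suitable element of $\operatorname{PF}(M)$. Here I would use that $\operatorname{PF}(S)\subseteq T$ (since $M-M=S\cup\operatorname{PF}(S)$) to control the sums $z+x$.
\item As a further fallback, I would compare Apéry sets with respect to $m$ via Theorem~\ref{th:ap-smi}. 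The aim there is to show that $T-(T-M)=M$ forces the Apéry set of $M$ to be $m$ plus that of $T$.
\end{itemize}
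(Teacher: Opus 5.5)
Your arguments for (1)$\Rightarrow$(2) and (2)$\Rightarrow$(3) are fine. In (1)$\Rightarrow$(2) you also prove the converse direction via Lemma~\ref{lemma: reflexivity}, which the paper leaves implicit. Fix the sign slip in your display: its middle term should be $m+\bigl(T-(m+(T-I))\bigr)$.

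The genuine gap is (3)$\Rightarrow$(1). Your reduction is correct and is in fact the paper's. Since $T-M=M-2M$ by \ref{D-5}, your facts $\min(T-M)\ge -m$ and ``$-m\in T-M$ forces stability'' are exactly the paper's $M+(M-2M)\subseteq\mathbb{N}$ and ``$0\in M+(M-2M)$ iff $M$ is stable''. However, when $\min(T-M)>-m$ you never exhibit an element of $(T-(T-M))\setminus M$. The claim $m+\operatorname{f}(T)\notin M$ is left open and the alternatives are only listed. As written, the implication is not proved.

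The missing step is short. Take $z=\operatorname{f}(S)$, your second fallback and the paper's choice.
\begin{itemize}
\item Suppose $x\in T-M$ satisfies $x\ge -m+1$. Then $x+y\ge 1$ for every $y\in M$.
\item So $\operatorname{f}(S)+x+y\ge\operatorname{c}(S)$, and hence this element lies in $M$.
\item Thus $\operatorname{f}(S)+x\in M-M=T$, that is, $\operatorname{f}(S)\in T-(T-M)$.
\item But $\operatorname{f}(S)\notin M$, contradicting (3).
\end{itemize}
The paper writes this as $\operatorname{f}(S)+(M+(M-2M))\subseteq M$, which holds because $0\notin M+(M-2M)\subseteq\mathbb{N}$.

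Your first candidate also works once you observe the following.
\begin{itemize}
\item Since $\operatorname{f}(T)\notin M-M$, there is $y\in M$ with $\operatorname{f}(T)+y\notin M$.
\item If $m+\operatorname{f}(T)\in M$, then $y\neq m$, so $y>m$.
\item Then $\operatorname{f}(T)+(y-m)>\operatorname{f}(T)$ lies in $T=M-M$, giving $\operatorname{f}(T)+y\in M$, a contradiction.
\end{itemize}
Either way, the Ap\'ery-set fallback is unnecessary.
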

\begin{proof}
Let $m$ be the multiplicity and $f$ be the Frobenius number of $S$.

\emph{(1) implies (2)} The numerical semigroup $S$ has maximal embedding dimension if and only if $M$ is stable, that is, $M-M=M-m$. This fact immediately implies that any ideal of $S$ is also an ideal of $M-m=M-M$. Moreover, from Proposition
~\ref{prop: J-reflexivity}, we know that a reflexive non principal ideal $I$ is also
$M$-reflexive, and therefore $(M-m)-((M-m)-I)=M-(M-I)=I$ (where the first equality follows by \ref{D-1}), i.e. $I$ is a reflexive ideal of $M-M$.


\emph{(2) implies (3)} This implication is clear since $M$ is always reflexive (Lemma~\ref{lem:ic-implies-ref}) and non-principal.
    
\emph{(3) implies (1)} Notice that $M+(M-2M) \subseteq \mathbb{N}$, indeed $\min(M-2M) \geq -m$, otherwise $2m+\min(M-2M) < m$ which is a contradiction since $2m+\min(M-2M) \in M$.

We shall now prove that $M$ is stable if and only if $0 \in M+(M-2M)$. If $2M=M+m$, 
then $$M+(M-2M)=M+(M-(M+m))=(M+(M-M))-m=M-m \ni 0\,.$$ Conversely, if $0 \in M+(M-2M)$, there exists $x \in M$ such that $-x \in M-2M$, thus $2M \subseteq M+x$ and so $2M=M+x$. This implies $2m=\min(2M)=\min(M+x)=m+x$, i.e. $2M=M+m$.

Therefore, if $S$ does not have maximal embedding dimension, $0 \not\in M+(M-2M)$ and thus $f+(M+(M-2M)) \subseteq M$ which yields $$f \in M-(M+(M-2M)) =(M-M)-((M-M)-M)$$ (here we repeatedly use \ref{D-5}) and thus $M$ is not $(M-M)$-reflexive.
\end{proof}





\begin{example}
The following examples (\cite[Example 5.5]{CDS}) illustrate the difference between Theorem~\ref{thm:AG} and Theorem~\ref{thm:MED}.
    \begin{enumerate}
        \item Let $S_1=\langle 4,5,7 \rangle$ and $M=\operatorname{M}(S_1)$. We have $\operatorname{K}(S_1)=\{0,3\}+S_1$ and a direct computation shows that $\operatorname{K}(S_1)\subseteq M-M$, thus $S_1$ is almost symmetric but it does not have maximal embedding dimension. In fact, the ideal $I=\{5,7,8\}+S_1$ is $S_1$-reflexive but not $(M-M)$-reflexive as $6 \in ((M-M)-((M-M)-I))\setminus I$ (the existence of such ideal is ensured by Theorem~\ref{thm:MED}).
        \item Consider the maximal embedding dimension numerical semigroup $S_2=\langle 3,7,8 \rangle$ and let $I=\{6,8,10\}+S_2$. A direct computation shows that $I$ is an ideal of $\operatorname{M}(S_2)-\operatorname{M}(S_2)$, but $I$ is not $S_2$-reflexive since $7 \in I^v\setminus I$. Therefore, by Theorem~\ref{thm:AG}, $S_2$ is not almost symmetric.
    \end{enumerate}
\end{example}



\begin{corollary}\label{cor:Arf}
    For a numerical semigroup $S$, the following conditions are equivalent:
    \begin{enumerate}
        \item $S$ is Arf;
        \item an ideal $I$ of $S$ is reflexive if and only if $I$ is the translation of an integrally closed ideal;
        \item an ideal $I \in \mathfrak{I}_0(S)$ is reflexive if and only if $I$ is NR-irredudible.
    \end{enumerate}
\end{corollary}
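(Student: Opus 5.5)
We prove (1)$\Rightarrow$(2)$\Rightarrow$(3)$\Rightarrow$(1). Throughout, reflexivity and being a translate of an integrally closed ideal are both invariant under translation. So (2) and (3) say the same thing once restricted to normalized ideals, because of Theorem~\ref{th:R-irr}: a normalized reflexive ideal is NR-irreducible exactly when $\widetilde I$ is integrally closed. Concretely, for $I\in\mathfrak{I}_0(S)$, saying that $I$ is a translate of an integrally closed ideal amounts to saying that $\widetilde I$ is integrally closed. Indeed, if $I=z+S(s)$, then $\operatorname{c}$ forces $\widetilde I=S(s)$ when $s\le \operatorname{c}(S)$, and otherwise $\widetilde I=\operatorname{C}(S)=S(\operatorname{c}(S))$. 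Hence (2)$\Leftrightarrow$(3): given (3), a reflexive $I$ has its normalized translate $-\min(I)+I$ reflexive, hence NR-irreducible, hence a translate of an integrally closed ideal. The converse direction in (2) is Lemma~\ref{lem:ic-implies-ref}, and (3) follows from (2) by the same dictionary together with Lemma~\ref{lem:ic-r-irred}.

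For (1)$\Rightarrow$(2), one direction is again Lemma~\ref{lem:ic-implies-ref}. For the other, let $I$ be reflexive; we may assume $I$ is normalized and not principal, since $S$ itself is integrally closed. By Corollary~\ref{cor:m+bar I-bar I} we have $\min(\widetilde I)+(\bar I-\bar I)\subseteq \widetilde I\subseteq \bar I$. Since $S$ is Arf, $\bar I$ is stable, i.e. $\bar I-\bar I=\bar I-\min(\bar I)$. Because $\min(\widetilde I)=\min(\bar I)$, the left-hand side equals $\bar I$, so $\widetilde I=\bar I$ is integrally closed, and therefore $I$ is a translate of an integrally closed ideal.

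For (2)$\Rightarrow$(1), the main step is to show that every integrally closed ideal $J=S(s)$ is stable. Let $s$ range over $S$ with $s$ at most the conductor; larger $s$ give translates of $\operatorname{C}(S)$, which is stable. The candidate test ideal is $J-J$, or better, its translate. By Remark~\ref{rem:char-ref}, $S-J$ is reflexive; more useful is that $J-J=S-(J-\min...)$. The approach is as follows. By Lemma~\ref{lem:s-i=bar i-i} applied to $I=J-s$ (where $\widetilde I=J$ and $\bar I=J$), we get $S-J=J-J$. Hence $J-J=S-J$ is reflexive by Remark~\ref{rem:char-ref}. By (2), $J-J$ is a translate of an integrally closed ideal; since it is an over-semigroup, it is normalized, so $J-J=-t+S(t)$ for some $t\in S$. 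Stability should then follow by showing that $t=s$. The inclusion $s+(J-J)\subseteq J$ holds always, and we want equality. Using $J-J=-t+S(t)$ and $\operatorname{c}(J-J)$, we have $\operatorname{c}(J-J)=\operatorname{c}(S)-t$. On the other hand, $\operatorname{f}(J)-s\notin J-J$, since $s\in J$ and $\operatorname{f}(J)\notin J$; this gives $\operatorname{c}(J-J)\ge \operatorname{c}(S)-s$, so $t\le s$. To get $t\ge s$, note that $J-J\subseteq -t+S$ implies $t+(J-J)\subseteq S$, and elements of $J-J$ that are positive and less than... This comparison is the expected obstacle. The first thing to try is the Frobenius/conductor computation together with $J+(J-J)\subseteq J$, which gives $s-t+ S(t)\subseteq S$ at least for $s\ge t$. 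If the direct comparison stalls, the fallback is the Apéry description from Theorem~\ref{th:ap-smi}, or an induction on $s$ along the elements of $S$ (the classical Arf characterization $S(s)-s$ being semigroups).
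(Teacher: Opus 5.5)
Your (2)$\Leftrightarrow$(3) dictionary (Theorem~\ref{th:R-irr}, Lemma~\ref{lem:ic-r-irred}, Lemma~\ref{lem:ic-implies-ref}) is correct, as is your (1)$\Rightarrow$(2) argument (Corollary~\ref{cor:m+bar I-bar I} plus stability of $\bar I$); both coincide with the paper's proof.

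\textbf{The gap is in (2)$\Rightarrow$(1).} You correctly reduce to $J=S(s)$ with $s\le\operatorname{c}(S)$. From there you do the following correctly:
\begin{enumerate}
\item You get $S-J=J-J$ from Lemma~\ref{lem:s-i=bar i-i}, so $J-J$ is reflexive.
\item By (2) you write $J-J=-t+S(t)$.
\item You prove $t\le s$.
\end{enumerate}
You then stop, calling $t\ge s$ ``the expected obstacle'' and listing fallbacks (Ap\'ery sets, induction) that you never carry out. So this implication is not proved as written. Moreover, the inclusion you try to exploit, $s+(J-J)\subseteq J$ (equivalently $s-t+S(t)\subseteq J$), only reproduces $t\le s$ when you compare conductors. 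It cannot give the reverse inequality.

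\textbf{The missing ingredient is an upper bound on $\operatorname{c}(J-J)$, and it is immediate.} Take $x\ge\operatorname{c}(S)-s$ and $j\in J$. Then $j\ge s$, so $x+j\ge\operatorname{c}(S)$, hence $x+j\in\operatorname{C}(S)\subseteq S(s)=J$. Thus $\operatorname{c}(S)-s+\mathbb{N}\subseteq J-J$, i.e.\ $\operatorname{c}(J-J)\le\operatorname{c}(S)-s$.

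With your lower bound this gives $\operatorname{c}(J-J)=\operatorname{c}(S)-s$; this is Remark~\ref{rem:C x+(I-I)}, $\operatorname{C}(s+(J-J))=\operatorname{C}(J)$. You may take $t\le\operatorname{c}(S)$, because $-t+S(t)=\mathbb{N}$ whenever $t\ge\operatorname{c}(S)$. Then $\operatorname{c}(-t+S(t))=\operatorname{c}(S)-t$, which forces $t=s$. Hence $J-J=-s+J$, which is exactly the stability of $J$.

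This is the paper's argument, phrased there for the translate $s+(J-J)=s+(S-J)$. That ideal is reflexive, has conductor $\operatorname{c}(S)$ and has minimum $s$. So by (2) and Remark~\ref{rem:i=j} it is itself an integrally closed ideal with minimum $s$, namely $S(s)=J$.
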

\begin{proof}
    \emph{(1)  implies (2)}. Every integrally closed ideal is reflexive by Lemma~\ref{lem:ic-implies-ref}, therefore we need to prove the converse. Let $I \subseteq S$ be a reflexive ideal of $S$. Since $S$ is Arf, every integrally closed ideal is stable, and thus $\bar{I}=\min(\bar{I})+(\bar{I}-\bar{I})$. Therefore, applying Corollary~\ref{cor:m+bar I-bar I}, we have
    $\bar{I}=\min(\bar{I})+(\bar{I}-\bar{I}) \subseteq \widetilde{I} \subseteq \bar{I}$. It follows that $\widetilde{I}=\bar{I}$. Thus, $\widetilde{I}$ is integrally closed
    and so is $I$.
    
    \noindent \emph{(2) implies (1)}. Let $I$ be an integrally closed ideal of $S$. Then, $I=\bar{I}=\widetilde{I}$ and by Lemma~\ref{lem:s-i=bar i-i}, $\min(I)+(I-I)=\min(I)+(S-I)$. Hence, by Remark~\ref{rem:char-ref}, $\min(I)+(I-I)$ is reflexive and thus it is integrally closed by assumption. Since $\min(I-I)=0$, it follows that $I=\min(I)+(I-I)$, which is in turn equivalent to the stability of $I$. Therefore every integrally closed ideal of $S$ is stable, whence $S$ is Arf.

    The equivalence between (2) and (3) follows from Theorem~\ref{th:R-irr}.
\end{proof}




\section{Trace ideals}\label{sec:trace-ideals}

In recent years, the study of trace ideals and their relation with the ring and other families of modules (e.g. canonical modules, reflexive modules, stable ideals) has attracted considerable attention (see for instance \cite{HHS, HL22, GIK20, EG23}). Indeed, these ideals, as well as reflexive ones, encode useful information about the ring. The results in this section are inspired by the ones in \cite{GIK20, EG23, IK24}, in which the connections between rings, their trace ideals and their reflexive modules are studied in a commutative and not necessarily Noetherian setting.

In this section, we study a correspondence between the set of reflexive trace ideals of a numerical semigroup and its reflexive over-semigroups, and moreover, prove how trace ideals can be used to characterize Arf numerical semigroups in two different ways, both related to reflexivity.

Given an ideal $I$ of $S$, the \emph{trace (ideal)} of $I$ is the ideal $\operatorname{tr}(I)=I+(S-I)$. An ideal $J$ is said to be a \emph{trace ideal} if there exists an ideal $I$ of $S$ such that $J=\operatorname{tr}(I)$.
Notice that $\operatorname{tr}(I)$ is invariant under translations of $I$ (by \ref{D-1}), and if $I$ is a trace ideal, then $I \subseteq S$.


\begin{remark}\label{rem:i=i+i-i}
    For any $I \in \mathfrak{I}(S)$, we have $I=I+(I-I)$. Indeed, $I=I+0 \subseteq I+(I-I) \subseteq I$.
\end{remark}

By Remark~\ref{rem:I-I-oversemigroup}, this last assertion has a different rephrasing: $I$ is always an ideal of $I-I$. 

\begin{proposition}\label{prop:i=tri}
     If $J$ is a trace ideal of $S$, then $J=\operatorname{tr}(J)$.
\end{proposition}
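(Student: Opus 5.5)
We must show that if $J=I+(S-I)$ for some ideal $I$, then $J+(S-J)=J$. The plan is to prove the two inclusions $\operatorname{tr}(J)\subseteq J$ and $J\subseteq \operatorname{tr}(J)$ separately. Both reduce to locating a few specific elements inside $S-J$ (or $S-I$) by means of \ref{D-5} and \ref{D-6}.

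For the inclusion $\operatorname{tr}(J)\subseteq J$, first note that $J\subseteq S$: by definition of $S-I$ we have $I+(S-I)\subseteq S$. Hence $J$ is an integral ideal of $S$. Consequently, for any $x\in S-J$ and $j\in J$ we get $x+j\in S$, so $J+(S-J)\subseteq S$. That alone only gives $\operatorname{tr}(J)\subseteq S$, which is not enough. Instead I would show directly that $S-J\subseteq J-J$, so that Remark~\ref{rem:i=i+i-i} gives $J+(S-J)\subseteq J+(J-J)=J$.

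To prove $S-J\subseteq J-J$, take $x\in S-J$. Then $x+I+(S-I)\subseteq S$, so by \ref{D-6} and \ref{D-5} we have $x+I\subseteq S-(S-I)$. This is not immediately $I$, so I would argue elementwise instead. For $i\in I$ and $y\in S-I$, write $x+i+y=(x+y)+i$. It suffices to show $x+y\in S-I$. This holds because $x+y+I\subseteq x+J\subseteq S$, using $y+I\subseteq J$, which in turn holds since $y+I\subseteq I+(S-I)$. Hence $x+i+y=i+(x+y)\in I+(S-I)=J$. So $x+J\subseteq J$, that is, $x\in J-J$.

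For the reverse inclusion $J\subseteq\operatorname{tr}(J)$, I would show $S\subseteq S-J$, or at least that $0\in S-J$, which follows from $J\subseteq S$. Then $J=J+0\subseteq J+(S-J)$. Combining the two inclusions gives $J=\operatorname{tr}(J)$.

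The step I expect to be the main obstacle is the first inclusion, and specifically the elementwise check that $x+y\in S-I$. Everything else is immediate from the definitions, \ref{D-6}, and Remark~\ref{rem:i=i+i-i}.
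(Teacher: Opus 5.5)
Your proof is correct and is essentially the paper's argument. Your elementwise check that every $x\in S-J$ satisfies $x+(S-I)\subseteq S-I$ is just the identity $S-(I+(S-I))=(S-I)-(S-I)$ from \ref{D-5}. The paper then concludes in one line by applying Remark~\ref{rem:i=i+i-i} to $S-I$, whereas you split into two inclusions and apply it to $J$ (together with $0\in S-J$).
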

\begin{proof}
    Let $I \in \mathfrak{I}(S)$ such that $J=\operatorname{tr}(I)$, then
    \begin{align*}
        \operatorname{tr}(J)&=\operatorname{tr}(I)+(S-\operatorname{tr}(I))\\[0.25em]
        &=I+(S-I)+(S-(I+(S-I)))\\[0.25em]
        &=I+(S-I)+((S-I)-(S-I))\\[0.25em]
        &=I+(S-I)\\[0.25em]
        &=\operatorname{tr}(I)\\[0.25em]
        &=J,
    \end{align*}
    where the third equality follows from~\ref{D-5} and the fourth by Remark~\ref{rem:i=i+i-i} applied to $S-I$.
\end{proof}

Thus, we have that $I$ is a trace ideal if and only if 
\[\operatorname{tr}(I)=I.\]


The numerical semigroup analogue to \cite[Proposition~A.1]{HHS} is the following.

\begin{proposition}\label{prop:cond tr(I)}
    Let 
    $I \in \mathfrak{I}(S)$. Then 
    \[\operatorname{C}(S) \subseteq \operatorname{tr}(I) \subseteq S.\] 
\end{proposition}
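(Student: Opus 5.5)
The upper inclusion comes straight from the definitions. The lower inclusion needs an explicit element of $\operatorname{tr}(I)$ lying near $\operatorname{c}(S)$.

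\emph{Upper inclusion.} By the definition of $S-I$ we have $I+(S-I)\subseteq S$. This is exactly $\operatorname{tr}(I)\subseteq S$, which is also \ref{D-6} applied to $S-I\subseteq S-I$.

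\emph{Lower inclusion: reduction.} Since $\operatorname{tr}(I)$ is invariant under translations of $I$, we may replace $I$ by its translate $\widetilde{I}$. Then $\operatorname{c}(I)=\operatorname{c}(S)$, and hence $\operatorname{f}(I)=\operatorname{f}(S)$. It suffices to exhibit one element $i_0\in I$ and one element $y_0\in S-I$ with $i_0+y_0=\operatorname{c}(S)$ and $y_0+\mathbb{N}\subseteq S-I$. Indeed, we would then get
\[
\operatorname{c}(S)+\mathbb{N}=i_0+(y_0+\mathbb{N})\subseteq I+(S-I)=\operatorname{tr}(I).
\]

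\emph{Lower inclusion: the choice of $i_0$ and $y_0$.} I take $i_0=\min(I)$ and $y_0=\operatorname{c}(S)-\min(I)$. Two facts need checking.

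First, $y_0+\mathbb{N}\subseteq S-I$. Let $y\ge y_0$ and $i\in I$. Then $y+i\ge y_0+\min(I)=\operatorname{c}(S)$, so $y+i\in S$. Hence $y\in S-I$, and in fact $\operatorname{c}(S)-\min(I)+\mathbb{N}\subseteq S-I$.

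Second, $i_0+y_0=\operatorname{c}(S)$, which holds by construction.

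This gives $\operatorname{C}(S)\subseteq\operatorname{tr}(I)$. The argument never uses the reduction to $\widetilde{I}$; it works directly for any $I$, because translating $I$ shifts $\min(I)$ and $S-I$ compatibly.

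\emph{Expected obstacle.} The main subtlety is noticing that the bound $y\ge\operatorname{c}(S)-\min(I)$ forces $y+i$ into the conductor ideal for every $i\in I$. Once that is seen, the pairing with $\min(I)$ gives the whole conductor ideal.
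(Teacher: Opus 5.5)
Your proof is correct and follows essentially the same route as the paper: both pair $\min(I)\in I$ with the tail $\operatorname{c}(S)-\min(I)+\mathbb{N}\subseteq S-I$ to obtain $\operatorname{C}(S)\subseteq I+(S-I)$. The paper gets that tail by computing $\operatorname{C}(S-I)=\operatorname{c}(S)-\min(I)+\mathbb{N}$ exactly via \ref{D-1} and \ref{D-5}, whereas you check only the needed inclusion elementwise; as you note yourself, the reduction to $\widetilde{I}$ is superfluous.
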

\begin{proof}
    Let $I \in \mathfrak{I}(S)$. The inclusion $\operatorname{tr}(I) \subseteq S$ follows from the definition of trace. In order to prove the other inclusion, we shall first compute the conductor ideal of $S-I$; applying equation \eqref{eq:conductor-quotient} of Section 2,~\ref{D-5} twice, and \ref{D-1} we get
    \begin{align*}
        \operatorname{C}(S-I)&=(S-I)-\mathbb{N}\\
        &=S-(I+\mathbb{N})\\
        &=(S-\mathbb{N})-I\\
        &=(\operatorname{c}(S)+\mathbb{N})-I\\
        &=\operatorname{c}(S)+(\mathbb{N}-I)\\
        &=\operatorname{c}(S)-\min(I)+\mathbb{N},
    \end{align*}
    where the last equality follows from the fact that $\mathbb{N}-I=-\min(I)+\mathbb{N}$. Indeed, clearly, $\min(I)+(\mathbb{N}-I) \subseteq \mathbb{N}$ and so $\mathbb{N}-I\subseteq -\min(I)+\mathbb{N}$. For the other inclusion, if $x \in \mathbb{Z}$ is such that $x \geq -\min(I)$, then $x+i \geq 0$ for all $i \in I$ and thus $x+I \subseteq \mathbb{N}$, whence $x\in \mathbb{N}-I$. This proves the equality. Therefore, the following holds 
    \begin{align*}
        \operatorname{C}(S)&=\min(I)+(\operatorname{c}(S)-\min(I)+\mathbb{N})\\
        &\subseteq I+(\operatorname{C}(S-I))\\
        &\subseteq I+(S-I)\\
        &=\operatorname{tr}(I).
    \end{align*}
    We conclude that $\operatorname{C}(S) \subseteq\operatorname{tr}(I) \subseteq S$ for any $I \in \mathfrak{I}(S)$.
\end{proof}

In light of the previous Proposition and of the fact that if $C \subseteq I \subseteq S$, then $ C \subseteq I^v \subseteq S$ (by Lemma~\ref{lem:I tilde in S}), it is natural to compare the properties of being reflexive and a trace ideal. In general, the properties of being trace and being reflexive do not imply each other, as the next example shows.

\begin{example}\label{ex:tr-v}
Let $S=\langle 7,10,13\rangle$ and let $I_1=\{7,36,39\}+S$, $I_2=\{7,26\}+S$, $I_3=\{7,23,26\}+S$, and $I_4=\{7,13\}+S$.
The following computations using \texttt{NumericalSgps} \cite{numericalsgps} show that $I_1$ is reflexive but not a trace ideal, $I_2$ is neither reflexive nor a trace ideal, $I_3$ is both reflexive and a trace ideal, and $I_4$ is not reflexive but it is a trace ideal.
\begin{verbatim}
gap> s:=NumericalSemigroup(7,10,13);;
gap> i1:=[7, 36, 39] + s;;
gap> s-(s-i1)=i1;   #is reflexive?
gap> i1=i1+(s-i1);  #is trace?
true
false
gap> i2:=[7, 26] + s;;
gap> s-(s-i2)=i2; i2=i2+(s-i2);
false
false
gap> i3:=[7, 23, 26] + s;;
gap> s-(s-i3)=i3; i3=i3+(s-i3);
true
true
gap> i4:=[7, 13]+s;;
gap> s-(s-i4)=i4; i4=i4+(s-i4);
false
true
\end{verbatim}
\end{example}


\begin{remark}\label{rem:int closed -> trace}
    Every integrally closed ideal $J \in \mathfrak{I}(S)$ which contains the conductor ideal is a trace ideal since, by Lemma~\ref{lem:s-i=bar i-i}, $S-J=J-J$, and then by Remark~\ref{rem:i=i+i-i}, we have $J+(S-J)=J+(J-J)=J$. 
\end{remark}

Example~\ref{ex:tr-v} also shows that not all integral ideals containing the conductor ideal are trace ideals, while Proposition~\ref{prop:cond tr(I)}, shows that we can focus on integral ideals containing the conductor ideal. The following lemma illustrates a characteristic property of trace ideals, and, moreover, it generalizes Remark~\ref{rem:int closed -> trace}, showing that to be trace it is a weaker condition than to be integrally closed.

\begin{lemma}
    An ideal $I$ such that $\operatorname{C}(S) \subseteq I \subseteq S$ is a trace ideal if and only if $I-I=S-I$. 
\end{lemma}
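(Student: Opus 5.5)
The plan is to prove the two implications separately. Throughout, $I$ is an ideal with $\operatorname{C}(S)\subseteq I\subseteq S$. The inclusion $I-I\subseteq S-I$ always holds in this situation, by~\ref{D-2}, since $I\subseteq S$. So the content of the lemma is the reverse inclusion $S-I\subseteq I-I$.

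\emph{If $I-I=S-I$, then $I$ is a trace ideal.} This direction is immediate. By definition and Remark~\ref{rem:i=i+i-i},
\[
\operatorname{tr}(I)=I+(S-I)=I+(I-I)=I,
\]
so $I$ is a trace ideal, by the characterization following Proposition~\ref{prop:i=tri}. This is the same argument as in Remark~\ref{rem:int closed -> trace}, where Lemma~\ref{lem:s-i=bar i-i} supplied the equality $S-J=J-J$.

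\emph{If $I$ is a trace ideal, then $I-I=S-I$.} Suppose $I$ is a trace ideal. By Proposition~\ref{prop:i=tri}, $I=\operatorname{tr}(I)=I+(S-I)$. Now take $x\in S-I$. Then
\[
x+I\subseteq I+(S-I)=I,
\]
so $x\in I-I$. Equivalently, $(S-I)+I\subseteq I$ gives $S-I\subseteq I-I$ by~\ref{D-6}. Combined with the inclusion $I-I\subseteq S-I$ noted above, this yields $I-I=S-I$.

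I do not expect a real obstacle. The only point to check is that the hypotheses are used correctly. The inclusion $I\subseteq S$ is needed for $I-I\subseteq S-I$. The hypothesis $\operatorname{C}(S)\subseteq I$ is not needed for the equivalence itself; it only situates the statement, since Proposition~\ref{prop:cond tr(I)} shows that every trace ideal satisfies it.
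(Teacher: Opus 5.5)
Your proof is correct and follows essentially the same route as the paper. Both arguments get $I-I\subseteq S-I$ from~\ref{D-2}, the reverse inclusion from $I=\operatorname{tr}(I)$ (Proposition~\ref{prop:i=tri}) together with~\ref{D-6}, and the converse via Remark~\ref{rem:i=i+i-i}. Your observation that the hypothesis $\operatorname{C}(S)\subseteq I$ plays no role in the equivalence is also accurate; the paper's proof does not use it either.
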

\begin{proof}
    Notice that, from the choice of $I$, we have that $I-I \subseteq S-I$ by~\ref{D-2}. If $I$ is a trace ideal, then $I=\operatorname{tr}(I)$ by Proposition~\ref{prop:i=tri} and the inclusion $I+(S-I) \subseteq I$ holds, therefore $S-I \subseteq I-I$ by \ref{D-6}. 
    
    For the other implication, if $S-I=I-I$, then $\operatorname{tr}(I)=I+(S-I)=I+(I-I)=I$ by Remark~\ref{rem:i=i+i-i}, and therefore $I$ is a trace ideal.
\end{proof}

\begin{lemma}\label{lem: I-I}
    If $I \in \mathfrak{I}(S)$ is reflexive, then $I-I=S-\operatorname{tr}(I)$. In particular, if $I$ is reflexive, then $I-I$ is reflexive.
\end{lemma}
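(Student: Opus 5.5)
The plan is to compute $S-\operatorname{tr}(I)$ directly with the rules \ref{D-1}--\ref{D-8} and then use reflexivity to rewrite one of the factors. By definition $\operatorname{tr}(I)=I+(S-I)$, so \ref{D-5} gives
\[
S-\operatorname{tr}(I)=S-\bigl((S-I)+I\bigr)=(S-(S-I))-I=I^v-I .
\]
This identity holds for every ideal $I$. When $I$ is reflexive we have $I^v=I$, and the right-hand side becomes $I-I$. This proves the first claim.

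For the second claim, note that $I-I=S-\operatorname{tr}(I)$ has the form $S-J$ with $J=\operatorname{tr}(I)$ an ideal of $S$. By Remark~\ref{rem:char-ref}, which is just \ref{D-8}, every ideal of this form is reflexive. Hence $I-I$ is reflexive.

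There is no serious obstacle here. The only care needed is to apply \ref{D-5} in the right order, so that the subtraction of the sum $(S-I)+I$ is split into subtracting $S-I$ first and then $I$. This is legitimate because addition of ideals is commutative.
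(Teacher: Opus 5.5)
Your proposal is correct and matches the paper's proof: \ref{D-5} gives $(S-(S-I))-I=S-\operatorname{tr}(I)$, reflexivity replaces $S-(S-I)$ by $I$, and Remark~\ref{rem:char-ref} yields the second claim. You also note that $S-\operatorname{tr}(I)=I^v-I$ holds for every ideal $I$; this is a harmless refinement of the same argument.
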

\begin{proof}
    The reflexivity of $I$ implies $I-I=(S-(S-I))-I$, therefore $I-I=S-(I+(S-I))=S-\operatorname{tr}(I)$ by~\ref{D-5}.
    The last assertion follows from Remark~\ref{rem:char-ref}. 
\end{proof}

Let $S$ be a numerical semigroup. Let $C=\operatorname{C}(S)$ and let $\mathcal{O}(S)$ be the set of numerical over-semigroups of $S$. Consider the sets
\[
A=\{I\in\mathfrak{I}(S)\,:\,I=I^v \text{ and } C \subseteq I \subseteq S\}\quad\text{and}\quad B=\{T \in\mathcal{O}(S)\,:\,T^v=T\},
\]
and define
\[
\begin{aligned}
\Phi: &\ A \longrightarrow B, &\quad& \Psi: B \longrightarrow A, \\
   &\,\,I \longmapsto I-I,       &    &\quad\,\,T \longmapsto S-T.
\end{aligned}
\]
Both maps are well-defined. In fact, if $I$ is reflexive, then $I-I$ is a reflexive numerical over-semigroup of $S$ by Remark~\ref{rem:I-I-oversemigroup} and Lemma~\ref{lem: I-I}, and if $T \in \mathcal{O}(S)$ is reflexive, we have $S \subseteq T \subseteq\mathbb{N}$, thus by \eqref{eq:conductor-quotient} and~\ref{D-2}, $C=S-\mathbb{N} \subseteq S-T \subseteq S-S=S$; $S-T$ is a reflexive ideal of $S$ by Remarks~\ref{rem:char-ref} and~\ref{rem:oversemigroup-is-ideal}.  

The following result is a numerical version of \cite[Lemma 2.6]{GIK20}.

\begin{proposition}\label{prop:phi and psi}
    We have $\Phi\circ \Psi=\operatorname{id}_B$, in particular $\Psi$ is injective and $\Phi$ is surjective. Moreover, the image of $\Psi$ is the set of reflexive trace ideals of $S$. 
\end{proposition}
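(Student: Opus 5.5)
We first show $\Phi\circ\Psi=\operatorname{id}_B$. Take $T\in B$ and set $I=\Psi(T)=S-T$. We need $(S-T)-(S-T)=T$.

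For the inclusion $T\subseteq (S-T)-(S-T)$, take $t\in T$ and $x\in S-T$. Then $t+x+T\subseteq x+T\subseteq S$, since $t+T\subseteq T$. Hence $t+x\in S-T$.

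For the reverse inclusion we use \ref{D-5}:
\[
(S-T)-(S-T)=S-(T+(S-T)).
\]
Since $0\in T$, we have $S-T\subseteq T+(S-T)$, and \ref{D-2} gives $S-(T+(S-T))\subseteq S-(S-T)=T^v=T$.

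So $\Phi(\Psi(T))=T$. It follows formally that $\Psi$ is injective and $\Phi$ is surjective.

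Next we show that every element of the image of $\Psi$ is a reflexive trace ideal. We already know $S-T$ is reflexive. Its trace is
\[
\operatorname{tr}(S-T)=(S-T)+(S-(S-T))=(S-T)+T=S-T,
\]
using $T^v=T$ and the fact that $S-T$ is an ideal of $T$ (Remark~\ref{rem:S-T-ideal-T}). Hence $S-T$ is a trace ideal by Proposition~\ref{prop:i=tri}.

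Conversely, let $I$ be a reflexive trace ideal. By Proposition~\ref{prop:cond tr(I)} we have $C\subseteq I=\operatorname{tr}(I)\subseteq S$, so $I\in A$. Put $T=I-I\in B$; it is reflexive by Lemma~\ref{lem: I-I}. We claim $\Psi(T)=I$, that is, $S-(I-I)=I$.

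By Lemma~\ref{lem: I-I}, $I-I=S-\operatorname{tr}(I)=S-I$. Therefore
\[
S-(I-I)=S-(S-I)=I^v=I.
\]
So $I$ lies in the image of $\Psi$, and the image of $\Psi$ is exactly the set of reflexive trace ideals.

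The main obstacle is the inclusion $(S-T)-(S-T)\subseteq T$ in the first step. This is the only place where the reflexivity of $T$ is essential, and the trick is to rewrite the double difference via \ref{D-5} and compare it with $S-(S-T)$ using $0\in T$. The remaining steps are direct applications of Lemma~\ref{lem: I-I} and Proposition~\ref{prop:i=tri}.
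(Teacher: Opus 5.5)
Your proof is correct and follows essentially the same route as the paper: you get $\Phi(\Psi(T))=S-(T+(S-T))$ via \ref{D-5} combined with $T^v=T$, the trace property of $S-T$ from $S-T$ being an ideal of $T$ (Remark~\ref{rem:S-T-ideal-T}), and $\Psi(I-I)=S-(S-\operatorname{tr}(I))=I$ via Lemma~\ref{lem: I-I}. One small point: Proposition~\ref{prop:i=tri} proves the converse of what you cite it for, but the direction you need (if $\operatorname{tr}(J)=J$ then $J$ is a trace ideal) is immediate from the definition, and the paper sidesteps it by noting directly that $S-T=T+(S-T)=\operatorname{tr}(T)$.
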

\begin{proof}
    Let $T \in B$. Then 
    \begin{multline*}
     \Phi(\Psi(T))=\Phi(S-T)=(S-T)-(S-T)=S-(T+(S-T))\\
            =(S-(S-T))-T=T-T=T,
    \end{multline*}
    where the third and fourth equalities follow from~\ref{D-5} and the last  from the fact that $T$ is a numerical semigroup. This proves $\Phi \circ \Psi=\operatorname{id}_B$.
    
    As for the second assertion, we need to show that 
    \[\operatorname{im}\Psi=\{I \in \mathfrak{I}(S)\,:\,I=I^v=\operatorname{tr}(I)\}\]
    (the set on the right-hand side is non-empty by Remark~\ref{rem:int closed -> trace} and by Proposition~\ref{prop:cond tr(I)} this set is contained in $A$). If $I$ is a reflexive trace ideal, then $I-I\in B$ and $\Psi(I-I)=S-(I-I)=S-(S-\operatorname{tr}(I))$ (by Lemma~\ref{lem: I-I}). Therefore, $\Psi(I-I)=S-(S-\operatorname{tr}(I))=S-(S-I)=I$ by the reflexivity of $I$. Hence, $\{I \in \mathfrak{I}(S)\,:\,I=I^v=\operatorname{tr}(I)\} \subseteq \operatorname{im}\Psi$. Conversely, for every reflexive $T \in \mathcal{O}(S)$, $\Psi(T)=S-T$ is a trace ideal since by Remark~\ref{rem:S-T-ideal-T}, $S-T$ is an ideal of $T$, and so $S-T=T+(S-T)=\operatorname{tr}(T)$.
    Thus the claim follows.
\end{proof}


\begin{remark}\label{rem:C x+(I-I)}
Let $I \in \mathfrak{I}(S)$ and $x=\min(I)$, then, applying \eqref{eq:conductor-quotient},~\ref{D-5}, and~\ref{D-1},
\begin{align*}
\operatorname{C}(x+(I-I))&=(x+(I-I))-\mathbb{N}\\
&=x+(I-(I+\mathbb{N}))\\
&=x+(I-(x+\mathbb{N}))\\
&=I-\mathbb{N}\\
&=\operatorname{C}(I)\,.
\end{align*}    
\end{remark}

The following remark is a numerical version of \cite[Corollary 2.2]{GIK20}.
\begin{remark}\label{rem:i=j}
    Let $I, J \in \mathfrak{I}(S)$. If $I=t+J$ for some $t \in \mathbb{Z}$ and $\operatorname{c}(I)=\operatorname{c}(J)$, then $I=J$, that is, $t=0$. Indeed, assume $t \neq 0$. If $t<0$, then $\operatorname{c}(J)+t \leq \operatorname{c}(J)-1=\operatorname{c}(I)-1=\operatorname{f}(I)$ and $\operatorname{c}(J)+t+\mathbb{N} \subseteq t+J=I$, but this implies $\operatorname{f}(I) \in I$, which is a contradiction. The case $t>0$ follows by applying the same argument to $J=-t+I$.
\end{remark}

\begin{proposition}\label{prop:phi is inj}
For a numerical semigroup $S$, the following are equivalent:
\begin{enumerate}
    \item $S$ is Arf;
    \item the map $\Phi$ is injective.
\end{enumerate}
\end{proposition}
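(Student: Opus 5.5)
Since $\Phi\circ\Psi=\operatorname{id}_B$ (Proposition~\ref{prop:phi and psi}), $\Phi$ is injective if and only if $\Psi$ is surjective. By the description of $\operatorname{im}\Psi$, this means that every $I\in A$ is a trace ideal. So the plan is to show that $S$ is Arf if and only if every reflexive ideal $I$ with $\operatorname{C}(S)\subseteq I\subseteq S$ is a trace ideal.

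\emph{(1) implies (2).} Let $S$ be Arf and let $I\in A$. I would use Corollary~\ref{cor:Arf}: since $I$ is reflexive, $\widetilde I$ is integrally closed. Because $\operatorname{C}(S)\subseteq I\subseteq S$ and $\operatorname f(S)\notin I$, we have $\operatorname c(I)=\operatorname c(S)$, so $\widetilde I=I$. Hence $I=S(s)$ is integrally closed and contains the conductor ideal. Remark~\ref{rem:int closed -> trace} then gives that $I$ is a trace ideal, so $I\in\operatorname{im}\Psi$. Thus $\Psi$ is surjective and $\Phi$ is injective.

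\emph{(2) implies (1).} Assume $\Phi$ is injective and let $J=S(s)$ be integrally closed. If $s\ge\operatorname c(S)$, then $J=s+\mathbb{N}$, which is stable, so we may assume $s<\operatorname c(S)$; then $\operatorname{C}(S)\subseteq J$ and $J\in A$. The aim is to show $J=s+(J-J)$. Set $I=s+(J-J)$. By Lemma~\ref{lem:s-i=bar i-i}, $J-J=S-J$, so $I=s+(S-J)$ is reflexive by Remark~\ref{rem:char-ref} and~\ref{D-1}. By Remark~\ref{rem:C x+(I-I)}, $\operatorname{C}(I)=\operatorname{C}(J)=\operatorname{C}(S)$. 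Since $s\in S$ and $J-J\subseteq S-J\subseteq\mathbb{N}$, we get $I\subseteq s+(J-J)$ with $s+(J-J)\subseteq J$ (as $J$ is an ideal of $J-J$), so $I\subseteq J\subseteq S$. Therefore $I\in A$.

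Next, compute $\Phi(I)=I-I=(J-J)-(J-J)=J-J$, because $J-J$ is a semigroup; also $\Phi(J)=J-J$. Injectivity of $\Phi$ then forces $I=J$, that is, $J=s+(J-J)=\min(J)+(J-J)$, which is exactly stability of $J$. Hence every integrally closed ideal is stable and $S$ is Arf.

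The main point to check carefully is the containment $I\in A$, in particular that $\operatorname{C}(S)\subseteq I$. I expect this to follow from the conductor equality in Remark~\ref{rem:C x+(I-I)} together with $I\subseteq S$: since $\operatorname{f}(S)\notin I$, the conductor of $I$ is $\operatorname{c}(S)$ and $\operatorname{C}(S)\subseteq I$. Everything else is direct bookkeeping with~\ref{D-1}–\ref{D-6}.
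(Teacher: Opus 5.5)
Your proof is correct. The implication (2)$\Rightarrow$(1) is essentially the paper's argument. You compare $J$ with $\min(J)+(J-J)$, check that the latter lies in $A$, note that both have image $J-J$ under $\Phi$, and read off stability from injectivity. The only variation is that you get reflexivity via $J-J=S-J$ and Remark~\ref{rem:char-ref} rather than Lemma~\ref{lem: I-I}, which is immaterial.

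For (1)$\Rightarrow$(2) you take a genuinely different route. The paper first identifies $A$ with the integrally closed ideals containing $\operatorname{C}(S)$. It then proves injectivity by hand: stability gives $I=\min(I)+(I-I)$, so $\Phi(I)=\Phi(J)$ makes $I$ a translate of $J$, and equal conductors force $I=J$ by Remark~\ref{rem:i=j}. You instead use $\Phi\circ\Psi=\operatorname{id}_B$ to convert injectivity of $\Phi$ into surjectivity of $\Psi$, that is, into the statement that every $I\in A$ is a trace ideal. You then only need two facts: elements of $A$ are integrally closed (Corollary~\ref{cor:Arf}), and integrally closed ideals containing the conductor are trace ideals in any numerical semigroup (Remark~\ref{rem:int closed -> trace}).

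Your route uses the Arf hypothesis only once, through Corollary~\ref{cor:Arf}, and exploits the already computed image of $\Psi$. It also yields the clean reformulation ``$\Phi$ is injective if and only if every reflexive $I$ with $\operatorname{C}(S)\subseteq I\subseteq S$ is a trace ideal.'' This ties the statement directly to Proposition~\ref{prop:arf, ref, tr}, whose (2)$\Rightarrow$(1) uses only that reflexive ideals are translates of trace ideals. The paper's route is more self-contained, since it does not depend on the description of $\operatorname{im}\Psi$, but it invokes stability and the translation rigidity of Remark~\ref{rem:i=j}.

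Two small cleanups. First, $s+(J-J)\subseteq J$ needs only $s\in J$ and $J+(J-J)\subseteq J$; the mentions of $s\in S$ and $J-J\subseteq\mathbb{N}$ are superfluous. Second, $\operatorname{C}(S)=\operatorname{C}(I)\subseteq I$ follows at once from the conductor equality, with no need to argue via $\operatorname{f}(S)\notin I$.
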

\begin{proof}
    \emph{(1)  implies (2)}. Assume that $S$ is Arf. 
    By Corollary~\ref{cor:Arf}, if $I\in A$, then $I$ is the translation of an integrally closed ideal. Let $C=\operatorname{C}(S)$. The condition $C\subseteq I\subseteq S$ forces $\operatorname{c}(S)=\operatorname{c}(I)$, and so $I$ is an integrally closed ideal (by Remark~\ref{rem:i=j}). The converse is also true, since we know that every integrally closed ideal $I$ is reflexive (Lemma~\ref{lem:ic-implies-ref}) and thus if it contains $C$, we deduce $I\in A$. Therefore, $A$ is the set of integrally closed ideals of $S$ containing $C$.
    
    Fixed $I,J \in A$, we have that $I=\min(I)+(I-I)$ and $J=\min(J)+(J-J)$, because every integrally closed ideal of $S$ is stable. Thus, if $I$ and $J$ are such that $I-I=\Phi(I)=\Phi(J)=J-J$, we have 
    \[
    I=\min(I)+(I-I)=\min(I)-\min(J)+\min(J)+(J-J)=\min(I)-\min(J)+J.
    \]
    In particular, there exists $t \in \mathbb{Z}$ such that $I=t+J$, but, since $\operatorname{c}(I)=\operatorname{c}(J)=\operatorname{c}(S)$, we must have $t=0$ by Remark~\ref{rem:i=j}. Hence, $I=J$, that is, $\Phi$ is injective.

\noindent \emph{(2)  implies (1)}. Fix an integrally closed ideal $J$ of $S$, we prove that $J$ is stable in order to show that $S$ is Arf. If $J=S(s)$ with $s\ge \operatorname{c}(S)$, then $J-J=\mathbb{N}=J-s$, indeed it is clear that $J-s=\mathbb{N}$; moreover $J-J \subseteq \mathbb{N}$ (by Remark~\ref{rem:I-I-oversemigroup}), and for every $n \in \mathbb{N}$ and $x \in J$, $x+n \ge s$, thus $J+\mathbb{N} \subseteq J$, i.e. $\mathbb{N} \subseteq J-J$. In particular $J$ is stable.
If $s<\operatorname{c}(S)$, then we prove that $J \in A$. Indeed, by definition, $C \subseteq J \subseteq S$ and $J$ is reflexive by Lemma~\ref{lem:ic-implies-ref}. The ideal $J-J$ is reflexive by Lemma~\ref{lem: I-I}, thus the ideal $\min(J)+(J-J)$ is reflexive too, being a translation of $J-J$. By definition of $J-J$, we have $\min(J)+(J-J) \subseteq J \subseteq S$, and we also have that $\operatorname{c}(\min(J)+(J-J))=\operatorname{c}(J)=\operatorname{c}(S)$ by Remark~\ref{rem:C x+(I-I)}. In particular, $\min(J)+(J-J) \in A$. Moreover,
     \begin{align*}
        \Phi(\min(J)+(J-J))&=(\min(J)+(J-J))-(\min(J)+(J-J))\\[0.25em]
        &=(J-J)-(J-J)\\[0.25em]
        &=J-(J+(J-J))\\[0.25em]
        &=J-J\\[0.25em]
        &=\Phi(J)
    \end{align*}
    (the third equality follows from~\ref{D-5}, and the fourth by Remark~\ref{rem:i=i+i-i}). Thus, since $\Phi$ is assumed to be injective, we have $\min(J)+(J-J)= J$. Therefore, every integrally closed ideal of $S$ is stable and hence $S$ is Arf.
\end{proof}

In \cite[Proposition 3.2]{IK24}, the authors proved that, if $I$ is a regular trace ideal of an Arf semi-local ring $R$, then $I$ is integrally closed. Moreover, in \cite[Theorem 3.5]{IK24}, it is proven that if $R$ is Arf and $I$ is a regular ideal of $R$, then being isomorphic to an integrally closed ideal, being a reflexive $R$-module and being isomorphic to a trace ideal, are equivalent conditions for $I$. The following proposition (together with Corollary~\ref{cor:Arf}) establishes that this equivalence characterizes Arf numerical semigroups rings. 

\begin{proposition}\label{prop:arf, ref, tr}
    For a numerical semigroup $S$, the following conditions are equivalent:
    \begin{enumerate}
        \item $S$ is Arf;
        \item an ideal $I$ of $S$ is reflexive if and only if $I$ is the translation of a trace ideal.
    \end{enumerate}
\end{proposition}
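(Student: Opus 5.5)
We prove both implications separately, using Corollary~\ref{cor:Arf} as the bridge between reflexivity and integral closedness.

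\emph{(1) implies (2).} Assume $S$ is Arf. Let $I$ be reflexive. By Corollary~\ref{cor:Arf}, $I$ is a translation of an integrally closed ideal. We may take this ideal to be $\widetilde I$, which contains $\operatorname{C}(S)$ because $\operatorname{c}(\widetilde I)=\operatorname{c}(S)$; the translate is pinned down by the conductor, as in Remark~\ref{rem:i=j}. By Remark~\ref{rem:int closed -> trace}, $\widetilde I$ is a trace ideal, so $I$ is a translation of a trace ideal.

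Conversely, let $I=x+J$ with $J=\operatorname{tr}(J)$. By Proposition~\ref{prop:cond tr(I)}, $\operatorname{C}(S)\subseteq J\subseteq S$, so $\operatorname{c}(J)=\operatorname{c}(S)$ and $\widetilde J=J$. We must show that $J$ is integrally closed; then Lemma~\ref{lem:ic-implies-ref} gives reflexivity of $J$, hence of $I$. Set $s=\min(J)$ and $\bar J=S(s)$, so $J\subseteq \bar J$. By the lemma characterizing trace ideals, $S-J=J-J$. By Lemma~\ref{lem:s-i=bar i-i}, which needs only $\widetilde J\subseteq S$, we have $\bar J-J=S-J=J-J$.

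Since $\bar J$ is stable (Arf), $\bar J-\bar J=-s+\bar J$. Now $\bar J+(\bar J-\bar J)\subseteq \bar J$, and we want to conclude $\bar J\subseteq J$. Take $t\in\bar J$. Then $t-s\in \bar J-\bar J$, and we need $t-s\in J-J$; this would give $t=s+(t-s)\in J$. To get it, it suffices to show $\bar J-\bar J\subseteq \bar J-J$, i.e.\ $(t-s)+J\subseteq \bar J$. This holds because $J\subseteq\bar J$ and $t-s\in\bar J-\bar J$. Hence $t-s\in \bar J-J=J-J$, so $J=\bar J$ is integrally closed.

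\emph{(2) implies (1).} By Corollary~\ref{cor:Arf}, it suffices to show that every reflexive ideal is a translate of an integrally closed ideal. Let $I$ be reflexive. By (2), $I=x+J$ with $J$ a trace ideal. Since $\operatorname{tr}$ is translation invariant, $\operatorname{tr}(I)=\operatorname{tr}(J)=J$; thus $J=\widetilde I$ and $J$ is a reflexive trace ideal. Alternatively, we can verify that integrally closed ideals are stable directly. For an integrally closed $J=S(s)$ with $s<\operatorname{c}(S)$, the argument in the proof of Proposition~\ref{prop:phi is inj} shows that $J':=s+(J-J)$ is reflexive, satisfies $\operatorname{C}(S)\subseteq J'\subseteq S$, and has $J'-J'=J-J$. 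By (2), $J'$ is a translate of a trace ideal, and that trace ideal, lying between $\operatorname{C}(S)$ and $S$, must equal $J'$ by Remark~\ref{rem:i=j}; so $S-J'=J'-J'=J-J=S-J$ (Lemma~\ref{lem:s-i=bar i-i} and Remark~\ref{rem:int closed -> trace}). Since $J$ and $J'$ are both reflexive, dualizing gives $J'=S-(S-J')=S-(S-J)=J$, i.e.\ $J=s+(J-J)$, so $J$ is stable. The case $s\ge\operatorname{c}(S)$ is handled as before. Hence $S$ is Arf.

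The main obstacle is the step in (1)$\Rightarrow$(2) showing that a trace ideal of an Arf semigroup is integrally closed, namely establishing $\bar J-J=S-J$ via Lemma~\ref{lem:s-i=bar i-i}. In (2)$\Rightarrow$(1), the key point is that equal duals force equality of reflexive ideals.
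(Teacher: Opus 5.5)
Your proof is correct and follows essentially the paper's route. In (1)$\Rightarrow$(2) you force a trace ideal $J$ to coincide with $S(\min J)$ using stability of the latter. In (2)$\Rightarrow$(1) you apply the hypothesis to the reflexive ideal $J'=s+(J-J)$ and identify it as a trace ideal by Remark~\ref{rem:i=j}. You differ from the paper only in the last step, where you conclude $J'=J$ by comparing duals ($S-J'=J'-J'=J-J=S-J$, then double-dualizing), rather than through the paper's direct inclusion $J\subseteq (J-J)+(S-(J-J))=s+(J-J)$.

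The opening lines of your (2)$\Rightarrow$(1), which reduce to Corollary~\ref{cor:Arf} and are then abandoned, should be deleted. In (1)$\Rightarrow$(2), the inclusion $\bar J-J\subseteq S-J$ given by~\ref{D-2} is all you need, so Lemma~\ref{lem:s-i=bar i-i} is not really the obstacle you describe.
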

\begin{proof}
\emph{(1) implies (2)}. 
Assume that $S$ is Arf. Let $I$ be a reflexive ideal of $S$. From Corollary~\ref{cor:Arf}, it follows that $\widetilde{I}$ is an integrally closed ideal of $S$, and then, by Remark~\ref{rem:int closed -> trace}, we have that $\widetilde{I}$ is a trace ideal. It follows that any reflexive ideal of $S$ is the translation of a trace ideal.

Let $I$ be an ideal of $S$ such that $I$ is a translation of a trace ideal. Then, there exist $t \in \mathbb{Z}$ and a trace ideal $H$ of $S$ such that $t+\widetilde{I}=H$. By Proposition~\ref{prop:i=tri} and Proposition~\ref{prop:cond tr(I)}, we have that $\operatorname{c}(H)=\operatorname{c}(\operatorname{tr}(H))=\operatorname{c}(S)=\operatorname{c}(\widetilde{I})$, hence $\widetilde{I}=H$ by Remark~\ref{rem:i=j}. This implies that $\widetilde{I}=\operatorname{tr}(\widetilde{I})$. Let $x=\min(\widetilde{I})=\min(\bar{I})$.  
From~\ref{D-2} and the fact that every integrally closed ideal is stable $S$ is Arf), we have
\[
\widetilde{I}=\widetilde{I}+(S-\widetilde{I})\supseteq \widetilde{I}+(\bar{I}-\widetilde{I}) \supseteq x+(\bar{I}-\bar{I})=\bar{I}
\]
which implies that $\widetilde{I}=\bar{I}$, because the inclusion $\widetilde{I} \subseteq \bar{I}$ always holds. Thus $\widetilde{I}=\widetilde{I}^v$ and, in particular, we have that $I$ is reflexive.

\noindent \emph{(2) implies (1)}. Notice that for any integral ideal $I$ of $S$, we have $I+(I-I)=I \subseteq S$ from Remark~\ref{rem:i=i+i-i}, therefore $I \subseteq S-(I-I)$ by~\ref{D-6}. Let $J$ be an integrally closed ideal of $S$ and $x=\min(J)$. As in the proof of Proposition~\ref{prop:arf, ref, tr}, we may assume that $x<\operatorname{c}(S)$, and consequently $\operatorname{c}(J)=\operatorname{c}(S)$. The ideal $x+(J-J)$ is integral by definition of $J-J$ and reflexive (by Lemma~\ref{lem: I-I}). Also, by Remark~\ref{rem:C x+(I-I)}, $\operatorname{c}(J)=\operatorname{c}(x+(J-J))$, and as $J$ is integrally closed, this means that $\operatorname{c}(x+(J-J))=\operatorname{c}(S)$.
Then, by the assumption, it follows that $x+(J-J)$ is the translation of a trace ideal $H$, say $H=t+x+(J-J)$ for some $t \in \mathbb{Z}$. By Proposition~\ref{prop:cond tr(I)} and Remark~\ref{rem:C x+(I-I)}, $\operatorname{c}(\operatorname{tr}(H))=\operatorname{c}(H)=\operatorname{c}(S)=\operatorname{c}(x+(J-J))$ and therefore $t=0$ by Remark~\ref{rem:i=j}. This implies that $x+(J-J)$ is itself a trace ideal, hence by~\ref{D-1},
\[
x+(J-J)=(x+(J-J))+(S-(x+(J-J)))=(J-J)+(S-(J-J)).
\]
Observe that $J+(J-J)\subseteq J\subseteq S$, and by \ref{D-6}, $J\subseteq S-(J-J)$.
Thus, we have $J=0+J \subseteq (J-J)+(S-(J-J))=x+(J-J)$, which means that $J$ is stable. Since $J$ was arbitrarily chosen among integrally closed ideals, it follows that $S$ is Arf.
\end{proof} 

Notice that in the proof of the second implication we only used the fact that reflexive ideals are trace (and not the converse), thus the previous proposition proves that a version of \cite[Corollary 7.5]{HL22} holds in the case of numerical semigroup rings.

\section*{Acknowledgements}

The first author is supported by the Polish OPUS grant 28 NCN-2024/55/B/ST1/01437 - The structure of perfect ideals and linkage. The second author is supported by the grant PRIN 2022 DD n. 104/2022 e DD n. 1401/2024 del MUR (code 2022K48YYP). The third author acknowledges financial support from grant PID2022-138906NB-C21, funded by MICIU/AEI/10.13039/501100011033 and by ERDF ``A way of making Europe''.

\section*{Declaration}
On behalf of all authors, the corresponding author states that there is no conflict of interest.

\end{document}